\def\vvs{\vskip 1cm}
\def\vs{\vskip 0.30cm}
\def\vss{\vskip 0.05cm}
\def\mhs{\hspace{-0.2cm}}
\def\mhss{\hspace{-0.05cm}}
\def\mvs{\vspace{-0.2cm}}
\def\medmu{\medmuskip=0mu}
\def\al{\upalpha}
\def\be{\upbeta}
\def\ga{\upgamma}
\def\De{\Updelta}
\def\ep{\upvarepsilon}
\def\eps{\upepsilon}
\def\la{\uplambda}
\def\si{\upsigma}
\def\longto{\longrightarrow}
\def\Sum{\mathlarger{\sum}}
\def\into{\hookrightarrow}
\def\onto{\twoheadrightarrow}
\newcommand{\dc}{\vartriangle}
\newcommand{\inv}{^{-1}}
\def\ie{that is }
\def\cf{\textit{cf.\ }}
\newcommand{\deff}[1]{\textbf{#1}}
\newcommand{\A}{\mathcal{A}}
\newcommand{\B}{\mathcal{B}}
\newcommand{\CC}{\mathbb{C}}
\newcommand{\dd}{\mathbbm{d}}
\newcommand{\F}{\mathcal{F}}
\newcommand{\Fm}{\mathcal{F}_{\textrm{mult}}}
\newcommand{\N}{\mathbb{N}}
\newcommand{\one}{\mathbbm{1}}
\newcommand{\PP}{\mathscr{P}}
\newcommand{\Q}{\mathbb{Q}}
\newcommand{\R}{\mathbb{R}}
\newcommand{\Z}{\mathbb{Z}}
\newcommand{\an}{A_n}
\newcommand{\sn}{S_n}
\newcommand{\snsn}{\sn \times \sn}
\newcommand{\slzz}{\mathrm{SL}_2(\Z)}
\newcommand{\glzz}{\mathrm{GL}_2(\Z)}
\newcommand{\gl}{\mathrm{GL}}
\newcommand{\mmod}{\textrm{ mod }}
\newcommand{\type}[1]{\mathsf{type}(#1)}
\newcommand{\flag}[1]{\mathsf{flag}(#1)}
\newcommand{\graph}[1]{\mathsf{Graph}(#1)}
\newcommand{\supp}{\mathrm{supp}}
\def\fix{\mathrm{fix}}
\def\id{\mathrm{id}}
\def\tr{\mathrm{tr}}
\newcommand{\group}[1]{\left\langle #1 \right\rangle}
\newcommand{\set}[1]{\left\{ #1\right\}}
\newcommand{\sett}[2]{\left\{ #1 \;\,\left|\;\; #2\right. \right\}}
\newcommand{\sign}[1]{\mathrm{sign}(#1)}
\newcommand{\drob}[2]{{\,\Large \sfrac{#1}{#2}}}
\theoremstyle{plain}
\definecolor{shadethmcolor}{cmyk}{.02,.02,0,0} % The shading color of the background
\definecolor{shaderulecolor}{cmyk}{.75,.75,0,.5} % The shading color of the border of the shaded box
\theoremstyle{remark}
\theoremstyle{definition}
\newtheorem*{defi}{Definition}
\def\onlyif{\noindent\tikz{\node[below,draw,rounded corners] at (0,0) {$\Longrightarrow$};} }
\def\ifonly{\noindent\tikz{\node[below,draw,rounded corners] at (0,0) {$\Longleftarrow$};} }
\newdimen\arrowsize 
\titleformat{\section}{\Large\bfseries\scshape}{\thesection}{0.5cm}{}
\titleformat{\subsection}{\Large\bfseries\color{Mahogany}}{\large\thesubsection}{0.4cm}{}
\titlespacing{\section}{0cm}{1.5cm}{0.5cm}
\titlespacing{\subsection}{0cm}{1cm}{0.5cm}
\begin{document}
%\maketitle

\thispagestyle{empty}
%\begin{center} 
\phantom{hi}
\vspace{4cm}
\renewcommand\bcStyleTitre[1]{\LARGE\textsc{#1}}
\begin{bclogo}[couleur=blue!20,arrondi=0.1,ombre=true,logo=$\phantom{123}$,barre=none]
{The probability of generating the symmetric \newline group with a commutator condition}
\end{bclogo}

\vspace{5cm}
\quad\textbf{\Large David Zmiaikou\footnote{Research partially supported by the Balzan Research Project of J. Palis}}
\vs
\quad Department of Mathematics, University Paris-Sud, 91400 Orsay, France 

\vs\vvs
\quad August 31, 2012

%%%%%%%%%%%%%%%%%%%%%%%%%%%%%%%%%%%%%%%%%%%%%%
%\newpage
%\thispagestyle{empty}
%\phantom{hi}
\newpage
\thispagestyle{empty}
\begin{abstract}
Let $\B(n)$ be the set of pairs of permutations from the symmetric group of degree $n$ with a $3$-cycle commutator, and let $\A(n)$ be the set of those pairs which generate the symmetric or the alternating group of degree $n$.   
     We find effective formulas for calculating the cardinalities of both sets. More precisely, we show that $\#\B(n)/n!$ is a discrete convolution of the partition function and a linear combination of divisor functions, while $\#\A(n)/n!$ is the product of a polynomial and Jordan's totient function. In particular, it follows that the probability that a pair of random permutations with a $3$-cycle commutator generates the symmetric or the alternating group of degree $n$ tends to zero as $n$ tends to infinity, which makes a contrast with Dixon's classical result.

Key elements of our proofs are Jordan's theorem from the 19th century, a formula by Ramanujan from the 20th century and a technique of square-tiled surfaces developed by French mathematicians Leli\`{e}vre and Royer \cite{lelievre-royer} in the beginning of the 21st century. This paper uses and highlights elegant connections between algebra, geometry, and number theory.
\end{abstract}

%\vspace{-1cm}
\tableofcontents

%%%%%%%%%%%%%%%%%%%%%%%%%%%%%%%%%%%%%%%%%%%%%%
\newpage
\setcounter{page}{2}
\addcontentsline{toc}{section}{Notation}
\section*{Notation}
\vspace{-.9cm}
\begin{longtable}{lp{12cm}}
$\N=\{1,2,3,\hdots \}$ & set of natural numbers (positive integers)\\
$\N_{\infty} = \N \cup \set{\infty}$ & extended set of positive integers\\
$\Z,\; \Q$ & sets of integer and rational numbers\\
$\R,\; \R^2$ & real line and real plane\\
$\left[a, b\right[$\,,\; $\left]a,b\right]$ & semi-open intervals of real numbers from $a$ to $b$\\
$C_{n}^{k} = \frac{n!}{k!(n-k)!}$ & binomial coefficient\\
$a\land b$ & greatest common divisor of $a$ and $b$\\
$a\mmod b$ & remainder (residue) after the division of $a$ by $b$\\$\lsem a, b\rsem$ & set of integers in an interval $[a,b]$\\
$\liminf a_{n}$,\; $\limsup a_{n}$ & lower and upper limits of a sequence $(a_{n})$\\
$\tr(A)$,\; $\det(A)$ & trace and determinant of a matrix $A$\\
$\slzz$ & group of integer $2\times 2$  matrices with determinant 1\\
$\gl_{d}(\CC)$ & group of complex $d\times d$ matrices  with nonzero determinant\\
$\upchi_{\rho}$ & character of a representation $\rho$\\
$f\cdot g$ & product of two functions, $(f\cdot g)(n) = f(n) g(n)$\\
$f\dc g$ & discrete convolution of two functions, $(f\dc g)(n) = \sum_{k=1}^{n-1} f(k) g(n-k)$\\
$f\ast g$ & Dirichlet convolution of two functions, $(f\ast g)(n) = \sum_{d\mid n} f(d) g(n/d)$\\
$\F$ & set of all arithmetic functions\\ 
$\F^{\ast}$ & set of arithmetic functions different from zero at $1$\\
$\Fm$ & set of multiplicative arithmetic functions\\
$\ep(n)$ & trivial function, $\ep(1) = 1$ and $\ep(n) = 0$  for $n>1$\\
$\one(n)$ & constant function, $\one(n)\equiv 1$\\
$\id(n)$ & identical function, $\id(n)=n$\\
$\id_{k}(n)$ & power function of order $k$, $\id_{k}(n)=n^{k}$\\
$\mu(n)$ & M\"{o}bius function\\
$\tau(n)$ & number of divisors  of $n$\\
$\si(n)$ & sum of divisors of $n$\\
$\si_{k}(n)$ & sigma-function of order $k$ (sum of $k^{\textrm{th}}$ powers of the divisors of $n$)\\
$\phi(n)$ & Euler's totient function (number of integers from $1$ to $n$  coprime with \nolinebreak $n$)\\
$J_{k}(n)$ & Jordan's totient function of order $k>0$\\

$P(n)$ & number of partitions of a positive integer $n$ (partition function)\\
$\PP(n)$ & set of partitions of $n$\\
$\#M$ or $|M|$ & cardinality of a set $M$\\
$M\into L$ & injection from a set $M$ to a set $L$\\
$M\onto L$ & surjection from a set $M$ to a set $L$\\
$M\times L$ & Cartesian product of sets $M$ and $L$\\
$s\inv$ & inverse permutation to $s$\\
$s\circ t$ or $s t$ & composition of two permutations $s$ and $t$\\
$[s, t] = s t s\inv t\inv$ & commutator of two permutations $s$ and $t$\\
$Alt(M)$,\; $Sym(M)$ & alternating and symmetric groups of a set $M$\\
$A_n$, $S_n$ & alternating and symmetric groups of the set $\set{1,2,\ldots,n}$\\
$\group{s,t}$ & group generated by $s$ and $t$\\
$G_x$ & stabilizer of a point $x$ for an action of a group $G$\\
$\supp(s)$,\; $\fix(s)$ & support of a permutation $s$ and set of points fixed by $s$\\
$\graph{s_{1},\ldots,s_{g}}$ & graph of permutations $s_{1},\ldots,s_{g}$ \\
$\type{s}$ & cycle type of a permutation $s$\\
$\flag{s}$ & flag of a permutationи $s$\\
$\dd_{s}: \lsem 1, n\rsem^{2} \to \N_{\infty}$ & $s$-distance on the set $\lsem 1, n\rsem$\,, where $s\in\sn$\\

$O_{s,t}$ & square-tiled surface corresponding to a pair of permutations $(s, t)$\\
\end{longtable}

\vvs
%%%%%%%%%%%%%%%%%%%%%%%%%%%%%%%%%%%%%%%%%%%%%%
\begin{comment}
	\lhead[<lh-even>]{<lh-odd>} \chead[<ch-even>]{<ch-odd>} \rhead[<rh-even>]{<rh-odd>}
	\lfoot[<lf-even>]{<lf-odd>}	 \cfoot[<cf-even>]{<cf-odd>} \rfoot[<rf-even>]{<rf-odd>}
	\thepage	number of the current page
	\leftmark	current chapter name printed like "CHAPTER 3. THIS IS THE CHAPTER TITLE"
	\rightmark	current section name printed like "1.6. THIS IS THE SECTION TITLE"
	\chaptername	the name chapter in the current language. If this is English, it will display "Chapter"
	\thechapter	current chapter number
	\thesection	current section number
\end{comment}
\pagestyle{fancy}
\lhead[\footnotesize\scshape \leftmark]{}
\rhead[]{\slshape \rightmark}

\addcontentsline{toc}{section}{Introduction}
\section*{Introduction}
\noindent In 1892, Netto conjectured that almost all pairs of permutations will generate the symmetric or the alternating group. 76 years passed before the theorem was finally proved by Dixon. This paper investigates a refinement of Netto's conjecture. 

Denote by $\B(n)$ the set of pairs of permutations from $\sn$ with a $3$-cycle commutator, and let $\A(n)$ be the set of those pairs which generate the symmetric or the alternating group of degree $n$:
$$\B(n) = \sett{(s,t)\in\snsn}{[s,t] \textrm{ is a } 3\textrm{-cycle}},$$
$$\A(n) = \sett{(s,t)\in\B(n)}{\group{s,t}=\an \textrm{ or } \sn}.$$
We find the cardinalities of these sets, and show that the probability that a pair of permutations with a $3$-cycle commutator generates $\an$ or $\sn$ tends to zero as $n\to\infty$.

\begin{theor}\label{theor:A} 
\emph{(\cf Theorems \ref{th:B}, \ref{th:A} and Corollary \ref{corr:A:proba})} 

\noindent The following relations hold for any $n>2$:
\begin{equation*}%\label{eq:}
   \#\B(n) = \frac{3}{8} \bigg(\sum_{k=1}^{n}\si_{3}(k) P(n-k) - 2\sum_{k=1}^{n} k\si_{1}(k) P(n-k) + n P(n)\bigg) \, n!\,,
\end{equation*}
\begin{equation*}%\label{eq:}
   \#\A(n) = \frac{3}{8} (n - 2)J_{2}(n) \, n!\,,
\end{equation*}
where $\si_{\ell}(n)$ is the sum of the $\ell^{\textrm{th}}$ powers of the positive divisors of $n$,\, $J_{\ell}(n)$ is Jordan's totient function of order $\ell$, and $P(n)$ is the partition function.

In particular, the probability $\drob{\#\A(n)}{\#\B(n)}$ tends to zero as $n$ tends to infinity.
\end{theor}

\noindent  We also consider intermediate sets
$$\B_{1}(n) = \sett{(s,t)\in\snsn}{[s,t] = 3\textrm{-cycle},\; s = n\textrm{-cycle}},$$
$$\A_{1}(n) = \sett{(s,t)\in\B_{1}(n)}{\group{s,t}=\an \textrm{ or } \sn},$$
$$\B_{2}(n) = \sett{(s,t)\in\snsn}{[s,t] =3\textrm{-cycle},\; s = \textrm{arbitrary cycle}},$$
$$\A_{2}(n) = \sett{(s,t)\in\B_{2}(n)}{\group{s,t}=\an \textrm{ or } \sn}$$
and prove the following theorem:

\begin{theor}\label{theor:B}  
\emph{(\cf Propositions \ref{prop:B1}, \ref{prop:A1}, \ref{prop:B2}, \ref{prop:A2} and Corollaries \ref{corr:limp1}, \ref{corr:limp2})}

\noindent For any natural $n>2$, one has the formulas:
\renewcommand{\arraystretch}{2} % to increase the vertical space between rows, default is 1
\begin{equation*}%\label{eq:}
\begin{array}{rl} 
\#\B_{1}(n) = C_{n}^{3}\, n!\,, 
&\quad  \#\B_{2}(n) = \dfrac{1}{24}(n-1)(n-2)(n^{2}+5n+12)\, n!\,,\\
\#\A_{1}(n) = \dfrac{n}{6}\bigg( J_{2}(n) - 3 J_{1}(n) \bigg)\, n!\,,  
&\quad \#\A_{2}(n) = \dfrac{n}{6}\bigg( J_{2}(n) - 3 J_{1}(n) \bigg)\, n! \;+\; \dfrac{(n+1)(n-2)}{2}\, n!\,.
\end{array}
\end{equation*}
In particular, for the probabilities $p_{1}(n)=\drob{\#\A_{1}(n)}{\#\B_{1}(n)}$\; and\; $p_{2}(n)=\drob{\#\A_{2}(n)}{\#\B_{2}(n)}$ we obtain the lower and the upper limits:
\renewcommand{\arraystretch}{1.1} % to increase the vertical space between rows, default is 1
\begin{equation*}%\label{eq:}
\begin{array}{rclrcl} 
 \liminf \,p_{1}(n) &=& \drob{6}{\pi^{2}},  &\quad  \liminf \;n\cdot p_{2}(n) &=& \drob{24}{\pi^{2}},\\ 
 \limsup \,p_{1}(n) &=&1  &\quad  \limsup \;n\cdot p_{2}(n) &=& 4. 
\end{array}
\end{equation*}
\renewcommand{\arraystretch}{1} % to increase the vertical space between rows, default is 1
\end{theor}

%%%%%%%%%%%%%%%%%%%%%%%%%%%%%%%%%%%%%%%%%%%%%%
\section{General remarks}
Denote by $\lsem a, b\rsem$ the set of integer from $a$ to $b$. Introduce the extended set of positive integers $\N_{\infty} = \N \cup \set{\infty}$.
Recall that any permutation setа $\lsem 1, n\rsem$ decomposes into a product of disjoint cycles (\cf Lemma \ref{prop:cycles}). 
\begin{defi}%\label{} 
Let $s\in \sn$ be a permutation. The \deff{$s$-distance} on the set $\lsem 1, n\rsem$ is defined to be the function $\dd_{s}: \lsem 1, n\rsem^{2} \to \N_{\infty}$ such that
\begin{equation}%\label{eq:}
   \dd_{s}(x,y) = 
\begin{cases}%\label{} 
% 0, & \textrm{if } x = y; \\
\min\sett{d\in\N}{s^{d}(x)=y}, & \textrm{if } x \textrm{ and } y \textrm{ lie in the same cycle of } s\,;\\
\infty, & \textrm{otherwise. }  
\end{cases}
\end{equation}
In particular, the $s$-distance $\dd_{s}(x,x)$ is equal to the length of the cycle containing $x$.
\end{defi}
For instance, for the permutation $s = (1\;2\;3\;4)(7\;8\;9)$ we have 
$$\dd_{s}(1,4)=3, \quad \dd_{s}(1,5)=\dd_{s}(1,8)=\infty, \quad \dd_{s}(6,6)=1, \quad \dd_{s}(7,8)=1, \quad \dd_{s}(9,9)=3.$$
Note that the function $\dd_{s}$ is not symmetric. In the example above, $\dd_{s}(1,4)=3$ but $\dd_{s}(4,1)=1$. Evidently, two integers $x$ and $y$ lie in the same cycle of $s$ if and only if 
$$\dd_{s}(x,y) + \dd_{s}(y,x) = \dd_{s}(x,x).$$

\begin{lemm}\label{lemm:twotypes} 
Suppose that the commutator of two permutations $s$ and $t$ from $\sn$ is a $3$-cycle,\; $[s,t]=(z\;y\;x)$. Then one of the following holds:
\begin{itemize}
   \item[\emph{(a)}]  all three numbers $x, y, z$ lie in the same cycle of $s$ in the given order, \ie  $$\dd_{s}(x,y) + \dd_{s}(y,z) + \dd_{s}(z,x) = \dd_{s}(x,x);$$ 
   %where $\ell$ -- the length cycleа, содержащего эти number;
   \item[\emph{(b)}]  two numbers \emph{(}say $x$ and $y$\emph{)} lie in the same cycle of $s$, and the third one \emph{(}$z$\emph{)} lies in another cycle such that \; $\dd_{s}(z,z)=\dd_{s}(y,x)$.
\end{itemize}
\end{lemm}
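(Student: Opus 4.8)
The plan is to reduce the statement to a question about how left-multiplication by a fixed $3$-cycle rearranges the cycles of $s$. Writing $c=[s,t]=(z\;y\;x)$, I would first rewrite the commutator identity as $s\inv c = t\,s\inv t\inv$. The right-hand side is conjugate to $s\inv$, so $s\inv c$ has the same cycle type as $s\inv$, and passing to inverses, $c\inv s$ has the same cycle type as $s$. Thus, setting $\tilde c = c\inv = (x\;y\;z)$, the whole problem becomes: \emph{describe the configurations of $\set{x,y,z}$ inside the cycles of $s$ for which $\tilde c\, s$ has the same cycle type as $s$.}

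Next I would record the purely local effect of this multiplication. Since $\tilde c$ moves only $x,y,z$, the permutation $\tilde c\, s$ coincides with $s$ everywhere except at the three predecessors $s\inv(x),s\inv(y),s\inv(z)$, where one checks directly that $\tilde c\, s$ sends $s\inv(x)\mapsto y$, $s\inv(y)\mapsto z$ and $s\inv(z)\mapsto x$ (instead of $\mapsto x,y,z$ respectively). In other words, $\tilde c\, s$ is obtained from $s$ by cutting the three arcs ending at $x,y,z$ and reconnecting their loose ends cyclically, so everything reduces to bookkeeping of arcs, organized by how $x,y,z$ are distributed among the cycles of $s$. A convenient coarse invariant is the number of cycles: since $\tilde c$ is a product of two transpositions, $\tilde c\, s$ and $s$ differ in their number of cycles by $-2,0$ or $+2$, and equality of cycle types forces this difference to be $0$.

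I would then run the case analysis. If $x,y,z$ lie in three distinct cycles, the reconnection merges them into a single cycle, dropping the cycle count by $2$: excluded. If all three lie in one cycle, tracing the reconnected arcs shows two possibilities according to their cyclic order along $s$: when the order is $x,y,z$ (that is, $\dd_{s}(x,y)+\dd_{s}(y,z)+\dd_{s}(z,x)=\dd_{s}(x,x)$) the cycle is rebuilt into a single cycle of the same length, so the type is preserved, which is alternative (a); when the order is $x,z,y$ the cycle splits into three, raising the count by $2$, which is excluded. Finally, if exactly two of the points, say $x$ and $y$, share a cycle $P$ while $z$ lies in another cycle $Q$, the reconnection isolates as a new cycle the arc of $P$ running along $s$ from $y$ up to the predecessor $s\inv(x)$ of $x$ (of length $\dd_{s}(y,x)$), while the complementary arc of $P$ fuses with all of $Q$; comparing the two length multisets shows the type is preserved precisely when $\dd_{s}(z,z)=\dd_{s}(y,x)$, which is alternative (b). The three choices of which pair cohabits are interchanged by the cyclic symmetry $(z\;y\;x)=(y\;x\;z)=(x\;z\;y)$, so it suffices to treat one of them.

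The main obstacle I anticipate is organizational rather than conceptual: getting the direction conventions right (predecessors versus successors, and the precise cyclic order that distinguishes (a) from the excluded split), and verifying that the arc surgery remains valid in the degenerate situations, namely when some predecessor equals a marked point, when two marked points are adjacent under $s$, or when $z$ is a fixed point of $s$ so that $Q$ has length $1$. Each of these is merely a special value of an arc length rather than a genuinely new case, so I would absorb them uniformly into the arc framework instead of arguing them separately.
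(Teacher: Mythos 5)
Your proposal is correct and takes essentially the same route as the paper: both reduce the commutator relation to the statement that $(x\;y\;z)\,s$ is conjugate to $s$, hence of the same cycle type, and then classify the possible placements of $x,y,z$ among the cycles of $s$ by checking which configurations preserve that type. Your cut-and-reconnect bookkeeping at the predecessors $s\inv(x), s\inv(y), s\inv(z)$, together with the cycle-count parity check, is just a repackaged form of the paper's explicit cycle products, ending in the same multiset comparison $\set{a+b,\,c}$ versus $\set{b,\,a+c}$ that forces $\dd_{s}(z,z)=\dd_{s}(y,x)$ in case (b).
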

\begin{proof}
The equality $s t s\inv t\inv=(z\;y\;x)$ can be re-written as 
\begin{equation}\label{eq:twotypes:conjugate}
   (x\;y\;z) s = t s t\inv
\end{equation}
when one multiplies the former equality by the permutation $(t s\inv t\inv)\inv = t s t\inv$ from the right and by $(z\;y\;x)\inv = (x\;y\;z)$ from the left. From this relation follows that the permutations $(x\;y\;z) s$ and $s$ are conjugate -- they have the same cycle type (\cf the section \ref{sec:permutations} of Appendix). 

Suppose that the numbers $x=x_{1}$, $y=y_{1}$, $z=z_{1}$ lie in distinct cycles of $s$, \ie  that $s$ is the following product of permutation with disjoint supports: 
$$s=(x_{1}\;\ldots\;x_{a})(y_{1}\;\ldots\;y_{b})(z_{1}\;\ldots\;z_{c})\, u\, ,$$ %\qquad \textrm{where } x_{1}=x,\; y_{1}=y,\; z_{1}=z.$$
Then the permutation 
$$(x_{1}\;y_{1}\;z_{1}) s = (x_{1}\;\ldots\;x_{a}\;y_{1}\;\ldots\;y_{b}\;z_{1}\;\ldots\;z_{c})\, u$$
is not conjugate to $s$, which contradicts the condition \eqref{eq:twotypes:conjugate}. Thus, either all three numbers $x, y, z$ lie in the same cycle of $s$, or two of them lie in one cycle of the permutation $s$ and the third number -- in another cycle.

\vs Consider the case (a). Up to a cyclic permutation, the set of three elements can be ordered in two ways: $x, y, z$ or $z, y, x$. If the numbers $x, y, z$ lie in some cycleе $s$ in the inverse order, \ie if
$$s = (z_{1}\;\ldots\;z_{c}\;y_{1}\;\ldots\;y_{b}\;x_{1}\;\ldots\;x_{a})\, u\, ,$$
then the permutation  
$$(x_{1}\;y_{1}\;z_{1}) s = (x_{1}\;\ldots\;x_{a})(y_{1}\;\ldots\;y_{b})(z_{1}\;\ldots\;z_{c})\, u$$
is not conjugate to $s$, which is false. Therefore, the numbers $x, y, z$ must be in the given order. 

\vs Consider the case (b), where $s$ can be presented as  the following product of permutations with disjoint supports: 
$$s = (z_{1}\;\ldots\;z_{c})(y_{1}\;\ldots\;y_{b}\;x_{1}\;\ldots\;x_{a})\, u\, .$$
Then, since the permutation 
$$(x_{1}\;y_{1}\;z_{1}) s = (y_{1}\;\ldots\;y_{b})(z_{1}\;\ldots\;z_{c}\;x_{1}\;\ldots\;x_{a})\, u$$
must be conjugate to $s$, we obtain $c = b$,  \ie \; $\dd_{s}(z,z)=\dd_{s}(y,x)$.
\end{proof}

Consider a pair of permutations $(s,t)\in\snsn$ such that $[s,t]=(z_{1}\;y_{1}\;x_{1})$. Suppose that the permutation group $G=\group{s,t}$ is transitive. Then by means of Lemma \ref{lemm:twotypes}, one can get a simple description of the \emph{connected} square-tiled surface $O(s,t)$ defined in the section \ref{sec:origamis} of Appendix. It was first obtained by Zorich (\cf the monograph \cite{zorich}). More precisely, there can be only two following situations:
\begin{itemize}
\item[(a)] The permutation $s$ is a product of $k$ disjoint cycles of length $\drob{n}{k}$, 
$$s=u_{1} u_{2}\cdots u_{k}\, ,$$
moreover, one of the cycles, say $u_{1}$, is presentable as $u_{1}=(x_{1}\;\ldots\;x_{a}\;y_{1}\;\ldots\;y_{b}\;z_{1}\;\ldots\;z_{c})$ so that
$$(x_{1}\;y_{1}\;z_{1}) s = (x_{1}\;\ldots\;x_{a}\;z_{1}\;\ldots\;z_{c}\;y_{1}\;\ldots\;y_{b})\, u_{2}\cdots u_{k}.$$
The permutation $t$ is such that, when conjugating $s$, it sends the cycle $u_{i}$ to the cycle $u_{i+1}$ for any $i\in\lsem 1, k-1\rsem$, and the cycle $u_{k}$ is sent to $(x_{1}\;\ldots\;x_{a}\;z_{1}\;\ldots\;z_{c}\;y_{1}\;\ldots\;y_{b})$.
\begin{figure}[htbp]
   \begin{center}
   \begin{tikzpicture}[scale=.6, >=latex,shorten >=1pt,shorten <=1pt,line width=1pt,bend angle=20, 
   	circ/.style={circle,inner sep=2pt,draw=black!40,rounded corners, text centered},
        tblue/.style={dashed,blue,rounded corners}]
      \draw[step=1] (0,0) grid +(11,4);
      \draw (0,0)+(.5,.5) node {$x_{1}$};
      \draw (1,0)+(.5,.5) node {$\ldots$};
      \draw (2,0)+(.5,.5) node {$x_{a}$};
      \draw (3,0)+(.5,.5) node {$y_{1}$};
      \draw (5,0)+(.5,.5) node {$\ldots$};
      \draw (7,0)+(.5,.5) node {$y_{b}$};
      \draw (8,0)+(.5,.5) node {$z_{1}$};
      \draw (9,0)+(.5,.5) node {$\ldots$};
      \draw (10,0)+(.5,.5) node {$z_{c}$};
      
      \draw[line width=2pt,blue] (3,0) -- ++(0,4) (8,0) -- ++(0,4);
      
      \draw[tblue] (3,4) -- ++(0,.5) -- ++(5,0) -- ++(0,-.5);
      \draw[tblue] (6,0) -- ++(0,-.5) -- ++(5,0) -- ++(0,.5);
      \draw[tblue] (8,4) -- ++(0,.5) -- ++(3,0) -- ++(0,-.5);
      \draw[tblue] (3,0) -- ++(0,-.5) -- ++(3,0) -- ++(0,.5);
      \draw[->,tblue] (5.5,4.5) -- ++(0,2) -- ++(8,0) -- ++(0,-8) -- ++(-5,0) -- ++(0,1); 
      \draw[->,tblue] (9.5,4.5) -- ++(0,1) -- ++(2.5,0) -- ++(0,-8) -- ++(-7.5,0) -- ++(0,2); 

      \begin{scope}[scale=0.6,xshift=-10cm,yshift=-3cm]
      \draw[->,green!60!black] (0,0) -- (3,0);
      \draw[->,dashed,blue] (0,0) -- (0,3);
      \draw (3,0) node[below] {$s$};
      \draw (0,3) node[left] {$t$};
      \end{scope}
      
      \draw (18,0) node{};
      
      \draw[thin] (0,0) -- ++(-.7,0) (0,4) -- ++(-.7,0);
      \draw[<->,thin,>=arcs] (-.5,0) -- node[left]{$k$} ++(0,4);
   \end{tikzpicture}
   \caption{One-cylinder origami with parameters $(k,a,b,c)$.}
   \label{fig:onecylinder}
   \end{center} \vspace{-0.3cm}
\end{figure}
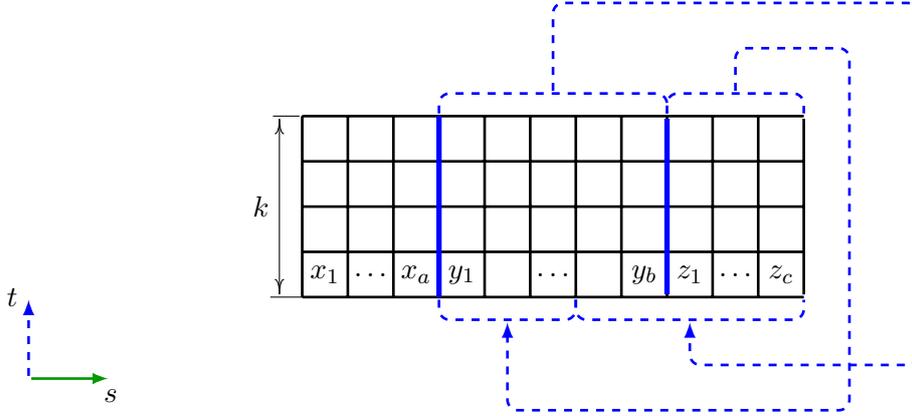

The square-tiled surface $O(s,t)$ is illustrated in Figure \ref{fig:onecylinder}, where any two unmarked opposite  sides are identified. It is called the \deff{one-cylinder origami with parameters $(k,a,b,c)$}.

\item[(b)] The permutation $s$ is a product of $a$ cycles of length $k$ and $b$ cycles of length $\ell$, 
$$s=u_{1}\cdots u_{a}\, v_{1}\cdots v_{b}\, ,\qquad \textrm{where }\; k < \ell\; \textrm{ and }\; a k + b \ell = n.$$
The cycle $u_{1}$ contains the point $z_{1}$, and the cycle $v_{1}$ contains the points $y_{1}$ and $x_{1}$. Moreover, the $s$-distance from $y_{1}$ to $x_{1}$ is equal to the length of the cycle $u_{1}$:
$$u_{1}=(z_{1}\;\ldots\;z_{k})\qquad\textrm{и}\qquad v_{1} = (y_{1}\;\ldots\;y_{k}\;x_{1}\;\ldots\;x_{k-\ell}),$$
so that 
$$(x_{1}\;y_{1}\;z_{1}) s = (y_{1}\;\ldots\;y_{k})\, u_{2}\cdots u_{a}\; (z_{1}\;\ldots\;z_{k}\; x_{1}\;\ldots\;x_{k-\ell})\, v_{2}\cdots v_{b}\, .$$
The permutation $t$ conjugates $s$ in the following way:
\renewcommand{\labelitemi}{$\bullet$}
\begin{itemize}
   \item[$\bullet$] the cycle $u_{i}$ is sent to the cycle $u_{i+1}$ for any $i\in\lsem 1, a-1\rsem$, 
   \item[$\bullet$] the cycle $v_{j}$ is sent to the cycle $v_{j+1}$ for any $j\in\lsem 1, b-1\rsem$, 
   \item[$\bullet$] the cycle $u_{a}$ is sent to the cycle $(y_{1}\;\ldots\;y_{k})$,
   \item[$\bullet$] the cycle $v_{b}$ is sent to the cycle $(z_{1}\;\ldots\;z_{k}\; x_{1}\;\ldots\;x_{k-\ell})$. 
\end{itemize}

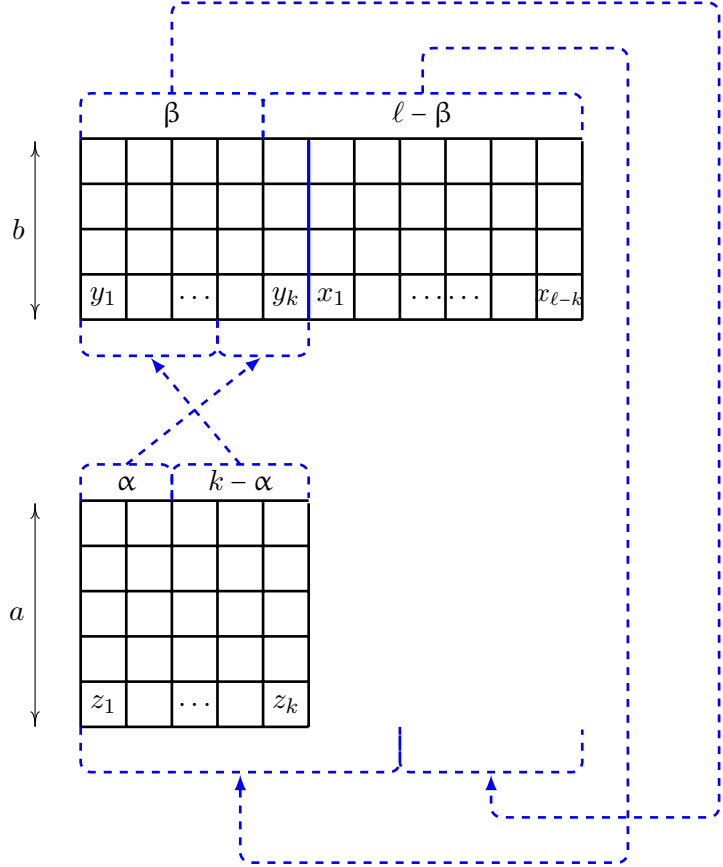
\begin{figure}[htbp]
   \begin{center}
   \begin{tikzpicture}[scale=.6, >=latex,shorten >=1pt,line width=1pt,bend angle=20,
        circ/.style={circle,inner sep=2pt,draw=black!40,rounded corners, text centered},
        tblue/.style={dashed,blue,rounded corners}]
      \draw[step=1] (3,0) grid +(11,4);
      \draw (3,0)+(.5,.5) node {$y_{1}$};
      \draw (5,0)+(.5,.5) node {$\ldots$};
      \draw (7,0)+(.5,.5) node {$y_{k}$};
      \draw (8,0)+(.5,.5) node {$x_{1}$};
      \draw (11,0)+(0,.5) node {$\ldots\ldots$};
      \draw (13,0)+(.5,.5) node {\small $x_{\ell-k}$};
      
      \draw[step=1] (3,-9) grid +(5,5);
      \draw (3,-9)+(.5,.5) node {$z_{1}$};
      \draw (5,-9)+(.5,.5) node {$\ldots$};
      \draw (7,-9)+(.5,.5) node {$z_{k}$};

      \draw[very thick,blue] (8,0) -- ++(0,4);
      
      \draw[tblue] (3,4) -- ++(0,1) -- ++(4,0) -- ++(0,-1);
      \draw[tblue] (10,-9) -- ++(0,-1) -- ++(4,0) -- ++(0,1);
      \draw[tblue] (7,4) -- ++(0,1) -- ++(7,0) -- ++(0,-1);
      \draw[tblue] (3,-9) -- ++(0,-1) -- ++(7,0) -- ++(0,1);
      \draw[->,tblue] (5,5) -- ++(0,2) -- ++(12,0) -- ++(0,-18) -- ++(-5,0) -- ++(0,1); 
      \draw[->,tblue] (10.5,5) -- ++(0,1) -- ++(4.5,0) -- ++(0,-18) -- ++(-8.5,0) -- ++(0,2); 

      \draw[tblue] (3,-4) -- ++(0,.8) -- ++(2,0) -- ++(0,-.8);
      \draw[tblue] (5,-4) -- ++(0,.8) -- ++(3,0) -- ++(0,-.8);
      \draw[tblue] (3,0) -- ++(0,-.8) -- ++(3,0) -- ++(0,+.8);
      \draw[tblue] (6,0) -- ++(0,-.8) -- ++(2,0) -- ++(0,+.8);
      \draw[->,tblue] (4,-4) ++(0,.8) -- (7,-.8);
      \draw[->,tblue] (6.5,-4) ++(0,.8) -- (4.5,-.8);

      \draw (4,-4) node[above] {$\al$};
      \draw (6.5,-4) node[above] {$k-\al$};
      \draw (5,4) node[above] {$\be$};
      \draw (10.5,4) node[above] {$\ell-\be$};
%      \begin{scope}[scale=0.6,xshift=-10cm,yshift=5cm]
%      \draw[->,green!60!black] (0,0) -- (3,0);
%      \draw[->,dashed,blue] (0,0) -- (0,3);
%      \draw (3,0) node[below] {$s$};
%      \draw (0,3) node[left] {$t$};
%      \end{scope}
%      
%      \draw (18,0) node{};
%      
%      \draw[thin] (0,0) -- ++(-.7,0) (0,4) -- ++(-.7,0);
      \draw[<->,thin,>=arcs] (2,0) -- node[left]{$b$} ++(0,4);
      \draw[<->,thin,>=arcs] (2,-9) -- node[left]{$a$} ++(0,5);
   \end{tikzpicture}
   \caption{Two-cylinder origami with parameters $(a,b,k,\ell,\al,\be)$.}
   \label{fig:twocylinder}
   \end{center} \vspace{-0.3cm}
\end{figure}

The square-tiled surface $O(s,t)$ is illustrated in Figure \ref{fig:twocylinder}, where any two unmarked opposite  sides are identified. It has six parameters: 
$$
\deff{heights } a, b\in \N,\quad \deff{lengths } k<\ell\in \N,\quad \deff{twists } \al\in \lsem 0,k-1\rsem\; \textrm{ and }\; \be\in \lsem 0, \ell-1\rsem.
$$  
Such a surface is called \deff{two-cylinder with parameters $(a,b,k,\ell,\al,\be)$}.
\end{itemize}

%%%%%%%%%%%%%%%%%%%%%%%%%%%%%%%%%%%%%%%%%%%%%%
\section{The case of a cycle of maximal length}
In this section, we will be interested in the following setа:
\setlength{\jot}{10pt} % to increase the vertical space between equations
\begin{eqnarray*}%\label{} 
\A_{1}(n) &=& \sett{(s,t)\in\snsn}{[s,t] \textrm{ is a } 3\textrm{-cycle},\; s \textrm{ is an } n\textrm{-cycle},\; \group{s,t} = \an \textrm{ or } \sn},\\
\B_{1}(n) &=& \sett{(s,t)\in\snsn}{[s,t] \textrm{ is a } 3\textrm{-cycle},\; s \textrm{ is an } n\textrm{-cycle}}.
\end{eqnarray*}

%%%%%%%%%%%%%%%%%%%%%%%%%%%%%%%%%%%%%%%%%%%%%%
\subsection{A formula for $\#\B_{1}(n)$}
\begin{prop}\label{prop:B1}  
The number of pairs of permutations $(s,t)\in \snsn$ such that the commutator $[s,t]$ is a $3$-cycle and the permutation $s$ is a cycle of length $n$, is equal to
\begin{equation}%\label{eq:}
   \#\B_{1}(n) = C_{n}^{3}\, n!
\end{equation}
\end{prop}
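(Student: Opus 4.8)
The plan is to count $\#\B_{1}(n)$ by fixing the $n$-cycle $s$, grouping the admissible $t$ according to the value of the commutator, and then using the conjugacy reformulation from Lemma \ref{lemm:twotypes}. First I observe that since $s$ consists of a single $n$-cycle, alternative (b) of Lemma \ref{lemm:twotypes} cannot occur (it requires $s$ to have at least two cycles), so every pair $(s,t)\in\B_{1}(n)$ falls under alternative (a). Counting each pair by the unique $3$-cycle it produces, I write
$$\#\B_{1}(n) = \sum_{s}\ \sum_{c}\ \#\sett{t\in\sn}{[s,t]=c},$$
where $s$ runs over the $(n-1)!$ cycles of length $n$ and $c=(z\;y\;x)$ runs over the $3$-cycles of $\sn$.

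Next I reformulate the inner condition. By the identity \eqref{eq:twotypes:conjugate}, the equation $[s,t]=(z\;y\;x)$ is equivalent to $t s t\inv = (x\;y\;z)\,s$. For fixed $s$ and $c$, this equation in $t$ admits a solution if and only if $(x\;y\;z)\,s$ is conjugate to $s$, i.e.\ is itself an $n$-cycle; and when one solution $t_{0}$ exists, the whole solution set is the coset $t_{0}\,C_{\sn}(s)$, whose cardinality equals $\#C_{\sn}(s)=n$ (the centralizer of an $n$-cycle in $\sn$ is cyclic of order $n$, generated by $s$). Therefore
$$\#\B_{1}(n) = n\cdot\#\sett{(s,c)}{s\ \textrm{and}\ (x\;y\;z)\,s\ \textrm{are both }n\textrm{-cycles}},$$
with $s$ an $n$-cycle and $c=(z\;y\;x)$ a $3$-cycle, and the problem is reduced to a purely combinatorial count of such pairs.

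The key step is to decide, for a fixed $n$-cycle $s$, for which $3$-cycles the product $(x\;y\;z)\,s$ is again an $n$-cycle. I claim this happens exactly when $x,y,z$ occur in this cyclic order along $s$. The forward implication is already contained in the discussion of alternative (a) in the general remarks: writing $s=(x_{1}\;\ldots\;x_{a}\;y_{1}\;\ldots\;y_{b}\;z_{1}\;\ldots\;z_{c})$ with $x=x_{1},\,y=y_{1},\,z=z_{1}$ in cyclic order, one computes $(x\;y\;z)\,s=(x_{1}\;\ldots\;x_{a}\;z_{1}\;\ldots\;z_{c}\;y_{1}\;\ldots\;y_{b})$, a single $n$-cycle, whereas the same direct evaluation shows that the reverse order $x\to z\to y$ makes the product split into three shorter cycles. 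Consequently, for each of the $C_{n}^{3}$ unordered triples of points, exactly one of the two cyclic orientations yields an $n$-cycle, so a fixed $s$ admits precisely $C_{n}^{3}$ admissible $3$-cycles $c$.

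Assembling the pieces gives $\#\sett{(s,c)}{\cdots}=(n-1)!\,C_{n}^{3}$ and hence $\#\B_{1}(n)=n\cdot(n-1)!\,C_{n}^{3}=C_{n}^{3}\,n!$, as claimed. The main obstacle is the combinatorial claim of the third paragraph: one must check carefully that multiplying the $n$-cycle $s$ by $(x\;y\;z)$ preserves a single cycle precisely for the order-respecting orientation and otherwise breaks it into three arcs. Since the order-respecting direction is exactly the computation underlying case (a) above, the only remaining work is to confirm the converse (out-of-order $\Rightarrow$ not an $n$-cycle), which follows from the same explicit evaluation of $(x\;y\;z)\,s$ on the relevant arcs of $s$.
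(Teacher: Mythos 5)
Your proposal is correct and follows essentially the same route as the paper: both reduce the count via the conjugacy reformulation $t s t\inv = (x\;y\;z)s$ of Lemma \ref{lemm:twotypes}, use the fact that the solution set in $t$ is a coset of the centralizer of $s$, and count $C_{n}^{3}$ admissible triples per $n$-cycle (one cyclic orientation out of two per unordered triple). The only cosmetic difference is bookkeeping: you compute the two factors $(n-1)!$ and $|C_{\sn}(s)|=n$ explicitly, while the paper combines them as $\#\sett{n\textrm{-cycles}}\times |C_{\sn}(s_{0})| = n!$ directly by orbit--stabilizer.
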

\begin{proof}
It follows from to Lemma \ref{lemm:twotypes}, that for an $n$-cycle $s$, there exists a permutation $t\in\sn$ with condition $[s,t]=(z\;y\;x)$ if and only if the points $x,y,z$ lie in the same cycle of $s$ in the given order. Such a permutation $t$ will be called an \deff{allowed permutation for $s$}, and such a triple $(x,y,z)\in\lsem 1,n\rsem^{3}$ will be said to be an \deff{allowed triple for $s$}. Herewith, we assume that the number $x$ is less than $y$ and $z$. It is clear that for any $n$-cycle $s$, the number of allowed triples is equal to the binomial coefficient $C_{n}^{3} = \frac{1}{6}n(n-1)(n-2)$.

Remark that in order to find the cardinality of the set $\B_{1}(n)$, it is sufficient to calculate the number of allowed permutations $t$ for some $n$-cycle $s$, say for $s_{0}=(1\;2\;\ldots\;n)$, and then multiply that number by the number of all $n$-cycles in $\sn$. 

Further, for a fixed $n$-cycle $s_{0}$ and a fixed allowed triple $(x,y,z)$, the number of all $t$ such that
$$[s_{0},t] = (z\;y\;x), \quad \textrm{\ie}\quad t s_{0} t\inv = (x\;y\;z) s_{0},$$
is equal to the order of the centralizer $C_{\sn}(s_{0})$ of the permutation $s_{0}$ in $\sn$. Indeed, the group $\sn$ acts on its element via conjugation. As the permutations $(x\;y\;z) s_{0}$ and $s_{0}$ lie in the same orbit (they are both $n$-cycles), then the number of distinct $t$ sending $s_{0}$ to $(x\;y\;z) s_{0}$ is equal to the number of distinct $t$ which stabilize the cycle $s_{0}$\,, \ie  $|C_{\sn}(s_{0})|$. 

We obtain
\setlength{\jot}{8pt} % to increase the vertical space between equations
\begin{eqnarray*}%\label{} 
\#\B_{1}(n) &=& \#\sett{(s,t)\in\snsn}{[s,t] = 3\textrm{-cycle},\; s = n\textrm{-cycle}}\\
&=& \#\sett{s\in\sn}{ s = n\textrm{-cycle} }\;\times\; \#\sett{t\in\sn}{t \textrm{ allowed for } s_{0}=(1\;2\;\ldots\;n)} \\
&=& \#\sett{s\in\sn}{ s = n\textrm{-cycle} }\;\times\; |C_{\sn}(s_{0})| \\
& &\qquad\qquad\qquad\times\; \#\sett{ (x,y,z)\in \lsem 1,n\rsem^{3} }{\textrm{the triple } (x,y,z) \textrm{ is allowed for } s_{0}} \\
&=& n! \times C_{n}^{3}.
\end{eqnarray*}
Here we used the fact that the set $\sett{s\in\sn}{ s = n\textrm{-cycle} }$ is the orbit of the permutation $s_{0}$ for the action of $\sn$ via conjugation, and the centralizer $C_{\sn}(s_{0})$ is the stabilizer of the permutation $s_{0}$. In particular, we have\quad $\#\sett{s\in\sn}{ s = n\textrm{-cycle} }\times |C_{\sn}(s_{0})| = |\sn| = n!$.
\end{proof}

%%%%%%%%%%%%%%%%%%%%%%%%%%%%%%%%%%%%%%%%%%%%%%
\subsection{A formula for $\#\A_{1}(n)$}
Let us now find the cardinality of the set $\A_{1}(n)$.
\begin{lemm}\label{lemm:onecylinderprim} 
A one-cylinder square-tiled surface with parameters $(k,a, b, c)$ is primitive if and only if\; $k=1$\; and\; $a\land b\land c = 1$.
\end{lemm}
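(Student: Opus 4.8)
Let me set up the plan around the standard fact (recorded in the Appendix on origamis) that the one‑cylinder surface $O_{s,t}$ is \emph{primitive} exactly when its monodromy group $G=\group{s,t}$ acts primitively on the $n$ tiles $\lsem 1,n\rsem$, i.e.\ preserves no partition into equal blocks of size between $2$ and $n-1$. Thus the whole statement becomes a question about block systems of $G$, which I would read off directly from the parameters $(k,a,b,c)$. Recall from the description of case (a) that $s=u_1\cdots u_k$ is a product of $k$ disjoint cycles of length $a+b+c$, so $n=k(a+b+c)$, that $u_1=(x_1\cdots x_a\,y_1\cdots y_b\,z_1\cdots z_c)$, and that the commutator $[s,t]=(z_1\,y_1\,x_1)$ is a $3$‑cycle lying in $G$ (one checks this $3$‑cycle is independent of the choice of the allowed $t$, since replacing $t$ by $t\,s^{j}$ changes neither $G$ nor $[s,t]$).

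The plan is to prove the two ``only if'' implications by exhibiting explicit invariant partitions. First, if $k>1$, the supports $R_1,\dots,R_k$ of the cycles of $s$ partition $\lsem 1,n\rsem$ into $k\ge 2$ blocks of size $a+b+c\ge 3$; since $s$ preserves each $R_i$ and $t$ sends $\supp(u_i)$ to $\supp(u_{i+1})$ cyclically, $G$ preserves $\{R_1,\dots,R_k\}$, so it is imprimitive. Second, if $k=1$ but $d:=a\land b\land c>1$, I would identify $\lsem 1,n\rsem$ with $\Z/n$ via the $n$‑cycle $s$, so that $s$ becomes the translation $x\mapsto x+1$; in these coordinates $t$ fixes one arc of positions and translates the two remaining arcs by $b$ and by $-c$. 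As $d\mid b$ and $d\mid c$, all three shifts vanish modulo $d$, so $t$ preserves the partition $\mathcal P_d$ of $\Z/n$ into residue classes modulo $d$; this partition is nontrivial because $1<d<n$, and $G$ is again imprimitive. Together these give ``primitive $\Rightarrow k=1$ and $a\land b\land c=1$''.

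For the converse I assume $k=1$ and $a\land b\land c=1$ and argue by contradiction. Keeping the identification $\lsem 1,n\rsem\cong\Z/n$, the fact that $\group{s}$ is a regular cyclic subgroup forces every $G$‑invariant partition to be a congruence partition $\mathcal P_f$ into residue classes modulo some divisor $f\mid n$ (block systems of a regular cyclic action correspond to its subgroups); a nontrivial one has $1<f<n$. Now I would bring in the $3$‑cycle $\gamma=[s,t]$, whose support consists of the points occupying positions $0$, $a$ and $a+b$ along $s$ (namely $x_1,y_1,z_1$). The key elementary step is: \emph{the support of any $3$‑cycle contained in a group preserving a block system of block size $\ge 2$ lies inside a single block.} Granting this and applying it to $\gamma$, the three positions $0,a,a+b$ fall in one residue class modulo $f$, so $f\mid a$ and $f\mid b$; with $f\mid n$ this yields $f\mid c$, hence $f\mid a\land b\land c=1$, contradicting $f>1$. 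Therefore no nontrivial block system survives and $G$ is primitive.

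The main obstacle is precisely this elementary $3$‑cycle lemma, and specifically the cases where the blocks have size $2$ or $3$, so that a single block can absorb several of the moved points. The naive approach — comparing $f$ with the arc lengths $a,b,c$ — splinters into many cases; I expect the clean route is instead the trichotomy on how many blocks the $3$‑point support meets. If the support meets three blocks, $\gamma$ would cyclically permute them, yet each such block contains a further point, necessarily fixed by $\gamma$ and hence trapped in its own block, a contradiction; if it meets two blocks, tracking $\gamma$ on the doubly‑occupied block shows that block is simultaneously sent to itself and to the other block, again a contradiction; so the support meets exactly one block. This dispatches all block sizes uniformly. Finally I would invoke the Appendix identification once more to conclude that primitivity of the group $G$ is literally primitivity of the one‑cylinder origami $O_{s,t}$, completing the equivalence.
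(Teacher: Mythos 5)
Your proof is correct, and your converse takes a genuinely different route from the paper's. For the forward direction you exhibit exactly the same two block systems as the paper: the cycle supports of $s$ when $k>1$, and the residue classes modulo $d=a\land b\land c$ when $d>1$ (your description of the shifts effected by $t$ as $0$, $+b$, $-c$ is slightly loose when $b\neq c$, since the arcs shifted by $+b$ and $-c$ do not exactly coincide with the $y$- and $z$-arcs, but every shift is $\equiv 0 \pmod{d}$, which is all you use). For the converse, the paper argues by hand: it takes a block $\De$ containing $1$, shows via a least-$s$-distance argument that $\De$ is an arithmetic progression of step $d\mid n$, and then extracts $d\mid a$ and $d\mid a+b$ from an explicit computation of $t(\De)$ with remainders $r_{1}, r_{2}$ (\cf Figures~\ref{fig:periodicblock} and \ref{fig:periodicblock1}), which requires the normalized $t$ fixing the point $1$. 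You instead (i) invoke the standard fact that blocks of the regular cyclic group $\group{s}\cong\Z/n\Z$ through a fixed point are exactly its subgroups, so any nontrivial $G$-block system is the congruence partition modulo some $f\mid n$ with $1<f<n$; and (ii) prove a clean general lemma --- a $3$-cycle in a group preserving a block system with blocks of size $\geqs 2$ must have its support inside a single block --- and apply it to $[s,t]$, whose support sits at positions $0$, $a$, $a+b$, giving $f\mid a$ and $f\mid b$ at once, whence $f\mid c$ from $f\mid n$ and $f=1$. Your trichotomy proof of the lemma is complete: if the support meets three blocks, a second (fixed) point of the first block would have to land in the second block; if it meets two, the doubly-occupied block is mapped both to itself and to the other block, and this works in both cyclic orders. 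What your route buys: it uses nothing about $t$ beyond the single intrinsic fact that $[s,t]$ is a $3$-cycle in known positions, so it avoids the paper's normalization of $t$ and its figure-based case analysis, and your $3$-cycle lemma is the same mechanism that powers Jordan's theorem invoked later in the paper. What the paper's route buys: it is entirely self-contained and elementary, re-deriving the congruence structure of blocks from scratch rather than citing the block--subgroup correspondence for regular actions.
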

\begin{proof}
\onlyif Let $O(s,t)$ be a \emph{primitive} one-cylinder origami with parameters $(k,a, b, c)$ as  in Figure \ref{fig:onecylinder}. By definition, its monodromy group $G=\group{s,t}\subseteq \sn$ is primitive. The permutation $s$ is a product of $k$ disjoint cycles of length $\drob{n}{k}$, 
$$s=u_{1} u_{2}\cdots u_{k}\, .$$
Consider the set of points $\De$ which are moved by the cycle $u_{1}$\,:
$$\De = \supp\; u_{1}\, .$$
Let us prove that it is a block for the group $G$. Indeed, any permutation $w\in G$ can be presented as a word in the alphabet $\set{s,t,s\inv,t\inv}$\,:
$$w = s^{\ep_{1}} t^{\ep_{2}}\ldots s^{\ep_{\ell-1}}t^{\ep_{\ell}},\qquad \textrm{where }\; \ep_{i}\in \lsem -1, 1\rsem,$$
since the group $G$ is generated by the pair of permutations $s$ and $t$. As 
$$s(\supp\; u_j) = \supp\; u_{j}\qquad\textrm{and}\qquad t(\supp\; u_{j}) = \supp\; u_{j+1\mmod k}$$
for each $j\in \lsem 1, k\rsem$, then the image of the set $\De$ under the action of $w$ is equal to the support of one of the permutations $u_{1}, \ldots, u_{k}$\,. In particular,
$$w(\De) = \De\quad \textrm{or}\quad w(\De)\cap \De=\emptyset\,,$$ 
from where $\De$ is a block for the group $G$. In view of the primitivity of $G$\,, we get that $\De = \lsem 1, n\rsem$, \ie $k=1$ and the permutation $s$ is an $n$-cycle (\cf Figure \ref{fig:onecylinder1}):
$$s = (x_{1}\;\ldots\;x_{a}\;y_{1}\;\ldots\;y_{b}\;z_{1}\;\ldots\;z_{c})\,.$$

Let us now show that the greatest common divisor  \;$d = a\land b\land c$\; equals $1$. Consider the set 
$$\De = \set{x_{d},x_{2d}, \ldots, x_{a},\; y_{d}, y_{2d},\ldots, y_{b},\; z_{d}, z_{2d},\ldots, z_{c}}\,,$$
which is subsequence of numbers in the cycle $s$ with step $d$. Denote by  
$$\De_{j} = \set{x_{d+j},x_{2d+j}, \ldots, x_{a+j}=x_{j},\;\; y_{d+j}, y_{2d+j},\ldots, y_{b+j}=y_{j},\;\; z_{d+j}, z_{2d+j},\dots, z_{c+j}=z_{j}}$$
the $j^{\textrm{th}}$ shift of the set $\De$\,, where $j\in \lsem 1, d\rsem$. Then 
$$s(\De_j) = \De_{j+1\mmod d}\quad\textrm{and}\quad t(\De_{j}) = \De_{j}\,.$$
Hence, the image of the set $\De$ for the action of any permutationи $w\in G$ (which can be expressed as a word on $s$ and $t$) coincides with one of the shifts of $\De$. In particular, $w(\De) = \De\; \textrm{or}\; w(\De)\cap \De=\emptyset\,,$ \ie  $\De$ is a block for the group $G$. Since $G$ is primitive and $\De$ contains more than one element, then $\De = \lsem 1, n\rsem$, from where $d=1$.

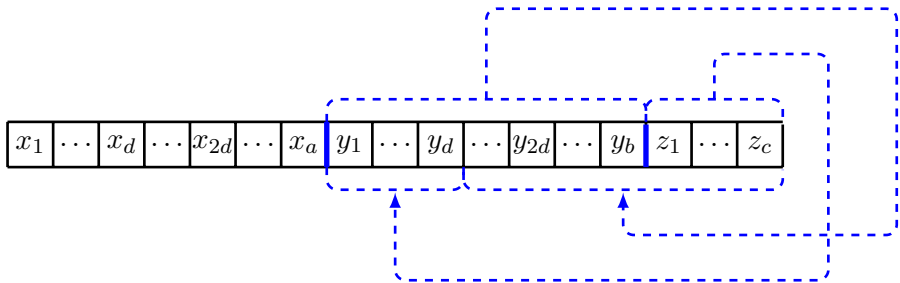
\begin{figure}[htbp]
   \begin{center}
   \begin{tikzpicture}[scale=.6, >=latex,shorten >=1pt,line width=1pt,bend angle=20,
        circ/.style={circle,inner sep=2pt,draw=black!40,rounded corners, text centered},
        tblue/.style={dashed,blue,rounded corners}]
      \draw[step=1] (-6,0) grid +(17,1);
      \draw (-6,0)+(.5,.5) node {$x_{1}$};
      \draw (-5,0)+(.5,.5) node {$\ldots$};
      \draw (-4,0)+(.5,.5) node {$x_{d}$};
      \draw (-3,0)+(.5,.5) node {$\ldots$};
      \draw (-2,0)+(.5,.5) node {$x_{2d}$};
      \draw (-1,0)+(.5,.5) node {$\ldots$};
      \draw (0,0)+(.5,.5) node {$x_{a}$};
      \draw (1,0)+(.5,.5) node {$y_{1}$};
      \draw (2,0)+(.5,.5) node {$\ldots$};
      \draw (3,0)+(.5,.5) node {$y_{d}$};
      \draw (4,0)+(.5,.5) node {$\ldots$};
      \draw (5,0)+(.5,.5) node {$y_{2d}$};
      \draw (6,0)+(.5,.5) node {$\ldots$};
      \draw (7,0)+(.5,.5) node {$y_{b}$};
      \draw (8,0)+(.5,.5) node {$z_{1}$};
      \draw (9,0)+(.5,.5) node {$\ldots$};
      \draw (10,0)+(.5,.5) node {$z_{c}$};
      
      \draw[line width=2pt,blue] (1,0) -- ++(0,1) (8,0) -- ++(0,1);
      
      \draw[tblue] (1,1) -- ++(0,.5) -- ++(7,0) -- ++(0,-.5);
      \draw[tblue] (4,0) -- ++(0,-.5) -- ++(7,0) -- ++(0,.5);
      \draw[tblue] (8,1) -- ++(0,.5) -- ++(3,0) -- ++(0,-.5);
      \draw[tblue] (1,0) -- ++(0,-.5) -- ++(3,0) -- ++(0,.5);
      \draw[->,tblue] (4.5,1.5) -- ++(0,2) -- ++(9,0) -- ++(0,-5) -- ++(-6,0) -- ++(0,1); 
      \draw[->,tblue] (9.5,1.5) -- ++(0,1) -- ++(2.5,0) -- ++(0,-5) -- ++(-9.5,0) -- ++(0,2); 
   \end{tikzpicture}
   \caption{One-cylinder origami with parameters $(1,a,b,c)$.}
   \label{fig:onecylinder1}
   \end{center} 
\end{figure}

\vs\ifonly Conversely, consider a one-cylinder origami $O(s,t)$  with parameters $(1,a, b, c)$, where
$$s = (x_{1}\;\ldots\;x_{a}\;y_{1}\;\ldots\;y_{b}\;z_{1}\;\ldots\;z_{c}), \quad [s,t] = (z_{1}\;y_{1}\;x_{1})\quad \textrm{and}\quad a\land b\land c=1.$$
Without loss of generality, we can assume that $s=(1\;2\;\ldots\;n)$, $x_{1}=1$, $y_{1}=a+1$ and $z_{1}=a+b+1$. Let us show that the permutation group $G=\group{s,t}$ is primitive. For this, consider an arbitrary block $\De\subseteq\lsem 1,n\rsem$ of the group $G$ containing the point $1$ and another point. %$q\in \lsem 1, n\rsem\setminus\set{x_{1}}$. 
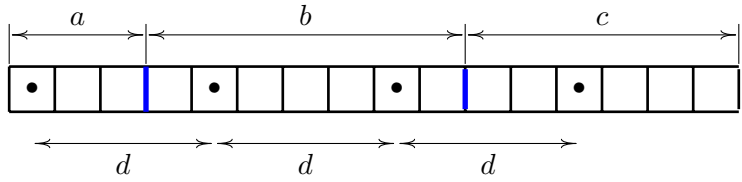
\begin{figure}[htbp]
   \begin{center}
   \begin{tikzpicture}[scale=.6, >=latex,shorten >=1pt,shorten <=1pt,line width=1pt,bend angle=20, circ/.style={circle,inner sep=2pt,draw=black!40,rounded corners, text centered}]
        
      	\draw[step=1] (0,0) grid +(16,1);
      	\foreach \x in {0,4,...,12}
      		\draw (\x,0)+(.5,.5) node {$\bullet$}; 
	
	\path[<->,thin,>=arcs]  (0,1.7) edge node[above] {$a$} ++ (3,0)
						(3,1.7)  edge node[above] {$b$} ++ (7,0)  
						(10,1.7) edge node[above] {$c$} ++ (6,0);
	\foreach \x in {0,3,10,16}
		\draw[thin] (\x,1) -- ++(0,1);
      	\draw[line width=2pt,blue] (3,0) -- ++(0,1) (10,0) -- ++(0,1);
	
	\foreach \x in {0,4,8}
		\path [<->,thin,>=arcs]  (\x,0)++(.5,-.7) edge node[below] {$d$} ++(4,0);
   \end{tikzpicture}
   \caption{The marked points belong to the block $\De$.}
   \label{fig:periodicblock}
   \end{center} 
\end{figure}
Denote by $d$ the least $s$-distance between the pairs of points from the block $\De$. Choose a pair of points $i$ and $i+d$ (mod $n$) from $\De$, for which such distance is reached. Since $i+d \in \De\cap s^{d}(\De)$, then by definition of a block we have $\De = s^{d}(\De)$, and so the point $i+2d$\, also lies in $\De$. By induction, we conclude that the block $\De$ contains the number $i+ \ell d$ (mod $n$) for any integer $\ell$. As $d$ is the least $s$-distance between points of the block and $1\in\De$, then $d$ divides $n$ and 
$$\De = \sett{1 + \ell d\mmod n}{\ell\in\Z}.$$

Let us also prove that $d$ divides the numbers $a$, $b$ and $c$. Indeed, divide $a$ and $a+b$ by $d$\,: 
$$a = \ell_{1}d+r_{1} \quad\textrm{and\quad} a+b = \ell_{2}d+r_{2}$$ %\quad\textrm{и}\quad c = l_{3}d+r_{3}\,,$$
with remainders $r_{1}, r_{2}\in \lsem 1, d\rsem$. Investigate the image $t(\De)$, \cf Figure \ref{fig:periodicblock1}. Since $1 \in \De\cap t(\De)$, then from the definition of a block follows that $t(\De) = \De$. 
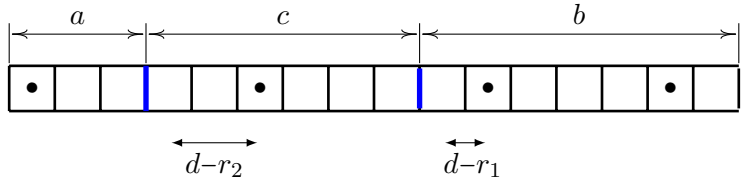
\begin{figure}[htbp]
   \begin{center}
   \begin{tikzpicture}[scale=.6, >=latex,shorten >=1pt,shorten <=1pt,line width=1pt,bend angle=20, circ/.style={circle,inner sep=2pt,draw=black!40,rounded corners, text centered}]
        
      	\draw[step=1] (0,0) grid +(16,1);
      	\foreach \x in {0,5,10,14}
      		\draw (\x,0)+(.5,.5) node {$\bullet$}; 
	
	\path[<->,thin,>=arcs]  (0,1.7) edge node[above] {$a$} ++ (3,0)
						(3,1.7)  edge node[above] {$c$} ++ (6,0)  
						(9,1.7) edge node[above] {$b$} ++ (7,0);
	\foreach \x in {0,3,9,16}
		\draw[thin] (\x,1) -- ++(0,1);
      	\draw[line width=2pt,blue] (3,0) -- ++(0,1) (9,0) -- ++(0,1);

	\path [<->,thin]  (3,0)++(.5,-.7) edge node[below] {$\medmu d-r_{2}$} ++(2,0);
	\path [<->,thin]  (9,0)++(.5,-.7) edge node[below,xshift=.1cm] {$\medmu d-r_{1}$} ++(1,0);
   \end{tikzpicture}
   \caption{The marked points belong to the block $t(\De)$.}
   \label{fig:periodicblock1}
   \end{center} \vspace{-0.3cm}
\end{figure}
Taking into account how the permutation $t$ acts on the set $\lsem 1,n\rsem$ and that $d$ divides $a+b+c$, we obtain 
\begin{align*}%\label{}
d - r_{1} &= 0, \\
d - r_{2} &= d - r_{1}\,. 
\end{align*}
Hence, $r_{1}=r_{2}=d$, from where $d$ divides number $a$, $a+b$, $a+b+c$, and so the 
integers $a$, $b$, $c$ as well. The last three numbers are coprime, implying that $d=1$.
\end{proof}

\vs
\begin{lemm}\label{lemm:onecylinderprim1} 
Let $d$ be a natural divisor of $n$. The number of triples $(x, y, z)$ in the set $\lsem 1, n\rsem^{3}$ such that
$$x< y< z,\quad d\mid y-x\quad\textrm{and}\quad d\mid z-y\,,$$
is equal to\; $d\cdot C_{n/d}^{3}$\,.
\end{lemm}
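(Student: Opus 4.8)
The plan is to reduce the two divisibility constraints to a single congruence condition, and then reduce the count to a sum over residue classes. First I would observe that the hypotheses $d\mid y-x$ and $d\mid z-y$ are together equivalent to the single statement that $x$, $y$, and $z$ all lie in the same residue class modulo $d$: indeed, $d\mid y-x$ gives $x\equiv y$, $d\mid z-y$ gives $y\equiv z$, and conversely these two congruences reproduce the divisibility conditions. Thus the triples being counted are exactly the triples $x<y<z$ whose three entries are mutually congruent modulo $d$.

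Next I would partition the set $\lsem 1,n\rsem$ into the $d$ residue classes modulo $d$. Here the hypothesis that $d$ is a divisor of $n$ enters in an essential way: because $d\mid n$, each of these $d$ classes contains exactly $n/d$ elements of $\lsem 1,n\rsem$. Any triple counted by the lemma must have all three of its entries inside a single one of these classes, and triples drawn from distinct classes are disjoint families, so the total count is the sum of the counts over the $d$ classes.

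Finally I would count within a fixed class. Inside a class of size $n/d$, a triple with the ordering $x<y<z$ is precisely an unordered choice of three distinct elements, of which there are $C_{n/d}^{3}$, the ordering being uniquely determined. Since the $d$ classes each contribute the same number, summing yields $d\cdot C_{n/d}^{3}$, which is the claimed value.

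I do not expect a genuine obstacle here; the argument is elementary. The only point requiring care is the bookkeeping that makes the hypothesis $d\mid n$ explicit, guaranteeing that every residue class has the uniform size $n/d$; without this each class might have either $\lfloor n/d\rfloor$ or $\lceil n/d\rceil$ elements and the clean closed form would fail.
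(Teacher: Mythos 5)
Your proof is correct and takes essentially the same route as the paper: the paper simply makes your residue-class decomposition explicit by writing $x=\al d+r$, $y=\be d+r$, $z=\ga d+r$ with $\al<\be<\ga\in\lsem 0,\drob{n}{d}-1\rsem$ and $r\in\lsem 1,d\rsem$, yielding $C_{n/d}^{3}$ choices for the triple of coefficients and $d$ choices for the residue. Your closing remark about $d\mid n$ guaranteeing uniform class size is a fair observation and is implicit in the paper's parametrization.
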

\begin{proof} 
Since the differences $y-x$ and $z-y$ are divisible by $d$, the numbers $x, y, z$ are equal modulo $d$\,:
$$x= \al d + r, \quad y = \be d+r, \quad z = \ga d + r,$$
where $\al < \be < \ga\in \lsem 0, \drob{n}{d}-1\rsem$\; and\; $r\in \lsem 1,d\rsem$. There are exactly $C_{n/d}^{3}$ ways to choose a triple of coefficients $(\al, \be, \ga)$ and exactly $d$ ways to choose a remainder $r$, which proves the statement.
\end{proof}

\vs
\begin{lemm}\label{lemm:onecylinderprim2} 
For any positive integer $n>2$, the number of triples $x, y, z$ in the set $\lsem 1, n\rsem$ such that
$$x< y< z\quad\textrm{ and }\quad (y-x)\land (z-y)\land n=1\,,$$
equals\; $\frac{n}{6}J_{2}(n) - \frac{n}{2} J_{1}(n)$\,, where $J_{k}(n)$ is Jordan's totient function of order $k$\,.
\end{lemm}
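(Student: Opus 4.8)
The plan is to count by inclusion--exclusion on the prime divisors of $n$, which amounts to Möbius inversion over the divisor lattice of $n$. For each divisor $d\mid n$, write $A_{d}$ for the number of triples $x<y<z$ in $\lsem 1,n\rsem$ with $d\mid y-x$ and $d\mid z-y$. By Lemma \ref{lemm:onecylinderprim1} this quantity is already known, namely $A_{d}=d\,C_{n/d}^{3}$. Note that the pair of conditions $d\mid y-x$ and $d\mid z-y$ is equivalent to $d\mid (y-x)\land(z-y)$, so $A_{d}$ counts exactly the triples whose common difference-gcd is divisible by $d$.

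First I would observe that a triple satisfies $(y-x)\land(z-y)\land n=1$ precisely when, for every prime $p\mid n$, the prime $p$ fails to divide at least one of $y-x$ and $z-y$. Let $E_{p}$ denote the event ``$p\mid y-x$ and $p\mid z-y$''; then the condition is that none of the $E_{p}$ (for $p\mid n$) occurs. Since distinct primes $p_{1},\ldots,p_{r}$ all dividing $y-x$ and $z-y$ is the same as $p_{1}\cdots p_{r}$ dividing both, the intersection $\bigcap_{p\in S}E_{p}$ has cardinality $A_{d}$ with $d=\prod_{p\in S}p$, and $(-1)^{|S|}=\mu(d)$. Running inclusion--exclusion over the primes dividing $n$ therefore expresses the desired count as $\sum_{d\mid n}\mu(d)\,A_{d}$, the Möbius function automatically killing the non-squarefree $d$. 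Substituting $A_{d}=d\,C_{n/d}^{3}$, the target becomes $\sum_{d\mid n}\mu(d)\,d\,C_{n/d}^{3}$.

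Next I would expand $C_{n/d}^{3}=\tfrac16\big((n/d)^{3}-3(n/d)^{2}+2(n/d)\big)$ and split the sum into three pieces according to the power of $n/d$. Collecting terms, these are $n^{3}\sum_{d\mid n}\mu(d)/d^{2}$, then $-3n^{2}\sum_{d\mid n}\mu(d)/d$, and finally $2n\sum_{d\mid n}\mu(d)$, all divided by $6$. Recognising the standard expressions $J_{2}(n)=n^{2}\sum_{d\mid n}\mu(d)/d^{2}$ and $J_{1}(n)=n\sum_{d\mid n}\mu(d)/d$ for Jordan's totient, the first two sums equal $nJ_{2}(n)$ and $nJ_{1}(n)$ respectively, while the third vanishes because $\sum_{d\mid n}\mu(d)=0$ for $n>1$. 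Assembling the pieces yields $\tfrac16\big(nJ_{2}(n)-3nJ_{1}(n)\big)=\tfrac{n}{6}J_{2}(n)-\tfrac{n}{2}J_{1}(n)$, as claimed.

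The only genuinely delicate point is the opening step: correctly identifying that the condition ``gcd with $n$ equals $1$'' translates into Möbius inversion over the divisors of $n$ (not over all divisors of $(y-x)\land(z-y)$), and that Lemma \ref{lemm:onecylinderprim1} supplies exactly the quantity $A_{d}$ sitting at each node of this inversion. Once this is set up, the remainder is a routine manipulation of divisor sums together with the identification of Jordan's totient functions; the hypothesis $n>2$ is used only to guarantee $n>1$ so that the $\sum_{d\mid n}\mu(d)$ term drops out.
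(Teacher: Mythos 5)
Your proof is correct and takes essentially the same route as the paper's own argument: inclusion--exclusion over the prime divisors of $n$, rewritten as the M\"{o}bius sum $\sum_{d\mid n}\mu(d)\,d\,C_{n/d}^{3}$ via Lemma \ref{lemm:onecylinderprim1}, followed by expanding the binomial coefficient and applying $\sum_{d\mid n}\mu(d)/d^{k}=\prod_{p\mid n}\left(1-p^{-k}\right)$ (the paper's equality \eqref{eq:mud}) to identify $\frac{n}{6}J_{2}(n)-\frac{n}{2}J_{1}(n)$. Your closing observations --- that $\mu$ kills non-squarefree divisors and that the hypothesis is only needed to make $\sum_{d\mid n}\mu(d)$ vanish --- are both accurate.
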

\begin{proof} 
We shall proceed by the inclusion-exclusion principle. There are $C_{n}^{3}$ triples $x< y< z$ in the interval $\lsem 1, n\rsem$. From this number one must subtract the number of triples such that $y-x$ and $z-y$ are simultaneously divisible by some prime divisor of $n$. After that, one must add the number of those triples, for which $y-x$ and $z-y$ are simultaneously divisible by some two \emph{distinct} prime divisors of $n$, and so on. In the end, we will get
\vs
$\#\sett{(x,y,z)\in\lsem 1, n\rsem^{3}}{x<y<z,\;  (y-x)\land (z-y)\land n=1}$
\setlength{\jot}{8pt} % to increase the vertical space between equations
\begin{eqnarray*}%\label{} 
 &=& C_{n}^{3} \hss - \sum_{\substack{p | n\\ p\;\textrm{prime}}}\mhs \sett{(x,y,z)\in\lsem 1, n\rsem^{3}}{ x<y<z,\; p \textrm{ divides } y-x\textrm{ and } z-y}\\
 & & \hspace{.55cm} +\mhs \sum_{\substack{p_{1} p_{2} | n\\ p_{1}\neq p_{2}\;\textrm{prime}}}\mhs\mhs \sett{(x,y,z)\in\lsem 1, n\rsem^{3}}{ x<y<z,\; p_{1} p_{2} \textrm{ divides } y-x\textrm{ and } z-y}\\
\end{eqnarray*}
\begin{eqnarray*}%\label{} 
 & & \hspace{.55cm} -\mhs\mhs \sum_{\substack{p_{1} p_{2} p_{3} | n\\ p_{1}\neq p_{2}\neq p_{3}\;\textrm{prime}}}\mhs\mhs\mhs \sett{(x,y,z)\in\lsem 1, n\rsem^{3}}{ x<y<z,\; p_{1} p_{2} p_{3} \textrm{ divides } y-x\textrm{ and } z-y}\\
 & & \hspace{.55cm} +\hspace{.5cm} \ldots\\
 &=& \sum_{d | n} \mu(d) \sett{(x,y,z)\in\lsem 1, n\rsem^{3}}{ x<y<z,\;\; d \textrm{ divides } y-x\textrm{ and } z-y}.
\end{eqnarray*}
Due to Lemma \ref{lemm:onecylinderprim1}, the last sum is equal to
$$\sum_{d | n} \mu(d) d\,C_{\drob{n}{d}}^{3} \;=\; \frac{1}{6} \sum_{d | n} \mu(d)\, n\left(\frac{n}{d} - 1\right) \left( \frac{n}{d} - 2\right) \;=\; \frac{\,n^{3}}{6} \sum_{d | n} \frac{\mu(d)}{d^{2}} 
\;-\; \frac{\,n^{2}}{2} \sum_{d | n} \frac{\mu(d)}{d} \;+\; \frac{n}{3} \sum_{d | n} \mu(d)\,,$$
and using the equality \eqref{eq:mud} for $k=0, 1, 2$, we obtain the expression
$$\frac{\,n^{3}}{6}\mhs \prod_{\substack{p | n\\ p\;\textrm{prime}}} \mhs\left(1 - \frac{1}{p^{2}}\right) 
\;-\; \frac{\,n^{2}}{2}\mhs \prod_{\substack{p | n\\ p\;\textrm{prime}}} \mhs\left(1 - \frac{1}{p}\right)
\;+\; \frac{n}{3}\times 0 \;=\; \frac{n}{6}J_{2}(n) - \frac{n}{2} J_{1}(n)\,,
$$
which completes the proof.
\end{proof}

\begin{prop}\label{prop:A1} 
The number of pairs of permutations $(s,t)\in \snsn$ with a $3$-cycle commutator, the permutation $s$ being an $n$-cycle and which generate $A_{n}$ or $S_{n}$\,, is equal to
\begin{equation}%\label{eq:}
   \#\A_{1}(n) = \frac{n}{6}\bigg( J_{2}(n) - 3 J_{1}(n) \bigg)\, n!
\end{equation}
\end{prop}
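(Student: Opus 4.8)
The plan is to keep the counting skeleton of Proposition \ref{prop:B1} and to replace the count $C_{n}^{3}$ of all allowed triples by the count of those triples whose pair generates $\an$ or $\sn$. Fixing the reference cycle $s_{0}=(1\;2\;\ldots\;n)$, the same orbit-stabiliser argument as in Proposition \ref{prop:B1} gives
$$\#\A_{1}(n) = \#\sett{s\in\sn}{s=n\textrm{-cycle}}\times |C_{\sn}(s_{0})| \times N(n) = n!\cdot N(n),$$
where $N(n)$ is the number of triples $(x,y,z)$ allowed for $s_{0}$ whose associated pair generates $\an$ or $\sn$. One small but essential observation legitimises this factorisation: the centraliser of an $n$-cycle is $C_{\sn}(s_{0})=\group{s_{0}}$, so all the admissible $t$ attached to a fixed triple differ from one another by right multiplication by a power of $s_{0}$, and hence generate the \emph{same} group together with $s_{0}$. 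Thus ``generating $\an$ or $\sn$'' is a property of the triple alone, and $N(n)$ is well defined.

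The key step is the equivalence, valid for an $n$-cycle $s$: a pair $(s,t)\in\B_{1}(n)$ satisfies $\group{s,t}=\an$ or $\sn$ if and only if the group $\group{s,t}$ is primitive. The forward implication is clear because $\an$ and $\sn$ are primitive for $n>2$. For the converse I would invoke Jordan's theorem: a primitive subgroup of $\sn$ containing a $3$-cycle must contain $\an$. Since $[s,t]$ is by hypothesis a $3$-cycle lying in $\group{s,t}$, primitivity forces $\an\subseteq\group{s,t}$, whence $\group{s,t}=\an$ or $\sn$. This is the main conceptual point — the place where Jordan's theorem turns an algebraic generation condition into the geometric primitivity of the origami.

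It remains to count. As $s_{0}$ is an $n$-cycle, the corresponding one-cylinder origami has $k=1$, so Lemma \ref{lemm:onecylinderprim} applies and primitivity is equivalent to $a\land b\land c=1$, where $(a,b,c)$ are the three gaps cut out by the triple. For $s_{0}=(1\;2\;\ldots\;n)$ the allowed triples are exactly those with $x<y<z$: once $x$ is taken to be the smallest element, the cyclic order in $s_{0}$ forced by Lemma \ref{lemm:twotypes}(a) is automatically $x,y,z$, so each $3$-element subset contributes exactly one allowed triple. For such a triple the gaps are $a=y-x$, $b=z-y$, $c=n-z+x$, and since $a+b+c=n$ every common divisor of $a,b,c$ divides $n$ and conversely; hence $a\land b\land c=(y-x)\land(z-y)\land n$. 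The primitivity condition therefore coincides exactly with the coprimality hypothesis of Lemma \ref{lemm:onecylinderprim2}.

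Finally I would assemble the pieces. By Lemma \ref{lemm:onecylinderprim2} the number of admissible triples is $N(n)=\frac{n}{6}J_{2}(n)-\frac{n}{2}J_{1}(n)$, and multiplying by $n!$ yields
$$\#\A_{1}(n)=\left(\frac{n}{6}J_{2}(n)-\frac{n}{2}J_{1}(n)\right)n!=\frac{n}{6}\bigl(J_{2}(n)-3J_{1}(n)\bigr)\,n!,$$
as claimed. The only delicate points are the Jordan-theorem equivalence above and the bookkeeping identifying the gaps $(a,b,c)$ with $(y-x,\,z-y,\,n-z+x)$; once these are settled the computation is immediate from Lemma \ref{lemm:onecylinderprim2}.
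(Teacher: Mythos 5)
Your proposal is correct and follows essentially the same route as the paper's proof: Jordan's theorem converts generation into primitivity of the one-cylinder origami, Lemma \ref{lemm:onecylinderprim} turns primitivity into the condition $a\land b\land c=1$ on the gaps $(y-x,\,z-y,\,n-z+x)$, and Lemma \ref{lemm:onecylinderprim2} together with the orbit--stabiliser factorisation from Proposition \ref{prop:B1} yields the formula. Your explicit remark that $C_{\sn}(s_{0})=\group{s_{0}}$, so that all $t$ attached to a fixed triple generate the same group and ``allowed'' is genuinely a property of the triple, is a point the paper leaves implicit when it says ``analogically to the proof of Proposition \ref{prop:B1}'' --- a welcome clarification rather than a deviation.
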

\begin{proof} 
We shall apply Jordan's theorem (Proposition \ref{prop:jordan}), Lemma \ref{lemm:onecylinderprim}, Lemma \nolinebreak \ref{lemm:twotypes} and Lemma \ref{lemm:onecylinderprim2}. 

By Jordan's theorem, a pair of permutations $(s,t)\in\snsn$ with a $3$-cycle commutator 
\begin{equation}\label{eq:stzyx}
  [s,t] = (z\;y\;x), \quad \textrm{\ie}\quad t s t\inv = (x\;y\;z) s,
\end{equation}
generates the alternating or the symmetric group of degree $n$ if and only if the square-tiled surface $O(s,t)$ is primitive.  
If the permutation $s$ is an $n$-cycle, then the surface $O(s,t)$ is one-cylinder.  We conclude that according Lemma \ref{lemm:onecylinderprim}, the pair  $(s,t)$ belongs to the set $\A_{1}(n)$ if and only if the origami $O(s,t)$ is one-cylinder with parameters $(1,a, b, c)$, where $a\land b\land c = 1$.

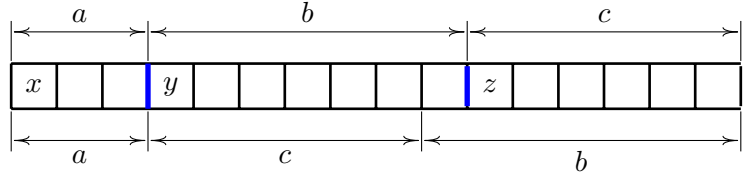
\begin{figure}[htbp]
   \begin{center}
   \begin{tikzpicture}[scale=.6, >=latex,shorten >=1pt,shorten <=1pt,line width=1pt,bend angle=20, circ/.style={circle,inner sep=2pt,draw=black!40,rounded corners, text centered}]
        
      	\draw[step=1] (0,0) grid +(16,1);
	\draw (0,0)+(.5,.5) node {$x$}; 
	\draw (3,0)+(.5,.5) node {$y$}; 
	\draw (10,0)+(.5,.5) node {$z$}; 
	
	\path[<->,thin,>=arcs]  (0,1.7) edge node[above] {$a$} ++ (3,0)
						(3,1.7)  edge node[above] {$b$} ++ (7,0)  
						(10,1.7) edge node[above] {$c$} ++ (6,0);
	\foreach \x in {0,3,10,16}
		\draw[thin] (\x,1) -- ++(0,1);
      	\draw[line width=2pt,blue] (3,0) -- ++(0,1) (10,0) -- ++(0,1);

	\path[<->,thin,>=arcs]  (0,-.7) edge node[below] {$a$} ++ (3,0)
						(3,-.7)  edge node[below] {$c$} ++ (6,0)  
						(9,-.7) edge node[below] {$b$} ++ (7,0);
	\foreach \x in {0,3,9,16}
		\draw[thin] (\x,0) -- ++(0,-1);
   \end{tikzpicture}
   \caption{A primitive one-cylinder origami with parameters $(1,a,b,c)$.}
   \label{fig:onecilinderprim}
   \end{center} \vspace{-0.3cm}
\end{figure}

A permutation $t\in\sn$ will be called \deff{allowed for $s$} if $(s,t)\in\A_{1}(n)$.
A triple  of integers $(x,y,z)$ from the set $\lsem1,n\rsem^{3}$ will be said to be \deff{allowed for $s$} if the relation \eqref{eq:stzyx} holds for some $t\in\sn$ and the pair $(s, t)$ generates $\an$ or $\sn$. Herewith, we assume the the number $x$ is less than $y$ and $z$. From what was explained above and from Lemma \ref{lemm:twotypes} follows that a triple $(x,y,z)$ is allowed for the permutation $s_{0}=(1\;2\;\ldots\;n)$ if and only if 
\begin{equation}\label{eq:1xyzn}
1\le x<y<z\le n\quad\textrm{ and }\quad (y-x)\land (z-y)\land (n-z+x)=1,
\end{equation}
since $\set{y-x, z-y, n-z+x}=\set{a,b,c}$. In Figure \ref{fig:onecilinderprim} we illustrated the case that $y-x=a$, $z-y=b$ and $n-z+x=c$. Since $n-z+x= n - (z-y) - (y-x)$, then any common divisor of  the numbers $y-x$, $z-y$, $n-z+x$ divides also $n$, and conversely -- any common divisor of $y-x$, $z-y$, $n$ divides $n-z+x$. Therefore, the condition \eqref{eq:1xyzn} can be re-written as
\begin{equation*}%\label{eq:1xyzn1}
1\le x<y<z\le n\quad\textrm{ and }\quad (y-x)\land (z-y)\land n=1.
\end{equation*}
The number of such triples has been found in Lemma \ref{lemm:onecylinderprim2}.

Analogically to the proof of Proposition \ref{prop:B1}, we obtain
\setlength{\jot}{8pt} % to increase the vertical space between equations
\begin{eqnarray*}%\label{} 
\#\A_{1}(n) &=& \#\sett{(s,t)\in\snsn}{[s,t] = 3\textrm{-cycle},\; s = n\textrm{-cycle},\; \group{s,t}=\an \textrm{ or } \sn}\\
&=& \#\sett{s\in\sn}{ s = n\textrm{-cycle} }\;\times\; \#\sett{t\in\sn}{t \textrm{ is allowed for } s_{0}=(1\;2\;\ldots\;n)} \\
&=& \#\sett{s\in\sn}{ s = n\textrm{-cycle} }\;\times\; |C_{\sn}(s_{0})| \\
&&\qquad\qquad\qquad\times\; \#\sett{ (x,y,z)\in \lsem1,n\rsem^{3} }{\textrm{the triple } (x,y,z) \textrm{ is allowed for } s_{0}} \\
&=& n! \times \bigg( \frac{n}{6}J_{2}(n) - \frac{n}{2} J_{1}(n) \bigg),
\end{eqnarray*}
as required.
\end{proof}

\begin{corr}\label{corr:A1:gen} 
Let $p$ be a prime number. For any $p$-cycle $s\in S_{p}$ and any permutation $t\in S_{p}$ such that the commutator $[s,t]$ is $3$-cycle, the pair $(s,t)$ must generate $A_{p}$ or $S_{p}$. 
\end{corr}
\begin{proof}
Indeed, according to Propositionм \ref{prop:B1} and \ref{prop:A1}
\setlength{\jot}{8pt} % to increase the vertical space between equations
\begin{eqnarray*}%\label{} 
\frac{\#\B(p)}{p!} &=& \frac{p(p-1)(p-2)}{6}\,,\\
\frac{\#\A(p)}{p!} &=& \frac{p}{6} \left( J_{2}(p) - 3J_{1}(p) \right) 
	\;=\; \frac{p}{6} \left( p^{2} - 1 - 3(p-1) \right) 
	\;=\; \frac{p(p-1)(p-2)}{6}\,,
\end{eqnarray*}
\ie the cardinalities of the sets $\A(p)$ and $\B(p)$ are equal. Since $\A(p)\subseteq \B(p)$, these sets coincide, as required.
\end{proof}

\begin{corr}\label{corr:limp1} 
Consider the pairs of permutations from $\sn$ with a $3$-cycle commutator and the first permutation being a cycle of length $n$. Denote by $p_{1}(n)$ the probability that such a pair generates $\an$ or $\sn$\,, \ie  $p_{1}(n) = \#\A_{1}(n) / \#\B_{1}(n)$. Then the lower and the upper limits of this probability as $n\to\infty$ are equal to
\begin{equation}\label{eq:limp1}
   \liminf \,p_{1}(n) = \frac{6}{\pi^{2}}\qquad\textrm{and}\qquad \limsup \,p_{1}(n) =1.
\end{equation}
\end{corr}
\begin{proof}
Due to Corollary \ref{corr:A1:gen}, we know that $\drob{\#\A_{1}(p)}{\#\B_{1}(p)} = 1$ for any prime $p$. Hence, $\limsup p_{1}(n) =1$.

From Propositions \ref{prop:B1} and \ref{prop:A1} follows that
$$
\frac{\#\A_{1}(n)}{\#\B_{1}(n)} = \frac{ \dfrac{n}{6}\big( J_{2}(n) - 3 J_{1}(n) \big)\, n! }{ C_{n}^{3}\, n! } = \frac{ J_{2}(n) - 3J_{1}(n) }{ (n-1)(n-2) } \; \underset{n\to\infty}{\mathlarger{\mathlarger{\sim}}} \; 
\frac{J_{2}(n)}{n^{2}} - \frac{3J_{1}(n)}{n^{2}}.
$$
Jordan's totient function $J_{1}(n)=\phi(n)$ is defined as the number of positive integers not greater than $n$ and coprime with $n$. Thus, the inequality $1\le J_{1}(n)\le n$ holds, from where
$$\lim_{n\to\infty}  \frac{ 3J_{1}(n) }{ n^{2} } = 0 \qquad\textrm{and}\qquad
\liminf p_{1}(n) = \liminf \frac{ J_{2}(n) }{ n^{2} }.$$
According to the formula \eqref{eq:Jk}
$$\frac{J_{2}(n)}{n^{2}} = \prod_{\substack{p | n\\ p\;\textrm{prime}}} \mhs\left(1-\frac{1}{p^{2}}\right),\quad \textrm{ from where }\quad  
\frac{J_{2}(n)}{n^{2}} \ge \prod_{\substack{\textrm{over all}\\ \textrm{prime }p}} \left(1-\frac{1}{p^{2}}\right).$$
We conclude that 
$$ \liminf\frac{J_{2}(n)}{n^{2}} \ge \prod_{\substack{\textrm{over all}\\ \textrm{prime }p}} \left(1-\frac{1}{p^{2}}\right). $$
Let us now show that the equality is achieved for some subsequence of the sequence $1,2,3,\ldots$ of positive integers. Let $p_{k}$ denote the $k^{\textrm{th}}$ prime number ($p_{1}=2$, $p_{2}=3$, $p_{3}=5$ and so on) and take $a_{k} = p_{1}p_{2}\cdots p_{k}$. Then
$$\lim_{k\to\infty}  \frac{ J_{2}(a_{k}) }{ a_{k}^{2} } \;=\; \lim_{k\to\infty} \prod_{i=1}^{k} \left(1-\frac{1}{p_{i}^{2}}\right) \;= \prod_{i=1}^{\infty} \left(1-\frac{1}{p_{i}^{2}}\right).$$
The infinite product that we got here is equal to $\drob{6}{\pi^{2}}$ by the formula \eqref{eq:6pi2}. Therefore, 
$$\liminf p_{1}(n) = \liminf \frac{ J_{2}(n) }{ n^{2} } =  \frac{6}{\pi^{2}}\,,$$
as required.
\end{proof}

%%%%%%%%%%%%%%%%%%%%%%%%%%%%%%%%%%%%%%%%%%%%%%
\section{The case of an arbitrary cycle}
Introduce the notation:
\setlength{\jot}{10pt} % to increase the vertical space between equations
\begin{eqnarray*}%\label{} 
\A_{2}(n) &=& \sett{(s,t)\in\snsn}{[s,t] \textrm{ is a } 3\textrm{-cycle},\; s \textrm{ is an arbitrary cycle},\; \group{s,t} = \an \textrm{ or } \sn},\\
\B_{2}(n) &=& \sett{(s,t)\in\snsn}{[s,t] \textrm{ is a } 3\textrm{-cycle},\; s \textrm{ is an arbitrary cycle}}.
\end{eqnarray*}

%%%%%%%%%%%%%%%%%%%%%%%%%%%%%%%%%%%%%%%%%%%%%%
\subsection{A formula for $\#\B_{2}(n)$}
\begin{prop}\label{prop:B2}  
The number of pairs of permutations $(s,t)\in \snsn$ with a $3$-cycle commutator and  $s$ being an arbitrary cycle, is equal to
\begin{equation}%\label{eq:}
   \#\B_{2}(n) = \frac{1}{24}(n-1)(n-2)(n^{2}+5n+12)\, n!
\end{equation}
for any natural $n>2$.
\end{prop}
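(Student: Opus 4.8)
The plan is to mimic the proof of Proposition \ref{prop:B1}, replacing the fixed $n$-cycle by a single cycle of arbitrary length $m$ and then summing over $m$. For a fixed cycle $s$ of length $m$ (moving an $m$-element subset and fixing the remaining $n-m$ points), the pairs $(s,t)\in\B_{2}(n)$ with this first coordinate are governed, exactly as before, by the equivalent form $t s t\inv = (x\;y\;z)\,s$ of the relation $[s,t]=(z\;y\;x)$. For each $3$-cycle $\gamma$ that can occur as $[s,t]$ — equivalently, each $\gamma$ for which $\gamma\inv s$ is again of cycle type $(m,1^{\,n-m})$ — the admissible $t$ form a single coset of the centralizer $C_{\sn}(s)$, hence number exactly $|C_{\sn}(s)|$. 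Thus the count of admissible $t$ is $|C_{\sn}(s)|$ times the number $T(m)$ of admissible $3$-cycles; grouping the sum over all single cycles by their length $m$ and using that $(\text{number of }m\text{-cycles})\times|C_{\sn}(s)| = |\sn| = n!$, one gets
$$\#\B_{2}(n) = n!\sum_{m=2}^{n} T(m).$$

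The heart of the argument is the evaluation of $T(m)$, for which I would invoke Lemma \ref{lemm:twotypes}. Its two alternatives translate, for a single cycle $s$, into: (a) all three moved points of $\gamma$ lie in the $m$-cycle and occur there in cyclic order; or (b) exactly two of them lie in the $m$-cycle while the third is a fixed point of $s$. In case (a) the count is $C_{m}^{3}$, precisely as in Proposition \ref{prop:B1}: each $3$-element subset of the $m$-cycle admits just one of its two cyclic orientations as an admissible $\gamma$. In case (b) the lone point sits in a cycle of length $1$, so the distance condition $\dd_{s}(z,z)=\dd_{s}(y,x)$ of the lemma forces the two $m$-cycle points to be consecutive; an admissible $3$-cycle is therefore $(w\;u\;s(u))$ for a directed edge $u\mapsto s(u)$ of the $m$-cycle ($m$ choices) and a fixed point $w$ ($n-m$ choices), and one verifies directly that $\gamma\inv s$ does have the right cycle type. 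This yields $T(m)=C_{m}^{3}+m(n-m)$.

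It then remains to sum. By the hockey-stick identity $\sum_{m=2}^{n}C_{m}^{3} = C_{n+1}^{4} = \tfrac{1}{24}(n+1)n(n-1)(n-2)$, while a short computation gives $\sum_{m=2}^{n} m(n-m) = \tfrac{1}{6}(n-1)(n-2)(n+3)$. Pulling out the common factor $\tfrac{1}{24}(n-1)(n-2)$ collapses the two contributions:
$$\sum_{m=2}^{n}T(m) = \frac{(n-1)(n-2)}{24}\Big[(n+1)n + 4(n+3)\Big] = \frac{(n-1)(n-2)(n^{2}+5n+12)}{24},$$
which is exactly the asserted value of $\#\B_{2}(n)$ after multiplication by $n!$. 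As a consistency check, the top term $m=n$ reproduces $C_{n}^{3}$, recovering $\#\B_{1}(n)$.

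The step I expect to be delicate is the case-(b) enumeration. One must observe that the ``apart'' point is necessarily a fixed point — so that its cycle has length $1$, which is what pins the other two points to adjacent positions — and that the three a priori distinct labellings, according to which of the three points of $\gamma$ is the apart one, all produce the \emph{same} $3$-cycle $(w\;u\;s(u))$. This collapse is what avoids an over-count and delivers the clean factor $m(n-m)$ instead of three times as much; establishing genuine realizability (that each such $\gamma$ really satisfies $\gamma\inv s\sim s$, not merely the necessary conditions of the lemma) is the companion verification.
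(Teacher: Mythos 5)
Your proposal is correct and takes essentially the same route as the paper's own proof: for each cycle length $m$ you apply Lemma \ref{lemm:twotypes} to get exactly $C_{m}^{3}+m(n-m)$ allowed triples (the paper's cases (a) and (b), including the observation that the apart point must be a fixed point, forcing the two in-cycle points to be $s$-consecutive), use the orbit--stabilizer identity $\#\{m\text{-cycles}\}\times|C_{\sn}(s_{0})|=n!$, and sum over $m\in\lsem 2,n\rsem$. The only differences are cosmetic bookkeeping: you parametrize case (b) by directed edges and fixed points and evaluate $\sum_{m}C_{m}^{3}$ via the hockey-stick identity, where the paper counts normalized triples $(x,y,z)$ and invokes its formula \eqref{eq:kk1} with $r=2$ together with \eqref{eq:knk}.
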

\begin{proof} 
We will need Lemma \nolinebreak \ref{lemm:twotypes} and the formulas \eqref{eq:knk} and \eqref{eq:kk1}. 

Consider an arbitrary $k$-cycle $s\in\sn$\,, where $k\in\lsem 2, n\rsem$. A permutation $t\in\sn$ will be called \deff{allowed for $s$} if 
\begin{equation}\label{eq:stzyx1}
  [s,t] = (z\;y\;x), \quad \textrm{\ie}\quad t s t\inv = (x\;y\;z) s.
\end{equation}
A triple $(x,y,z)$ of integers from the set $\lsem1,n\rsem^{3}$ will be called \deff{allowed for $s$} if the relation \eqref{eq:stzyx1} holds for some $t\in\sn$. Herewith, we assume that the number $z$ is \emph{greater} than $y$ and $x$. From Lemma \ref{lemm:twotypes} follows that a triple $(x,y,z)$ is allowed for the permutation $s_{0}=(1\;2\;\ldots\;k)$ if and only if one of the next  situations occurs:
\begin{itemize}
\item[(a)] The three numbers $x, y, z$ lie in the cycle $(1\;2\;\ldots\;k)$ in the given order, \ie $1\le x<y<z\le k$ (\cf Figure \ref{fig:onecilinderB2}). There are exactly $C_{k}^{3}$ such triples.
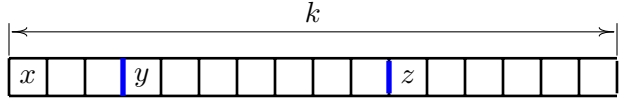
\begin{figure}[htbp]
   \begin{center}
   \begin{tikzpicture}[scale=.5, >=latex,shorten >=1pt,shorten <=1pt,line width=1pt,bend angle=20, circ/.style={circle,inner sep=2pt,draw=black!40,rounded corners, text centered}]
        
      	\draw[step=1] (0,0) grid +(16,1);
	\draw (0,0)+(.5,.5) node {$x$}; 
	\draw (3,0)+(.5,.5) node {$y$}; 
	\draw (10,0)+(.5,.5) node {$z$}; 
	
      	\draw[line width=2pt,blue] (3,0) -- ++(0,1) (10,0) -- ++(0,1);

	\path[<->,thin,>=arcs]  (0,+1.7) edge node[above] {$k$} ++ (16,0);
	\foreach \x in {0,16}
		\draw[thin] (\x,1) -- ++(0,1);
   \end{tikzpicture}
   \caption{An allowed triple of integers $1\le x<y<z\le k$.}
   \label{fig:onecilinderB2}
   \end{center} \vspace{-0.3cm}
\end{figure}

\item[(b)] Two numbers $x, y$ lie in the cycle $(1\;2\;\ldots\;k)$ and $z\in \lsem k+1,n\rsem$. Moreover, $x = y+1 \mmod k$ (\cf Figure \ref{fig:twocilinderB2}). There are exactly $k(n-k)$ such triples, since $y$  can be chosen in $k$ ways and $z$ can be chosen in $n-k$ ways.
\begin{figure}[htbp]
   \begin{center}
   \begin{tikzpicture}[scale=.5, >=latex,shorten >=1pt,shorten <=1pt,line width=1pt,bend angle=20, circ/.style={circle,inner sep=2pt,draw=black!40,rounded corners, text centered}]
        
      	\draw[step=1] (0,0) grid +(10,1);
      	\draw[step=1] (0,-6) grid +(1,6);
	\draw (0,0)+(.5,.4) node {$y$}; 
	\draw (1,0)+(.5,.5) node {$x$}; 
	\draw (0,-6)+(.5,.5) node {$z$}; 
	
      	\draw[line width=2pt,blue] (1,0) -- ++(0,1);

	\path[<->,thin,>=arcs]  (0,1.7) edge node[above] {$k$} ++ (10,0)
						(-.7,-6) edge node[left] {$\le n-k$} ++ (0,6);
	\foreach \x in {0,10}
		\draw[thin] (\x,1) -- ++(0,1);
	\foreach \y in {0,-6}
		\draw[thin] (0,\y) -- ++(-1,0);
   \end{tikzpicture}
   \caption{An allowed triple of integers $x, y\in \lsem 1, k\rsem$ and $z\in \lsem k+1,n\rsem$, where $x = y+1 \mmod k$.}
   \label{fig:twocilinderB2}
   \end{center} \vspace{-0.3cm}
\end{figure}
\end{itemize}

We conclude that there are $C_{k}^{3} + k(n-k)$ allowed triples for the $k$-cycle $s_{0}=(1\;2\;\ldots\;k)$.

By analogy with the proof of Proposition \ref{prop:B1}, we get
\setlength{\jot}{8pt} % to increase the vertical space between equations
\begin{eqnarray*}%\label{} 
\#\B_{2}(n) &=& \sum_{k=2}^{n} \#\sett{(s,t)\in\snsn}{[s,t] = 3\textrm{-cycle},\; s = k\textrm{-cycle}}\\
&=& \sum_{k=2}^{n} \#\sett{s\in\sn}{ s = k\textrm{-cycle} }\;\times\; \#\sett{t\in\sn}{t \textrm{ is allowed for } s_{0}=(1\;2\;\ldots\;k)} \\
\end{eqnarray*}
\begin{eqnarray*}%\label{} 
\phantom{\#\B_{2}} &=& \sum_{k=2}^{n} \#\sett{s\in\sn}{ s = k\textrm{-cycle} }\;\times\; |C_{\sn}(s_{0})| \\
& &\qquad\qquad\qquad\times\; \#\sett{ (x,y,z)\in \lsem 1,n\rsem^{3} }{\textrm{the triple } (x,y,z) \textrm{ is allowed for } s_{0}} \\
&=& \sum_{k=2}^{n} n! \times \big(C_{k}^{3} + k(n-k)\big) = \dfrac{n!}{6} \sum_{k=2}^{n} k(k-1)(k-2) \;+\; n! \sum_{k=2}^{n} k(n-k).
\end{eqnarray*}
Finally, we apply the formula \eqref{eq:kk1} with $r=2$ and the formula \eqref{eq:knk}:
\begin{eqnarray*}%\label{} 
\#\B_{2}(n) &=&  \frac{n!}{6}\cdot \dfrac{(n+1)n(n-1)(n-2)}{4} \;+\; n!\cdot \dfrac{(n+3)(n-1)(n-2)}{6} \\
&=& \frac{n!}{24}(n-1)(n-2)\big( (n+1)n \;+\; 4(n+3)\big) \\
&=& \frac{1}{24}(n-1)(n-2)(n^{2}+5n+12)\, n!\,,
\end{eqnarray*}
as required.
\end{proof}

%%%%%%%%%%%%%%%%%%%%%%%%%%%%%%%%%%%%%%%%%%%%%%
\subsection{A formula for $\#\A_{2}(n)$}
\begin{prop}\label{prop:A2}  
%Suppose that pair of permutations $(s,t)\in \snsn$ generates $A_{n}$or $S_{n}$, причем the commutator $[s,t]$ is $3$-cycleом, and $s$ -- someм cycleом. Then permutation $s$ обязательно is cycleом максимальной of length. In other words,
The number of pairs of permutations $(s,t)\in \snsn$ with a $3$-cycle commutator, the permutation $s$ being an arbitrary cycle and which generate $A_{n}$ or $S_{n}$\,, is equal to
\begin{equation}%\label{eq:}
   \#\A_{2}(n) =  \frac{n}{6}\bigg( J_{2}(n) - 3 J_{1}(n) \bigg)\, n! \;+\; \frac{(n+1)(n-2)}{2}\, n! %\#\A_{1}(n)
\end{equation}
for any natural $n>2$.
\end{prop}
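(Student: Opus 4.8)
The plan is to follow the template of Propositions~\ref{prop:B1}, \ref{prop:A1} and \ref{prop:B2}. First I would partition $\A_2(n)$ according to the length $m\in\lsem 2,n\rsem$ of the cycle $s$, and for each $m$ reduce to a single representative $s_0=(1\;2\;\ldots\;m)$ with fixed-point set $F=\lsem m+1,n\rsem$. By the orbit--stabilizer argument already used for $\B_1$ and $\B_2$, the contribution of length $m$ equals $\#\{m\text{-cycles}\}$ times the number of $t\in\sn$ for which $ts_0t\inv=(x\;y\;z)s_0$ for some $3$-cycle $(z\;y\;x)$ and $\group{s_0,t}$ is primitive; recall that $\#\{m\text{-cycles}\}\cdot|C_{\sn}(s_0)|=n!$ with $|C_{\sn}(s_0)|=m\,(n-m)!$. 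Here I use Jordan's theorem (Proposition~\ref{prop:jordan}) to replace ``$\group{s,t}=\an$ or $\sn$'' by ``$\group{s,t}$ primitive''.

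Next I would invoke Lemma~\ref{lemm:twotypes} to split the admissible triples into types~(a) and~(b). When $m=n$ there are no fixed points, only type~(a) occurs, and this part is precisely $\#\A_1(n)$ from Proposition~\ref{prop:A1}, contributing the first summand $\frac{n}{6}\big(J_2(n)-3J_1(n)\big)\,n!$. For $2\le m<n$ I would first discard type~(a): if $x,y,z$ all lie in the $m$-cycle, then $(x\;y\;z)s_0$ is again supported on $\lsem 1,m\rsem$, so $t$ preserves $\supp(s_0)$; the group $\group{s_0,t}$ then preserves the partition $\{\supp(s_0),F\}$, is intransitive, and cannot be primitive. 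Hence for $m<n$ only type~(b) contributes.

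For type~(b) one has $z\in F$ and $x=s_0(y)$, giving $m(n-m)$ triples, as in Proposition~\ref{prop:B2}. The crux is to count, among the $m(n-m)!$ solutions $t$ of $ts_0t\inv=(x\;y\;z)s_0$, those generating a primitive group. I would verify directly that the transposition $t_0=(y\;z)$ is one solution, so that every solution is $t=t_0c$ with $c\in C_{\sn}(s_0)=\group{s_0}\times\mathrm{Sym}(F)$ and $\group{s_0,t}=\group{s_0,(y\;z)\pi}$, where $\pi$ is the $\mathrm{Sym}(F)$-component of $c$; in particular primitivity depends only on $\pi$. A short orbit computation then shows that $(y\;z)\pi$ is a single $(n-m+1)$-cycle on $\{y\}\cup F$ exactly when $\pi$ is an $(n-m)$-cycle on $F$, and otherwise has an orbit contained in $F$ and disjoint from $\supp(s_0)$, so that $\group{s_0,t}$ is intransitive.

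The main obstacle, and the genuinely new ingredient beyond the one-cylinder analysis, is the primitivity statement in the surviving case: when $\pi$ is an $(n-m)$-cycle, $\group{s_0,t}$ is generated by the two cycles $s_0$ (on $\lsem 1,m\rsem$) and $t$ (on $\{y\}\cup F$), which meet in the single point $y$ and together cover $\lsem 1,n\rsem$. I would establish the general fact that such a pair always generates a primitive group: given a block $\Delta\ni y$ with $2\le|\Delta|<n$, and using that $s_0$ fixes $F$ pointwise while $t$ fixes $\lsem 1,m\rsem\setminus\{y\}$ pointwise, one shows that $\Delta$ is invariant under whichever of the two generators fixes a second point of $\Delta$; this forces $\Delta$ to contain an entire cycle, and then the other generator forces $\Delta=\lsem 1,n\rsem$, a contradiction. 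Consequently primitivity is equivalent to transitivity in this case, exactly the $(n-m-1)!$ cyclic choices of $\pi$ survive, each of the $m(n-m)$ triples yields $m(n-m-1)!$ admissible $t$, and the length-$m$ contribution equals $\frac{n!}{m(n-m)!}\cdot m(n-m)\cdot m(n-m-1)!=m\,n!$. Summing gives $\sum_{m=2}^{n-1}m\,n!=\frac{(n+1)(n-2)}{2}\,n!$, which is the second summand, completing the proof.
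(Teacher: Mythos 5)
Your proof is correct, and its counting skeleton coincides with the paper's: decompose $\A_{2}(n)$ by the cycle length $m$, absorb $m=n$ into Proposition \ref{prop:A1}, show that for $m<n$ only type (b) of Lemma \ref{lemm:twotypes} contributes with $m(n-m)$ triples, count $m(n-m-1)!$ admissible $t$ per triple, obtain $m\,n!$ per length, and sum to $\frac{(n+1)(n-2)}{2}\,n!$. Where you genuinely diverge is in characterizing the admissible $t$ and proving generation. The paper reads the solutions $t=(y\;z)\,u\,s_{0}^{\be}$ off the two-cylinder origami $O(s_{0},t)$ (connectedness forcing the $(n-\ell)$-cycle $u$), and then derives generation by noting that $s_{0}ts_{0}^{-\be}$ is an $n$-cycle, so that after renumbering the pair contains $(1\;2\;\dots\;\ell)$ and $(1\;2\;\dots\;n)$ and Proposition \ref{prop:isaacs} applies; Jordan's theorem enters only through that citation. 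You instead (i) parametrize all solutions of $ts_{0}t\inv=(x\;y\;z)s_{0}$ algebraically as the coset $t_{0}\,C_{\sn}(s_{0})$ with $t_{0}=(y\;z)$ and $C_{\sn}(s_{0})=\group{s_{0}}\times Sym(F)$ — a cleaner and fully rigorous substitute for the paper's appeal to the picture; (ii) isolate transitivity by the cycle computation on $(y\;z)\pi$, which is correct as stated (an orbit of $\pi$ in $F$ avoiding $z$ survives as a $\group{s_{0},t}$-invariant set); and (iii) prove primitivity of a group generated by two cycles meeting in exactly one point by a direct block argument via Lemma \ref{lem:primitive}: a nontrivial block through $y$ containing a second point is stabilized by whichever generator fixes that point, hence contains that generator's whole cycle, hence — since the other generator then also fixes a point of the block — everything; you then conclude with Jordan's theorem (Proposition \ref{prop:jordan}). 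In effect you inline a proof of the Isaacs--Zieschang statement rather than citing it, making the argument self-contained and free of the geometric apparatus; the paper's route is shorter and exhibits the two-cylinder structure that is reused in Theorem \ref{th:A}. You also make explicit a point the paper glosses over: for $m<n$ a type (a) triple forces $t$ to preserve $\supp(s_{0})$, so $\group{s_{0},t}$ is intransitive and such triples can indeed be discarded.
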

\begin{proof} 
We will use Proposition \ref{prop:A1}, Jordan's theorem (Proposition \ref{prop:jordan}), Lemma \nolinebreak \ref{lemm:twotypes} and Proposition \ref{prop:isaacs}.

From Proposition \ref{prop:A1} we know that, in the case where $s$ is an $n$-cycle, the number of required pairs is equal to 
$$\#\A_{1}(n) = \frac{n}{6}\bigg( J_{2}(n) - 3 J_{1}(n) \bigg)\, n!\,.$$

Consider now an arbitrary $\ell$-cycle $s\in\sn$ with $2\le \ell<n$. A permutation $t\in\sn$ will be called \deff{allowed for $s$} if the pair $(s,t)$ generates the group $\an$ or $\sn$ and the commutator $[s,t]$ is a $3$-cycle.

A triple of integers $(x,y,z)$ from the set $\lsem1,n\rsem^{3}$ will be called \deff{allowed for $s$} if there exists an allowed permutation $t\in\sn$ for which $[s,t] = (z\;y\;x)$.  Herewith, we assume that the number $z$ is \emph{greater} than $y$ and $x$. 

From Lemma \ref{lemm:twotypes} follows that a triple $(x,y,z)$ is allowed for the permutation $s_{0}=(1\;2\;\ldots\;\ell)$ if and only if  
 $x$ and $y$ lie in the cycle $(1\;2\;\ldots\;\ell)$\; and\; $z\in \lsem \ell+1,n\rsem$ with condition that $x = y+1 \mmod \ell$ (\cf Figure \ref{fig:twocilinderA2}). There are exactly $\ell(n-\ell)$ such triples, since $y$ can be chosen in $\ell$ ways  and $z$ can be chosen in $n-\ell$ ways.
\begin{figure}[htbp]
   \begin{center}
   \begin{tikzpicture}[scale=.5, >=latex,shorten >=1pt,shorten <=1pt,line width=1pt,bend angle=20, 
   		circ/.style={circle,inner sep=2pt,draw=black!40,rounded corners, text centered},
   		tblue/.style={dashed,blue,rounded corners}]
        
      	\draw[step=1] (0,0) grid +(10,1);
      	\draw[step=1] (0,-6) grid +(1,6);
	\draw (0,0)+(.5,.4) node {$y$}; 
	\draw (1,0)+(.5,.5) node {$x$}; 
	\draw (0,-6)+(.5,.5) node {$z$}; 
	
      	\draw[line width=2pt,blue] (1,0) -- ++(0,1);

	\path[<->,thin,>=arcs]  (0,1.7) edge node[above] {$\ell$} ++ (10,0)
						(-.7,-6) edge node[left] {$n-\ell$} ++ (0,6);
	\foreach \x in {0,10}
		\draw[thin] (\x,1) -- ++(0,1);
	\foreach \y in {0,-6}
		\draw[thin] (0,\y) -- ++(-1,0);

	\begin{scope}[xshift=13cm]        
      	\draw[step=1] (0,0) grid +(10,1);
      	\draw[step=1] (0,-6) grid +(1,6);
	\draw (0,0)+(.5,.4) node {$y$}; 
	\draw (1,0)+(.5,.5) node {$x$}; 
	\draw (0,-6)+(.5,.5) node {$z$}; 
	
      	\draw[line width=2pt,blue] (1,0) -- ++(0,1);

	\path[<->,thin,>=arcs,blue]  (0,1.7) edge node[above] {$\be$} ++ (4,0)
						(4,1.7) edge node[above] {$\ell-\be$} ++ (6,0)
						(0,-6.7) edge node[below] {$\ell-\be$} ++ (6,0)
						(6,-6.7) edge node[below] {$\be$} ++ (4,0);
	\foreach \x in {0,4,10}
		\draw[thin,blue] (\x,1) -- ++(0,1);
	\foreach \x in {0,6,10}
		\draw[thin,blue] (\x,-6) -- ++(0,-1);
   	\end{scope}
   \end{tikzpicture}

   \caption{An allowed triple of integers $x, y\in \lsem 1, \ell\rsem$ and $z\in \lsem \ell+1,n\rsem$, where $x = y+1$ mod $\ell$.}
   \label{fig:twocilinderA2}
   \end{center} \vspace{-0.3cm}
\end{figure}

For a fixed $\ell$-cycle $s_{0}\in\sn$ and a fixed triple $(x,y,z)\in\lsem 1,n \rsem^{3}$, let us find the number of allowed permutations $t\in\sn$ such that 
\begin{equation}\label{eq:stzyxA2}
  [s_{0},t] = (z\;y\;x), \quad \textrm{\ie}\quad t s_{0} t\inv = (x\;y\;z) s_{0}\,.
\end{equation}
Any such permutation $t$ can be expressed as
\begin{equation}\label{eq:tyz}
   t=(y\;z) \cdot u\cdot s_{0}^{\be},
\end{equation}
where $u$ is some $\medmu (n-\ell)$-cycle containing all integers from $\ell+1$ to $n$, and the parameter $\be\in\lsem 0,\ell-1\rsem$ is a twist (\cf also Figure.\ \ref{fig:twocylinder}). Indeed, since the group $\group{s,t}$ is transitive, then the two-cylinder origami $O(s,t)$ given in Figure \ref{fig:twocilinderA2} consists of $n$ squares, and so $t=(y\;z) \,u\, s_{0}^{\be}$. Conversely: if \,$t=(y\;z) \,u\, s_{0}^{\be}$\, then the pair $(s_{0},t)$ generates the group $\an$ or $\sn$\,. This follows from Proposition \ref{prop:isaacs}, since
$$s_{0} = (y+1\;\ldots\;\ell\;\;1\;\ldots\;y)\quad\textrm{ and }\quad  s_{0} t s_{0}^{-\be} = s_{0} (y\;z\;z_{2}\;\ldots\;z_{n-\ell}) = (y+1\;\ldots\;\ell\;\;1\;\ldots\;y\;z\;z_{2}\;\ldots\;z_{n-\ell}),$$
where $u = (z\;z_{2}\;\ldots\;z_{n-\ell})$. In other words, after renumbering the points from $1$ to $n$, we will get $s_{0}=(1\;2\;\dots\;\ell)$ and $t=(1\;2\;\dots\;n)$.

The number of permutations $t$ of  form \eqref{eq:tyz} is equal to 
$$\frac{(n-\ell)!}{n-\ell} \cdot \ell \,,$$
since there are $\frac{1}{n-\ell} \medmu (n-\ell)!$ ways to choose an $\medmu (n-\ell)$-cycle $u$ and $\ell$ ways to choose $\be$.

\vs
We conclude that
\setlength{\jot}{8pt} % to increase the vertical space between equations
\begin{eqnarray*}%\label{} 
\#\A_{2}(n) &=& \#\A_{1}(n) + \sum_{\ell=2}^{n-1}\#\sett{(s,t)\in\snsn}{[s,t] = 3\textrm{-cycle},\; s = \ell\textrm{-cycle},\; \group{s,t}=\an \textrm{ or } \sn}\\
\phantom{\#\A_{2}(n)} &=&  \#\A_{1}(n) + \sum_{\ell=2}^{n-1}\#\sett{s\in\sn}{ s = \ell\textrm{-cycle} }\;\times\; \#\sett{t\in\sn}{t \textrm{ is allowed for } s_{0}=(1\;2\;\ldots\;\ell)} \\
\end{eqnarray*}
\vspace{-1cm}
\begin{eqnarray*}%\label{} 
\phantom{\#\A_{2}} &=& \#\A_{1}(n) + \sum_{\ell=2}^{n-1}\#\sett{s\in\sn}{ s = \ell\textrm{-cycle} }\;\times \\
&&\qquad\qquad\times\; \frac{(n-\ell)!}{n-\ell} \cdot \ell \;\times\; \#\sett{ (x,y,z)\in \lsem1,n\rsem^{3} }{\textrm{the triple } (x,y,z) \textrm{ is allowed for } s_{0}} \\
&=& \#\A_{1}(n) + \sum_{\ell=2}^{n-1} \frac{1}{\ell}\cdot\frac{n!}{(n-\ell)!} \;\times\; \frac{(n-\ell)!}{n-\ell} \cdot \ell \;\times\; \ell (n-\ell) \;=\;  \#\A_{1}(n) + \sum_{\ell=2}^{n-1} \ell n!\\
&=& \frac{n}{6}\bigg( J_{2}(n) - 3 J_{1}(n) \bigg)\, n! \;+\; \frac{(n+1)(n-2)}{2}\, n!\,,
\end{eqnarray*}
as required.
\end{proof}
\begin{corr}\label{corr:limp2} 
Consider the pairs of permutations from $\sn$ with a $3$-cycle commutator and the first permutation being an arbitrary cycle. Denote by $p_{2}(n)$ the probability that such a pair generates $\an$ or $\sn$\,, \ie  $p_{2}(n) = \#\A_{2}(n) / \#\B_{2}(n)$. Then one has the following lower and upper limits as $n\to\infty$
\begin{equation}\label{eq:limp1}
   \liminf \;n\cdot p_{2}(n) = \frac{24}{\pi^{2}}\qquad\textrm{and}\qquad \limsup \;n\cdot p_{2}(n) =4.
\end{equation}
\end{corr}
\begin{proof}
From Propositions \ref{prop:B2} and \ref{prop:A2} follows that
\setlength{\jot}{10pt} % to increase the vertical space between equations
\begin{eqnarray*}%\label{} 
n\cdot \frac{\#\A_{2}(n)}{\#\B_{2}(n)} &=& n\cdot \frac{ \dfrac{n}{6}\bigg( J_{2}(n) - 3 J_{1}(n) \bigg)\, n! \;+\; \dfrac{(n+1)(n-2)}{2}\, n! }{ \dfrac{1}{24}(n-1)(n-2)(n^{2}+5n+12)\, n! } \\
&=&  \frac{ 4n^{2} J_{2}(n) - 12n^{2}J_{1}(n) + 12n(n+1)(n-2) }{ (n-1)(n-2)(n^{2}+5n+12) }
\end{eqnarray*}
\setlength{\jot}{8pt} % to increase the vertical space between equations
$$\underset{n\to\infty}{\mathlarger{\mathlarger{\sim}}} \; \frac{4 J_{2}(n)}{n^{2}} - \frac{12 J_{1}(n)}{n^{2}} + \frac{12}{n}. \hspace{1.5cm}$$
Since $\dfrac{12 J_{1}(n)}{n^{2}} \to 0$ \;and\; $\dfrac{12}{n} \to 0$ \;as $n\to\infty$, then similarly to the proof of Corollary \ref{corr:limp1} we obtain
$$\liminf \;n\cdot p_{2}(n) = \liminf \frac{ 4J_{2}(n) }{ n^{2} } =  \frac{24}{\pi^{2}}\qquad\textrm{and}\qquad \limsup \;n\cdot p_{2}(n) =\limsup \frac{ 4 J_{2}(n) }{ n^{2} } =4,$$
as required.
\end{proof}

%%%%%%%%%%%%%%%%%%%%%%%%%%%%%%%%%%%%%%%%%%%%%%
\section{The case of an arbitrary permutation}
Introduce the following notation:
\setlength{\jot}{10pt} % to increase the vertical space between equations
\begin{eqnarray*}%\label{} 
\A(n) &=& \sett{(s,t)\in\snsn}{[s,t] \textrm{ is a } 3\textrm{-cycle},\; \group{s,t} = \an \textrm{ or } \sn},\\
\B(n) &=& \sett{(s,t)\in\snsn}{[s,t] \textrm{ is a } 3\textrm{-cycle}}.
\end{eqnarray*}

%%%%%%%%%%%%%%%%%%%%%%%%%%%%%%%%%%%%%%%%%%%%%%
\subsection{The number of all pairs: a formula for $\#\B(n)$}
\begin{theo}\label{th:B}  
The number of pairs of permutations from $S_{n}$ with a $3$-cycle commutator, is equal to
\begin{equation}\label{eq:B:formula}
   \#\B(n) = \frac{3}{8} \bigg(\sum_{k=1}^{n}\si_{3}(k) P(n-k) - 2\sum_{k=1}^{n} k\si_{1}(k)P(n-k) + n P(n)\bigg) \cdot n!
\end{equation}
for any natural $n>2$.
\end{theo}
\begin{proof} 
We are going to apply Lemma \ref{lemm:twotypes}, Ramanujan's formula (Proposition \ref{prop:ramanujan}) and Proposition \ref{prop:sidcP}.

Denote by $\PP(n)$ the set of partitions of a positive integer $n$:
\begin{equation}%\label{eq:}
   \PP(n) = \sett{(a_{1},\ldots,a_{r})\in \N^{r}}{r\in\N,\quad a_{1}\le\ldots\le a_{r}\in\N,\quad a_{1}+\dots + a_{r}=n}.
\end{equation}
Consider an arbitrary permutation $s\in\sn$ with flag $a_{1}\le \ldots\le a_{r}$\,, where
\begin{equation}\label{eq:partitionB}
   a_{1} +  \dots + a_{r} = n
\end{equation}
(\cf the section \ref{sec:permutations} of Appendix). 
A permutation $t\in\sn$ will be called \deff{allowed for $s$} if 
\begin{equation}\label{eq:stzyxB}
  [s,t] = (z\;y\;x), \quad \textrm{\ie}\quad t s t\inv = (x\;y\;z) s.
\end{equation}
A triple $(x,y,z)$ from the set $\lsem1,n\rsem^{3}$ will be said to be \deff{allowed for $s$}, if the relation \eqref{eq:stzyxB} holds for some allowed $t\in\sn$. Herewith, we assume that the integer $z$ is \emph{greater} than $y$ and $x$. 
According to Lemma \ref{lemm:twotypes}, one of the two following situations occurs: 
\begin{itemize}
\item[(a)] All three integers $x, y, z$ lie in the same cycle of the permutation $s$ in the given order. There are exactly $C_{a_{1}}^{3} + \dots + C_{a_{r}}^{3}$\, such triples, where $C_{1}^{3}=C_{2}^{3}=0$. In this case, the triple $(x,y,z)\in\lsem1,n\rsem^{3}$ and the permutation $t\in\sn$ with condition \eqref{eq:stzyxB} will be called \deff{allowed for $s$ of first kind}. Let us find the number of pairs $(s,t)\in\snsn$ when $t$ is allowed of first kind:
\setlength{\jot}{8pt} % to increase the vertical space between equations
\begin{eqnarray*}%\label{} 
f(n) &:=& \hspace{-.6cm} \sum_{(a_{1},\ldots,a_{r})\in \PP(n)}\hspace{-.6cm} \#\sett{(s,t)\in\snsn}{ \flag{s} = a_{1},\ldots,a_{r}\,, \;\; t \textrm{ is allowed for } s \textrm{ of first kind}}\\
&=& \hspace{-.6cm} \sum_{(a_{1},\ldots,a_{r})\in \PP(n)}\hspace{-.6cm} \#\sett{s\in\sn}{ \flag{s} = a_{1},\ldots,a_{r} }\\
&& \hspace{4cm}\times\; \#\sett{t\in\sn}{ t \textrm{ is allowed for } s_{0} \textrm{ of first kind}},
\end{eqnarray*}
where $\medmu s_{0}=(1\;2\;\ldots\;a_{1})(a_{1}+1\;\;a_{1}+2\;\;\ldots\;\;a_{1}+a_{2})\cdots (a_{1}+\cdots +a_{r-1}+1\;\;a_{1}+\cdots +a_{r-1}+2\;\;\ldots\;\;a_{1}+\cdots+a_{r})$, and  $\flag{s}$ denotes the flag of the permutation (\cf Appendix \ref{sec:permutations}).
\begin{eqnarray*}%\label{} 
f(n) &=& \hspace{-.6cm} \sum_{(a_{1},\ldots,a_{r})\in \PP(n)}\hspace{-.6cm} \#\sett{s\in\sn}{ \flag{s} = a_{1},\ldots,a_{r} }\;\times\; |C_{\sn}(s_{0})| \\
& &\qquad\qquad\times\; \#\sett{ (x,y,z)\in \lsem 1,n\rsem^{3} }{\textrm{the triple } (x,y,z) \textrm{ is allowed for } s_{0} \textrm{ of first kind}} \\
&=& \hspace{-.6cm} \sum_{(a_{1},\ldots,a_{r})\in \PP(n)}\hspace{-.6cm} n! \;\times\; \big( C_{a_{1}}^{3} + \dots + C_{a_{r}}^{3} \big) 
\;=\; n! \hspace{-.6cm} \sum_{(a_{1},\ldots,a_{r})\in \PP(n)}\hspace{-.6cm} \big( C_{a_{1}}^{3} + \dots + C_{a_{r}}^{3} \big).
\end{eqnarray*}
In order to simplify the last sum, we will proceed as follows. If we run over all partitions \eqref{eq:partitionB} of the positive integer $n$ and count the number of ones, then we will get
$$P(n-1) + P(n-2) + \dots + P(1) + P(0),$$
where $P(n)$ is the partition function (\cf the section \ref{sec:partitions}). Indeed, there are $P(n-1)$ partitions with at least one `1' (\ie $a_{1}=1$), exactly $P(n-2)$ partitions with at least two `1's (\ie $a_{1}=1$ and $a_{2}=1$), and so on, exactly $P(0)=1$ partitions with $n$ ones. Analogically, the general statement is true: the number of parts of length $d$ in the partitions of the integer $n$, is equal to
$$P(n-d) + P(n-2d) + \dots + P(n - m d), \quad \textrm{ where } m = \left[\frac{n}{d}\right].$$
Thus,
\begin{eqnarray*}%\label{} 
f(n) &=& n! \hspace{-.6cm} \sum_{(a_{1},\ldots,a_{r})\in \PP(n)}\hspace{-.6cm} \big( C_{a_{1}}^{3} + \dots + C_{a_{r}}^{3} \big) \;=\; n! \sum_{d=1}^{n} \bigg( P(n-d) + P(n-2d) + \dots + P(n - [\drob{n}{d}] d) \bigg)\cdot C_{d}^{3}\,.
\end{eqnarray*}
Let us regroup the summands in the last sum, by gathering the coefficients of $P(n-k)$ for $k\in\lsem 1,n\rsem$. For instance, the term $P(n-6)$ occurs $4$ times in the sum: as a multiplier of  $C_{1}^{3}$, $C_{2}^{3}$, $C_{3}^{3}$ and $C_{6}^{3}$. We obtain that
\renewcommand{\arraystretch}{2}
\begin{equation}\label{eq:fnB}
\begin{array}{rl}
\dfrac{f(n)}{n!} \;=& \,\Sum\limits_{k=1}^{n} \bigg(\Sum\limits_{d\mid k} C_{d}^{3}\bigg)\, P(n-k) 
\;=\; \dfrac{1}{6} \,\Sum\limits_{k=1}^{n} \bigg(\Sum\limits_{d\mid k} d(d-1)(d-2) \bigg)\, P(n-k)\\ 
=& \dfrac{1}{6} \,\Sum\limits_{k=1}^{n} \bigg(\Sum\limits_{d\mid k} d^{3} - 3d^{2} + 2d \bigg)\, P(n-k) \\
=& \Sum\limits_{k=1}^{n} \bigg( \dfrac{1}{6}\si_{3}(k) - \dfrac{1}{2}\si_{2}(k) + \dfrac{1}{3}\si_{1}(k) \bigg)\, P(n-k)\,, 
\end{array}
\end{equation}
where $\si_{i}(n)$ is the sum of $i^{\textrm{th}}$ powers of the divisors  of $n$, \cf the formula \eqref{eq:sik}.
%%%%%%%%%%%%%%%%%%%%%%%%%%%%%%%%%%%%%%
%%%%%%%%%%%%%%%%%%%%%%%%%%%%%%%%%%%%%%
\item[(b)] Two integers $x$ and $y$ lie in the same cycle of the permutation $s$, and the number $z$ is in another cycle. Moreover, the length of the latter cycle equals the $s$-distance from $y$ to $x$. In this case, the triple $(x,y,z)\in\lsem1,n\rsem^{3}$ and the corresponding permutations $t\in\sn$ with condition \eqref{eq:stzyxB} will be called \deff{allowed for $s$ of second kind}. It is clear that if $x$ and $y$ lie in a cycle of length $a_{j}$ and if $z$ lies in a cycle of length $a_{i}$ with $a_{i}<a_{j}$\,, then such a triple $(x,y,z)$ can be chosen in $a_{i}\cdot a_{j}$ ways. Hence, the number of all allowed triples for $s$ of second kind is equal to 
$$\sum\limits_{ \substack{1\le i<j\le r \\ a_{i} < a_{j}} } a_{i} a_{j}.$$  
Let us find the number of pairs $(s,t)\in\snsn$ when $t$ is allowed of second kind:
\setlength{\jot}{8pt} % to increase the vertical space between equations
\begin{eqnarray*}%\label{} 
g(n) &:=& \hspace{-.6cm} \sum_{(a_{1},\ldots,a_{r})\in \PP(n)}\hspace{-.6cm} \#\sett{(s,t)\in\snsn}{ \flag{s} = a_{1},\ldots,a_{r}\,, \;\; t \textrm{ is allowed for } s \textrm{ of second kind}}\\
&=& \hspace{-.6cm} \sum_{(a_{1},\ldots,a_{r})\in \PP(n)}\hspace{-.6cm} \#\sett{s\in\sn}{ \flag{s} = a_{1},\ldots,a_{r} }\\
&& \hspace{4cm}\times\; \#\sett{t\in\sn}{ t \textrm{ is allowed for } s_{0} \textrm{ of second kind}},
\end{eqnarray*}
where $s_{0}$ is a fixed permutation with flag $(a_{1},\ldots,a_{r})$,
%$\medmu s_{0}=(1\;2\;\ldots\;a_{1})(a_{1}+1\;\;a_{1}+2\;\;\ldots\;\;a_{1}+a_{2})\cdots (a_{1}+\cdots +a_{r-1}+1\;\;a_{1}+\cdots +a_{r-1}+2\;\;\ldots\;\;a_{1}+\cdots+a_{r})$, 
%
\begin{eqnarray*}%\label{} 
g(n) &=& \hspace{-.6cm} \sum_{(a_{1},\ldots,a_{r})\in \PP(n)}\hspace{-.6cm} \#\sett{s\in\sn}{ \flag{s} = a_{1},\ldots,a_{r} }\;\times\; |C_{\sn}(s_{0})| \\
& &\qquad\qquad\times\; \#\sett{ (x,y,z)\in \lsem 1,n\rsem^{3} }{\textrm{the triple } (x,y,z) \textrm{ is allowed for } s_{0} \textrm{ of second kind}} \\
&=& \hspace{-.6cm} \sum_{(a_{1},\ldots,a_{r})\in \PP(n)}\hspace{-.6cm} n! \;\times\; \bigg( \sum\limits_{ \substack{1\le i<j\le r \\ a_{i} < a_{j}} } a_{i} a_{j} \bigg) 
\;=\; n! \hspace{-.6cm} \sum_{ \substack{(a_{1},\ldots,a_{r})\in \PP(n) \\ a_{i} < a_{j}} }\hspace{-.6cm} a_{i} a_{j}\,.
\end{eqnarray*}
Let us simplify the last sum. If there are exactly $a$ parts of length $k=a_{i}$ and exactly $b$ parts of length  $\ell=a_{j}$ in a partition $(a_{1},\ldots,a_{r})$, then gather them in the sum as $a b\cdot k \ell$. Running over all distinct partitions, such a term will occur
\begin{equation}\label{eq:Pnak}
   \medmu P\big(n - a k -b \ell\big) \;-\; P\big(n - (a+1)k - b \ell\big) \;-\; P\big(n - a k - (b+1) \ell\big) \;+\; P\big(n - (a+1)k - (b+1) \ell\big)
\end{equation}
times, according to the inclusion-exclusion principle. The expression \eqref{eq:Pnak} counts the number of partitions of $n$, which contain exactly $a$ parts of length $k$ and exactly $b$ parts of length $\ell$. Therefore,
\renewcommand{\arraystretch}{2}
$$
\begin{array}{rl}
\dfrac{g(n)}{n!} =& \medmu  \hspace{-.2cm} \Sum\limits_{ \substack{a,b,k,\ell\in\N \\ k<\ell \\ a k + b \ell \le n} } 
\hspace{-.2cm} a b k \ell \bigg(  P\big(n - a k -b \ell\big) \;-\; P\big(n - (a+1)k - b \ell\big) \;-\; P\big(n - a k - (b+1) \ell\big) \;+\; P\big(n - (a+1)k - (b+1) \ell\big) \bigg)\,,
\end{array}
$$
from where, gathering the coefficients of $P(n-c)$ for $c\in\lsem 1,n\rsem$, we obtain 
$$
\begin{array}{rl}
\dfrac{g(n)}{n!} =& \Sum\limits_{c=1}^{n} \hspace{.2cm}  \Sum\limits_{ \substack{a,b,k,\ell\in\N \\ k<\ell \\ a k + b \ell = c} } \bigg( a b k \ell - (a-1)b k \ell - a(b-1)k \ell + (a-1)(b-1)k \ell \bigg)\;P(n-c)\\
=& \Sum\limits_{c=1}^{n} \hspace{.2cm}  \Sum\limits_{ \substack{a,b,k,\ell\in\N \\ k<\ell \\ a k + b \ell = c} } k \ell\, P(n-c)\;=\; \Sum\limits_{c=1}^{n} \hspace{.2cm}  \Bigg(\Sum\limits_{ \substack{a,b,k,\ell\in\N \\ k<\ell \\ a k + b \ell = c} } \mhs k \ell\Bigg)\, P(n-c) \,.
\end{array}
$$
Now, let us evaluate the following sum
\begin{equation}\label{eq:kl}
 \sum\limits_{ \substack{a,b,k,\ell\in\N \\ k<\ell \\ a k + b \ell = c} } \mhs k \ell 
\;=\; \frac{1}{2} \Bigg( \sum\limits_{ \substack{a,b,k,\ell\in\N \\ a k + b \ell = c} } \mhs k \ell 
\quad- \sum\limits_{ \substack{a,b,k\in\N \\ (a + b)k = c} } \mhs k^{2}\Bigg)
\end{equation}
From the equality \,$(a + b)k = c$\, follows that $k$ divides $c$. Moreover, for fixed $k$ and $c$, exactly $c/k-1$ pairs of positive integers $(a,b)$ satisfy this equality  (since $a$ can take values from $1$ to $c/k-1$). We have
\begin{equation}\label{eq:kl1}
\sum\limits_{ \substack{a,b,k\in\N \\ (a + b)k = c} } \mhs k^{2} \;=\; \sum_{k\mid c} \left(\frac{c}{k}-1\right)\, k^{2} \;=\; c \sum_{k\mid c} k -  \sum_{k\mid c}k^{2} \;=\; c\, \si_{1}(c) - \si_{2}(c)\,.
\end{equation}
Further, with notation $\al = a k$ and $\be = b \ell$ one has
\begin{equation*}
   \sum_{\substack{a,b,k,\ell\in\N \\ a k + b \ell = c} } \mhs k \ell \;=\; \sum_{\substack{\al, \be\in\N \\ \al + \be = c} }  \bigg(\sum_{k\mid \al}k\bigg) \bigg(\sum_{\ell\mid \be}\ell\bigg) \;=\; \sum_{\substack{\al, \be\in\N \\ \al + \be = c} } \si_{1}(\al) \si_{1}(\be)\,.
\end{equation*}
The last sum appears in one of Ramanujan's formula (\cf Proposition \ref{prop:ramanujan}), due to which
\begin{equation}\label{eq:kl2}
\sum_{\substack{a,b,k,\ell\in\N \\ a k + b \ell = c} } \mhs k \ell \;=\; \sum_{\substack{\al, \be\in\N \\ \al + \be = c} } \si_{1}(\al) \si_{1}(\be) \;=\; \frac{5}{12}\si_{3}(c) + \frac{1}{12}\si_{1}(c) - \frac{1}{2}c\, \si_{1}(c)\,.
\end{equation}

The relations \eqref{eq:kl}, \eqref{eq:kl1} and \eqref{eq:kl2} give
\begin{equation}\label{eq:sumklB}
\sum\limits_{ \substack{a,b,k,\ell\in\N \\ k<\ell \\ a k + b \ell = c} } \mhs k \ell 
\;=\; \frac{1}{2} \Bigg(  \frac{5}{12}\si_{3}(c) + \frac{1}{12}\si_{1}(c) - \frac{1}{2}c\, \si_{1}(c)
 - c\, \si_{1}(c) + \si_{2}(c) \Bigg)
 \;=\; \frac{5}{24}\si_{3}(c) + \frac{1}{2}\si_{2}(c) + \bigg(\frac{1}{24} - \frac{3c}{4}\bigg) \si_{1}(c)    
\end{equation}
and so
\begin{equation}\label{eq:gnB}
   \dfrac{g(n)}{n!} \;=\; \Sum\limits_{c=1}^{n} \hspace{.2cm}  \Bigg(\Sum\limits_{ \substack{a,b,k,\ell\in\N \\ k<\ell \\ a k + b \ell = c} } \mhs k \ell\Bigg)\, P(n-c)
   \;=\; \Sum\limits_{c=1}^{n} \hspace{.2cm}  \Bigg( \frac{5}{24}\si_{3}(c) + \frac{1}{2}\si_{2}(c) + \bigg(\frac{1}{24} - \frac{3c}{4}\bigg) \si_{1}(c) \Bigg)\, P(n-c)\,.
\end{equation}
\end{itemize}
Finally, summing the formulas \eqref{eq:fnB} and \eqref{eq:gnB}, we obtain
\begin{eqnarray*}
   \#\B(n) &=& \hspace{-.6cm} \sum_{(a_{1},\ldots,a_{r})\in \PP(n)}\hspace{-.6cm} \#\sett{(s,t)\in\snsn}{ \flag{s} = a_{1},\ldots,a_{r}\,, \;\; t \textrm{ allowed for } s} \;=\;  f(n) + g(n) \\
   &=& n!\; \sum\limits_{k=1}^{n} \Bigg( \dfrac{1}{6}\si_{3}(k) - \dfrac{1}{2}\si_{2}(k) + \dfrac{1}{3}\si_{1}(k) + \frac{5}{24}\si_{3}(k) + \frac{1}{2}\si_{2}(k) + \bigg(\frac{1}{24} - \frac{3k}{4}\bigg) \si_{1}(k) \Bigg)\, P(n-k)\\
   &=& n!\; \sum\limits_{k=1}^{n} \Bigg( \dfrac{3}{8}\si_{3}(k) - \frac{3k}{4} \si_{1}(k) + \dfrac{3}{8}\si_{1}(k) \Bigg)\, P(n-k)\,.
\end{eqnarray*}
According to Proposition \ref{prop:sidcP}, the equality $\sum\limits_{k=1}^{n} \si_{1}(k) \, P(n-k) = n P(n)$ holds for $n\in\N$\,, from where 
$$\#\B(n) \;=\; \dfrac{3}{8}n!\, \sum\limits_{k=1}^{n} \Big( \si_{3}(k) - 2 \si_{1}(k) \Big)\, P(n-k) \;+\; \dfrac{3n}{8} P(n) n!\,,$$
as required.
\end{proof}

%#############################################################################

\vs
In order to estimate the cardinality of the set $\B(n)$, let us show the next lemma.

\begin{lemm}\label{lemm:B:si3} 
For any natural $n>1$, the following inequality holds
$$\si_{3}(n) < n^{2} \si_{1}(n).$$
For any $\delta > 0$, there exists a positive integer $K=K(\delta)$ such that\footnote{The notation $K=K(\delta)$ means that a number $K$ depends only on $\delta$.} 
$$\si_{3}(n) > n^{2 - \delta} \si_{1}(n)$$
for all natural $n \ge K$.
\end{lemm}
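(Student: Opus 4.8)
The plan is to treat the two inequalities separately, in both cases comparing the divisor sums $\si_3(n)=\sum_{d\mid n}d^3$ and $\si_1(n)=\sum_{d\mid n}d$ term by term.

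For the first inequality I would note that every divisor $d$ of $n$ satisfies $d\le n$, hence $d^3=d^2\cdot d\le n^2 d$. Summing over all $d\mid n$ gives $\si_3(n)\le n^2\si_1(n)$ at once. To upgrade this to a strict inequality, I would single out the divisor $d=1$: it contributes $1$ on the left but $n^2$ on the right, and since $n>1$ these differ. As at least one of the term-wise inequalities is strict, the summed inequality $\si_3(n)<n^2\si_1(n)$ is strict as well.

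For the second inequality the idea is to bound $\si_3$ from below and $\si_1$ from above, both crudely. For the lower bound I would retain only the top term, $\si_3(n)\ge n^3$. For the upper bound I would rewrite $\si_1(n)=\sum_{d\mid n}d=n\sum_{d\mid n}1/d\le n\sum_{k=1}^{n}1/k\le n(1+\ln n)$, using the elementary estimate $\sum_{k=1}^{n}1/k\le 1+\ln n$ (which follows from $\sum_{k=2}^{n}1/k\le\int_{1}^{n}dx/x=\ln n$). Combining the two bounds yields $\si_3(n)/\si_1(n)\ge n^3/\bigl(n(1+\ln n)\bigr)=n^2/(1+\ln n)$, so it suffices to show $n^2/(1+\ln n)>n^{2-\delta}$, that is $n^{\delta}>1+\ln n$.

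The quantitative content is concentrated in this last inequality: for any fixed $\delta>0$ the power $n^{\delta}$ eventually dominates the logarithm $1+\ln n$, so $n^{\delta}/(1+\ln n)\to\infty$ and there is a threshold $K=K(\delta)$ such that $n^{\delta}>1+\ln n$ for all $n\ge K$. Choosing this $K(\delta)$ completes the argument. The only (very mild) obstacle is producing a clean, self-contained upper bound for $\si_1(n)$; the harmonic-sum estimate above suffices and avoids appealing to the deeper fact that the divisor count $\tau(n)$ is subpolynomial. One could instead use $\si_1(n)\le n\,\tau(n)$ together with $\tau(n)=n^{o(1)}$, but the harmonic bound keeps everything elementary.
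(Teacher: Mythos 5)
Your proof is correct. For the first inequality you argue exactly as the paper does (term-by-term comparison $d^{3}\le n^{2}d$ over the divisors of $n$, with strictness coming from the divisor $d=1$), but for the second inequality you take a genuinely different and more elementary route. The paper works multiplicatively: it introduces $g(n)=\si_{3}(n)/\bigl(n\,\si_{1}(n)\bigr)$, computes $g(p^{\al})>p^{\al}\bigl(1-\frac{1}{p}\bigr)$ on prime powers to get the intermediate inequality $\si_{3}(n)>n\,\phi(n)\,\si_{1}(n)$, and then shows $\phi(n)>n^{1-\delta}$ for all large $n$ by applying Proposition \ref{prop:appA:epsilon} (the Hardy--Wright theorem that a multiplicative $f$ with $f(p^{\al})\to 0$ satisfies $f(n)\to 0$) to $f(n)=n^{1-\delta}/\phi(n)$. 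You instead bound the two sides crudely and directly, $\si_{3}(n)\ge n^{3}$ and $\si_{1}(n)\le n(1+\ln n)$ via the harmonic sum, reducing everything to $n^{\delta}>1+\ln n$. Your version buys elementarity and effectivity: it avoids multiplicativity and Proposition \ref{prop:appA:epsilon} entirely, the threshold $K(\delta)$ can be written down explicitly by solving $n^{\delta}>1+\ln n$, and in fact you prove the cleaner quantitative statement $\si_{3}(n)/\si_{1}(n)\ge n^{2}/(1+\ln n)$, which is stronger than $n^{2-\delta}$ for every fixed $\delta$. What the paper's route buys is a slightly sharper intermediate bound ($\si_{3}(n)>n\,\phi(n)\,\si_{1}(n)$, and $\phi(n)$ decays only like $n/\log\log n$ in the worst case, versus your $n/\log n$ loss) together with economy of means: the same multiplicative device with Proposition \ref{prop:appA:epsilon} is reused almost verbatim in Corollary \ref{corr:A:bounds} to bound $J_{2}(n)$, so the authors get two results from one template. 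Both arguments fully establish the lemma.
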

\begin{proof}%\label{} 
The first inequality follows from the definition of $\si_{3}(n)$ and $\si_{1}(n)$:
$$\si_{3}(n) \;=\; \sum_{d\mid n} d^{3} \;<\; \sum_{d\mid n} n^{2} d \;=\; n^{2}\si_{1}(n).$$
Consider now the arithmetic function
$$g(n) = \frac{\si_{3}(n)}{n \si_{1}(n)}.$$
Notice that it is multiplicative as a ratio of two multiplicative functions.
For a power of a prime $p^{\al}$, according to the formula \eqref{eq:sik} we have
\setlength{\jot}{8pt} % to increase the vertical space between equations
\begin{eqnarray*}%\label{} 
g(p^{\al}) &=& \frac{\si_{3}(p^{\al})}{p^{\al} \si_{1}(p^{\al})} 
	\;=\; \frac{ \drob{ (p^{3(\al+1)} - 1) }{ (p^{3} - 1) } }
		{ p^{\al} \drob{ (p^{\al+1} - 1) }{(p-1)} } 
	\;=\; \frac{ (p^{\al+1}-1) (p^{2(\al+1)} + p^{\al+1} + 1) }
		{ p^{\al} (p^{\al+1}-1) (p^{2} + p + 1) } \\
&=& \frac{ p^{2\al+2}\big(1 + \drob{1}{p^{\al+1}} + \drob{1}{p^{2\al+2}}\big) }
		{ p^{\al+2}\big(1 + \drob{1}{p} + \drob{1}{p^{2}}\big) }
	\;=\; p^{\al}\cdot \frac{ 1 + \drob{1}{p^{\al+1}} + \drob{1}{p^{2\al+2}} }{ 1 + \drob{1}{p} + \drob{1}{p^{2}} } \\
&>& p^{\al}\cdot \frac{ 1 }{ 1 + \frac{1}{p} + \frac{1}{p^{2}} + \frac{1}{p^{3}} + \cdots} 
	\;=\; p^{\al}\cdot \frac{ 1 }{ \drob{1}{\big(1-\frac{1}{p}\big)} } 
	\;=\; p^{\al} \left(1 - \frac{1}{p}\right).
\end{eqnarray*}
Thus, for any positive integer $n=p_{1}^{\al_{1}}\cdots\, p_{r}^{\al_{r}}>1$, the following inequality holds
$$g(n) \;=\; g(p_{1}^{\al_{1}}) \cdots\, g(p_{r}^{\al_{r}}) 
	\;>\; p_{1}^{\al_{1}} \left(1 - \frac{1}{p_{1}}\right)\cdots\, p_{r}^{\al_{r}} \left(1 - \frac{1}{p_{r}}\right)
	\;=\; n\mhs \prod_{\substack{p | n\\ p\;\textrm{prime}}} \mhs \left(1-\frac{1}{p}\right)
	\;=\; \phi(n),$$ 
\begin{equation}\label{eq:B:si3:si3}
	\textrm{\ie}\qquad \si_{3}(n) > n\phi(n) \si_{1}(n),
\end{equation}
where $\phi(n)$ is Euler's totient function, \cf \eqref{eq:phi}.

Let us show that for any $\delta > 0$, there exists a natural $K=K(\delta)$ such that 
\begin{equation}\label{eq:B:si3:phi}
	\phi(n) > n^{1 - \delta}
\end{equation}
for all natural $n \ge K$. Consider the function
$$f(n) = \frac{n^{1-\delta}}{\phi(n)}.$$
It is multiplicative and when $n=p^{\al}$ it takes the value
$$f(p^{\al}) \;=\; \frac{ p^{\al(1-\delta)} }{ p^{\al} \left(1 - \drob{1}{p}\right)  }
	\;=\; \frac{1}{p^{\al\delta}}\cdot \frac{p}{p-1}\,,$$
which tends to zero as $p^{\al}\to +\infty$. According to Proposition \ref{prop:appA:epsilon}, we conclude that $f(n)\to 0$ as $n\to +\infty$. In particular, there exists $K\in\N$ such that
$$f(n) < 1 \quad \textrm{for all } n\ge K.$$
This proves the inequality \eqref{eq:B:si3:phi}, from where taking into account the inequality \eqref{eq:B:si3:si3} we obtain that
$$\si_{3}(n) \;>\; n\phi(n) \si_{1}(n) \;>\; n^{2 - \delta} \si_{1}(n) \quad \textrm{for all } n\ge K,$$
as required.
\end{proof}

\vs
For a real number $a$, consider the following arithmetic function:
\begin{equation}\label{eq:B:psi}
	\psi_{a}(n) = \sum_{k=1}^{n} k^{a} \si_{1}(k)P(n-k).
\end{equation}
In Proposition \ref{prop:sidcP} it is shown that $\psi_{0}(n) = n P(n)$. If $a\ge b$ then for any  $n\in\N$, the inequality $\psi_{a}(n) \ge \psi_{b}(n)$ holds. We have the following simple bounds of the function $\psi_{a}(n)$ when $a\ge 0$:
\begin{equation}\label{eq:B:psibounds}
	n P(n) =\sum_{k=1}^{n} \si_{1}(k)P(n-k) \;\le\; \psi_{a}(n) \;\le\; n^{a} \sum_{k=1}^{n}  \si_{1}(k)P(n-k) = n^{a+1} P(n).
\end{equation}
In particular, $n P(n)\le \psi_{2}(n) \le n^{3}P(n)$. 

Let us now estimate the cardinality of the set $\B(n)$.

\begin{corr}\label{corr:B:bounds} 
For any $\eps > 0$, there exists a positive integer $N=N(\eps)$ such that 
$$\psi_{2 - \eps}(n)\cdot n! \;<\; \#\B(n) \;<\; \frac{3}{8} \psi_{2}(n)\cdot n!$$
for all natural \,$n \ge N$.
\end{corr}

\begin{proof}%\label{} 
From Theorem \ref{th:B}, the relation $\psi_{0}(n) = n P(n)$, the inequality $\psi_{1}(n)\ge \psi_{0}(n)$ and  the inequality $\si_{3}(k) \le k^{2}\si(k)$ in Lemma \ref{lemm:B:si3} follows that
\begin{eqnarray*}%\label{} 
   \frac{8}{3}\cdot \frac{\#\B(n)}{n!} &=& \sum_{k=1}^{n}\si_{3}(k) P(n-k) \;-\; 2 \psi_{1}(n) \;+\; \psi_{0}(n)\\
   &\le&  \sum_{k=1}^{n} k^{2} \si_{1}(k) P(n-k) \;-\; 2 \psi_{0}(n) \;+\; \psi_{0}(n) \;=\; \psi_{2}(n) - \psi_{0}(n)\\
   &<& \psi_{2}(n),
\end{eqnarray*}
which proves the upper bound of the cardinality of $\B(n)$ for all $n\in\N$. 

Let $\eps$ be an arbitrary positive real number. Choose any number $\delta > 0$ with conditions $\delta<1$ and $\delta < \eps$. 
According to Lemma \ref{lemm:B:si3}, there exists a positive integer $K$ such that $\si_{3}(k) > k^{2 - \delta} \si_{1}(k)$ for all natural $k \ge K$. Then
\begin{eqnarray*}%\label{} 
 \sum_{k=1}^{n}\si_{3}(k) P(n-k) &=& \sum_{k=K}^{n}\si_{3}(k) P(n-k) + \sum_{k=1}^{K-1}\si_{3}(k) P(n-k)\\
 	&>& \sum_{k=K}^{n} k^{2-\delta} \si_{1}(k) P(n-k) + \sum_{k=1}^{K-1}\si_{3}(k) P(n-k)\\ 
	&>& \sum_{k=K}^{n} k^{2-\delta} \si_{1}(k) P(n-k).
\end{eqnarray*}
In the right side of this inequality, let us add and subtract the expression $\sum\limits_{k=1}^{K-1} k^{2-\delta}\si_{1}(k) P(n-k)$, getting
\begin{eqnarray}%\label{eq:}
	\sum_{k=1}^{n}\si_{3}(k) P(n-k) 
	&>& \sum_{k=1}^{n} k^{2-\delta} \si_{1}(k) P(n-k) \;-\; \sum_{k=1}^{K-1} k^{2-\delta} \si_{1}(k) P(n-k) \nonumber\\ 
	&>&  \psi_{2-\delta}(n) \;-\;  K^{5} P(n)\,, \label{eq:B:si3-1}
\end{eqnarray}
since $k^{2-\delta} < K^{2}$, \;$\si_{1}(k)\le k^{2} < K^{2}$  \;and\;  $P(n-k) < P(n)$ for $1\le k < K$.

From the conditions $\delta < 1$ and $\delta < \eps$ follows that     
\begin{equation*}%\label{eq:}
	\frac{2k + \frac{8}{3} k^{2-\eps}}{k^{2-\delta}} \;=\; \frac{2}{k^{1-\delta}} + \frac{8/3}{k^{\eps-\delta}} 
	\;\;\underset{k\to+\infty}{\longto}\;\; 0.
\end{equation*}
Hence, there is a natural $L=L(\eps)$ such that 
\begin{equation*}%\label{eq:}
	\frac{2k + \frac{8}{3} k^{2-\eps}}{k^{2-\delta}} < 1,\quad\textrm{\ie}\quad 
	k^{2-\delta} - 2k > \frac{8}{3} k^{2-\eps} \quad\textrm{for } k\ge L\,.
\end{equation*}
When $1\le k < L$, one has the inequalities $k^{2-\delta} - 2k - \frac{8}{3} k^{2-\eps} > k - 2k - 3k^{2} \ge -L - 3L^{2} \ge - 4L^{2}$, \ie $k^{2-\delta} - 2k >  \frac{8}{3} k^{2-\eps} - 4L^{2}$.
Therefore, we obtain
\begin{eqnarray*}%\label{} 
\psi_{2-\delta}(n) - 2\psi_{1}(n) 
% &=& \sum_{k=1}^{n} k^{2-\delta} \si_{1}(k) P(n-k)  \;-\; 2\sum_{k=1}^{n} k\si_{1}(k)P(n-k)\\
&=& \sum_{k=1}^{n} \left(k^{2-\delta} - 2k\right)\cdot \si_{1}(k) P(n-k)\\
&=& \sum_{k=L}^{n} \left(k^{2-\delta} - 2k\right)\cdot \si_{1}(k) P(n-k) \;+\; \sum_{k=1}^{L-1} \left(k^{2-\delta} - 2k\right)\cdot \si_{1}(k) P(n-k)\\
&>& \sum_{k=L}^{n}  \frac{8}{3} k^{2-\eps} \si_{1}(k) P(n-k) \;+\; \sum_{k=1}^{L-1} \left(\frac{8}{3} k^{2-\eps} - 4L^{2}\right)\cdot \si_{1}(k) P(n-k) \\
&>& \frac{8}{3} \sum_{k=1}^{n}  k^{2-\eps} \si_{1}(k) P(n-k) \;-\; L\cdot 4L^{2} \cdot L^{2} P(n) \;=\; \frac{8}{3} \psi_{2-\eps}(n) - 4L^{5} P(n),
\end{eqnarray*}
since $\si_{1}(k)\le k^{2} < L^{2}$ \;and\; $P(n-k) < P(n)$, when $1\le k<L$. 

By applying this inequality and the inequality \eqref{eq:B:si3-1}, we get the following bound
\setlength{\jot}{8pt} % to increase the vertical space between equations
\begin{eqnarray*}%\label{} 
   \frac{8}{3}\cdot \frac{\#\B(n)}{n!} 
   &=& \sum_{k=1}^{n}\si_{3}(k) P(n-k)  \;-\; 2\psi_{1}(n) \;+\; n P(n)\\
   &>& \psi_{2-\delta}(n) \;-\; 2\psi_{1}(n) \;-\;  K^{5} P(n) \;+\; n P(n)\\
   &>& \frac{8}{3} \psi_{2-\eps}(n) \;+\;  \left(n - K^{5} - 4L^{5} \right)\cdot P(n) \\
   &>& \frac{8}{3} \psi_{2-\eps}(n) \qquad \textrm{for } n\ge N,
\end{eqnarray*}
where a constant $N = K^{5} + 4L^{5}$ depends on $\delta$ and $\eps$. In its turn, the choice of $\delta$ was based on the number $\eps$. Therefore, $N$ depends only on $\eps$.

We conclude that for any $\eps > 0$, there exists a natural $N=N(\eps)$ such that $\#\B(n) > \psi_{2-\eps} (n)\cdot n!$ for all $n\ge N$. This completes the proof of Corollary.
\end{proof}

\vs
The next statement is an application of Theorem \ref{th:B} to the theory of representations of the symmetric group (several main definitions are given in Appendix \ref{sec:representations}).

\begin{corr}\label{corr:B:character} 
Let $c\in\sn$ be any $3$-cycle. 
For all natural $n>2$ the following relation holds
\begin{equation*}%\label{eq:}
	\frac{8}{9} n(n-1)(n-2) \sum_{\rho} \frac{\upchi_{\rho}(c)}{\dim \rho}
	\;=\; n P(n) + \sum_{k=1}^{n} \bigg( \si_{3}(k) - 2k\, \si_{1}(k) \bigg) P(n-k)\,,
\end{equation*}
where the sum is over all $($pairwise nonequivalent$)$ irreducible representations $\rho$ of $\sn$\,, and $\upchi_{\rho}$ denotes the character of $\rho$.

In particular, one has the bounds
\begin{equation*}%\label{eq:}
	\frac{3}{n^{2}} P(n) \;<\;
	\sum_{\rho} \frac{\upchi_{\rho}(c)}{\dim \rho} \;<\; 
	\frac{5}{4} P(n) %\left(\frac{9}{8} +\delta\right) P(n)
\end{equation*}
for any sufficiently large $n$.
\end{corr}
\begin{proof}
According to Frobenius's formula for $u=c$  (\cf Proposition \ref{prop:frobenius}), we have
\begin{equation*}%\label{eq:}
	\#\B(n) \;=\; n!\cdot\frac{n(n-1)(n-2)}{3}\cdot \sum_{\rho} \frac{\upchi_{\rho}(c)}{\dim \rho}\,. 
\end{equation*}
Comparing with Theorem \ref{th:B}, we get the required relation.

We know from Corollary \ref{corr:B:bounds} with $\eps=2$ that, for sufficiently large $n$, the following inequalities hold
$$
\psi_{0}(n) \;<\; \frac{\#\B(n)}{n!} \;<\; \frac{3}{8}\psi_{2}(n)\,,
\qquad\textrm{and so}\qquad
n P(n)\;<\; \frac{\#\B(n)}{n!} \;<\; \frac{3}{8} n^{3} P(n)\,,
$$
since $\psi_{0}(n) = n P(n)$ and $\psi_{2}(n)\le n^{3} P(n)$.\, Therefore,
\begin{equation*}%\label{eq:}
	\frac{3}{(n-1)(n-2)} P(n) \;<\;
	\sum_{\rho} \frac{\upchi_{\rho}(c)}{\dim \rho} \;<\;
	\frac{9}{8}\cdot \frac{n^{2}}{(n-1)(n-2)} P(n)\,.
\end{equation*}
But $\frac{3}{n^{2}}<\frac{3}{(n-1)(n-2)}$ and for sufficiently large $n$ one has 
$$\frac{9}{8}\cdot \frac{n^{2}}{(n-1)(n-2)} = \frac{9}{8}\cdot \frac{1}{(1-\frac{1}{n})(1-\frac{2}{n})} < \frac{10}{8} =\frac{5}{4}\,.$$
This proves the required bounds.
\end{proof}

%%%%%%%%%%%%%%%%%%%%%%%%%%%%%%%%%%%%%%%%%%%%%%
\subsection{The number of primitive pairs: a formula for $\#\A(n)$}

\begin{lemm}\label{lemm:twocylinderprim0} 
Vectors $\vv{(\al,a)}$, $\vv{(\be,b)}$, $\vv{(k,0)}$ and $\vv{(\ell,0)}$ generate the lattice $\Z^{2}$ if and only if the following two conditions are stisfied:
$$a\land b = 1 \qquad\textrm{and}\qquad k\land \ell\land (a\be - b\al) = 1.$$
\end{lemm}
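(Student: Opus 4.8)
The plan is to reduce the generation problem to two greatest-common-divisor conditions by combining the projection of the generated lattice onto the second coordinate with a description of its intersection with the horizontal axis. Throughout, let $L\subseteq\Z^2$ denote the subgroup generated by the four vectors and put $D=a\be-b\al$, so that $D$ is, up to sign, the determinant $\det(\vv{(\al,a)},\vv{(\be,b)})$. The engine of the argument is the elementary identity $m_1\Z+m_2\Z=(m_1\land m_2)\Z$, which I will use several times.

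First I would establish that $a\land b=1$ is necessary. Projecting $L$ onto the second coordinate kills $\vv{(k,0)}$ and $\vv{(\ell,0)}$ and sends $\vv{(\al,a)},\vv{(\be,b)}$ to $a,b$, so the image equals $a\Z+b\Z=(a\land b)\Z$. If $L=\Z^2$ then this image is all of $\Z$, forcing $a\land b=1$; this disposes of one half of the equivalence.

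Assuming now that $a\land b=1$, I would determine $L$ exactly. Choosing $p,q$ with $pa+qb=1$ gives $p\,\vv{(\al,a)}+q\,\vv{(\be,b)}=(c,1)$ with $c=p\al+q\be$, so $L$ contains a vector of the form $(c,1)$; subtracting integer multiples of it shows that $L$ is generated by $(c,1)$ together with $L\cap(\Z\times\{0\})$. To compute this intersection I would solve $m_1a+m_2b=0$: since $a\land b=1$ its integer solutions are exactly $m_1=bt,\ m_2=-at$, and the corresponding first coordinate is $-tD+m_3k+m_4\ell$. As $t,m_3,m_4$ run over $\Z$ these values fill out $D\Z+k\Z+\ell\Z=(D\land k\land\ell)\Z$, so $L\cap(\Z\times\{0\})=(D\land k\land\ell)\Z\times\{0\}$. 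Hence $L=\group{(c,1),\,(D\land k\land\ell,0)}$, whose index in $\Z^2$ equals $D\land k\land\ell$; therefore $L=\Z^2$ if and only if $D\land k\land\ell=1$, which together with the previous step yields both directions of the stated equivalence.

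The one point requiring care is the clean identification of $L\cap(\Z\times\{0\})$: one must verify that the general integer solution of $m_1a+m_2b=0$ is precisely $(bt,-at)$ (this is the second place where $a\land b=1$ is essential) and then recognize the set of attainable first coordinates as the ideal $D\Z+k\Z+\ell\Z$. A shorter alternative avoids this bookkeeping by invoking the Smith-normal-form criterion that integer vectors generate $\Z^2$ exactly when the gcd of all their $2\times2$ minors equals $1$; here the nonzero minors are $D,\ ak,\ a\ell,\ bk,\ b\ell$, and since $\gcd(ak,a\ell,bk,b\ell)=(a\land b)(k\land\ell)$ while $a\land b\mid D$, this total gcd is $1$ precisely when $a\land b=1$ and $D\land k\land\ell=1$. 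I would present the elementary version as the main proof and mention the minor criterion as a remark.
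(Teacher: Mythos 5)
Your proof is correct, and its computational engine coincides with the paper's: both arguments rest on the same two observations, namely that the ordinate of any integer combination of the four vectors is divisible by $a\land b$, and that once $a\land b=1$ the combinations with zero ordinate are exactly those with $(m_{1},m_{2})=(tb,-ta)$, so their first coordinates fill precisely $(a\be-b\al)\Z+k\Z+\ell\Z=\bigl((a\be-b\al)\land k\land \ell\bigr)\Z$. The difference is in the logical packaging. The paper proves the two implications separately: for the forward direction it extracts from $\vv{(1,0)}\in L$ the relation $m(a\be-b\al)+Zk+W\ell=1$, and for the converse it explicitly constructs $\vv{(1,0)}$ together with a vector $\vv{(X\al+Y\be,\,1)}$ and notes that these two generate $\Z^{2}$. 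You instead determine the subgroup $L$ completely, as $\group{(c,1),\,(g,0)}$ with $g=(a\be-b\al)\land k\land\ell$, and read off the equivalence from the index $[\Z^{2}:L]=g$ (well defined, since $k,\ell\geq 1$ forces $g\geq 1$); this buys a single computation covering both directions at once and the sharper conclusion identifying $L$ and its index in all cases — incidentally, this is exactly the lattice the paper goes on to use in the forward direction of Lemma \ref{lemm:twocylinderprim}. Your alternative via the gcd-of-$2\times2$-minors (Smith form) criterion is also sound: the minors are, up to sign, $a\be-b\al$, $ak$, $a\ell$, $bk$, $b\ell$, and since $\gcd(ak,a\ell,bk,b\ell)=(a\land b)(k\land\ell)$ while $a\land b$ divides $a\be-b\al$, the total gcd equals $1$ exactly under the two stated conditions.
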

\begin{proof}%\label{} 
\onlyif The condition $a\land b = 1$ is necessary, since the ordinate of any linear combination with integer coefficients of the four given vectors is obviously divisible by the greatest common divisor of $a$ and $b$. Suppose that the vectors $\vv{(\al,a)}$, $\vv{(\be,b)}$, $\vv{(k,0)}$, $\vv{(\ell,0)}$ generate $\Z^{2}$ and that $a\land b = 1$. Let us find all integers $p$, for which the system of equations
\begin{equation*}
	\begin{cases}%\label{} 
		\al X + \be Y = p, \\
		a X + b Y = 0
	\end{cases}
\end{equation*}
has a solution in integers $X$ and $Y$. We get\; $a X = -b Y$, which is equivalent to equalities $X = -m b$ and $Y = m a$ for some $m\in\Z$, since $a$ and $b$ are coprime. From this we obtain 
$$p = \al X + \be Y = m(a\be - b\al),$$
which is a multiple of $a\be - b\al$. 

By assumption, the vector $\vv{(1,0)}$ is a linear combination 
$$X \vv{(\al,a)} + Y \vv{(\be,b)} + Z \vv{(k,0)} + W \vv{(\ell,0)}$$ 
with integer coefficients $X$, $Y$, $Z$ and $W$. This is possible only if 
\begin{equation}\label{eq:twocylinderlinear}
	m(a\be - b\al) + Z k + W \ell=1\quad \textrm{for some integers } m, Z \textrm{ and } W,
\end{equation}
\ie if the numbers $a\be - b\al$, $k$ and $\ell$ are coprime.

\vs
\ifonly Conversely, suppose that the conditions $a\land b = 1$ and $k\land \ell\land (a\be - b\al) = 1$ are satisfied. From the second condition follows that the equality \eqref{eq:twocylinderlinear} holds, and so
$$-m b \vv{(\al,a)} + m a \vv{(\be,b)} + Z \vv{(k,0)} + W \vv{(\ell,0)} = \vv{(m(a\be - b\al) + Z k + W \ell,\; -m b a + m a b)} = \vv{(1,0)}.$$
The first condition implies the existence of integers $X$ and $Y$ such that $X a + Y b = 1$, from where 
$$X \vv{(\al,a)} + Y \vv{(\be,b)} = \vv{(X\al+Y\be,\; 1)}.$$
Since linear combinations of the vector $\vv{(1,0)}$ and $\vv{(X\al+Y\be,\, 1)}$ with integer coefficients cover all $\Z^{2}$, then the given four vectors generate $\Z^{2}$.
\end{proof}

%%%%%%%%%%%%%%%%%%%%%%%%%%%%%%%%%%%%%%%%%%%%%%

\vs
\begin{lemm}\label{lemm:twocylinderprim} 
A two-cylinder origami with parameters $(a, b, k, \ell, \al, \be)$ is primitive if and only if the following two conditions are satisfied:
$$a\land b = 1 \qquad\textrm{and}\qquad k\land \ell\land (a\be - b\al) = 1.$$
\end{lemm}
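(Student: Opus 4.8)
The plan is to translate the primitivity of the monodromy group $G=\group{s,t}$ into a statement about the \emph{holonomy lattice} of the square-tiled surface, and then read off the answer from Lemma \ref{lemm:twocylinderprim0}. By Jordan's theorem we already know primitivity is the relevant property, so the whole task is to characterize it in terms of the six parameters.

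First I would set up the developing map. Fixing a base square and unfolding the origami $O(s,t)$ in the plane $\R^{2}$, every closed path on the surface returns with a \emph{holonomy vector} in $\Z^{2}$, and the set of all such vectors forms a finite-index sublattice $L\subseteq\Z^{2}$. Reading off Figure \ref{fig:twocylinder}, the two core curves of the cylinders have holonomies $\vv{(k,0)}$ and $\vv{(\ell,0)}$, while the two curves crossing the cylinders of heights $a$ and $b$ have holonomies $\vv{(\al,a)}$ and $\vv{(\be,b)}$. Since $[s,t]$ is a single $3$-cycle, the surface carries exactly one cone point (a zero of order $2$) and has genus $2$; these four curves then form a homology basis of $O$, so $L$ is precisely the lattice generated by the four vectors $\vv{(\al,a)}$, $\vv{(\be,b)}$, $\vv{(k,0)}$, $\vv{(\ell,0)}$. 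Verifying that these four closed curves generate $H_{1}(O;\Z)$, hence that their holonomies generate all of $L$, is a routine check from the explicit picture.

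The heart of the proof is the equivalence: $O(s,t)$ is primitive if and only if $L=\Z^{2}$. For the easy direction, if $L\subsetneq\Z^{2}$ then the developing map descends to a branched covering $O\to\R^{2}/L$, exhibiting $O$ as a nontrivial cover of the square-tiled torus $\R^{2}/L$, which in turn covers $T=\R^{2}/\Z^{2}$ with degree $[\Z^{2}:L]>1$; the fibres over the squares of $\R^{2}/L$ form a nontrivial block system, so $G$ is imprimitive. For the converse I would show that every proper intermediate covering $O\to O'\to T$ has $O'$ a torus, so that imprimitivity forces a proper sublattice $L\subseteq L'\subsetneq\Z^{2}$ and hence $L\neq\Z^{2}$. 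The main obstacle is exactly here: an intermediate origami $O'$ could \emph{a priori} be ramified over the corner of $T$ and thus have higher genus. To rule this out I would use that $O$ has genus $2$ with its single order-$2$ zero: if $O'\to T$ were ramified then $O'$ would carry a cone point and so have genus at least $2$, and a Riemann--Hurwitz count for $O\to O'$ would then force this map to be an isomorphism, contradicting that $O'$ is a proper quotient. Thus only unramified intermediate tori occur.

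Combining the two directions, $O(s,t)$ is primitive precisely when the four vectors generate $\Z^{2}$, and Lemma \ref{lemm:twocylinderprim0} rewrites this condition as $a\land b=1$ and $k\land\ell\land(a\be-b\al)=1$, which completes the proof. As an alternative to the Riemann--Hurwitz step, one could argue directly with blocks in the spirit of Lemma \ref{lemm:onecylinderprim}, tracking how $s$ and $t$ permute the cylinder coordinates and showing that a minimal block propagates along the lattice $L$; but the reduction to Lemma \ref{lemm:twocylinderprim0} is cleanest, and it is clearly the tool the preceding lemma was prepared for.
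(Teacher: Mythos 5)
Your proposal is correct, and on the hard direction it takes a genuinely different route from the paper. The easy direction coincides in substance with the paper's: the paper's nontrivial block is exactly the set of squares whose lower-left vertices lie in the lattice $L$ generated by $\vv{(\al,a)}$, $\vv{(\be,b)}$, $\vv{(k,0)}$, $\vv{(\ell,0)}$, which is a fibre of your branched cover $O\to\R^{2}/L$ (and, as you implicitly use, the blocks have size $>1$ because a genus-$2$ surface cannot cover a torus with degree $1$). For the converse, the paper argues by elementary combinatorics from scratch: it tiles the plane with copies of the unfolded surface, introduces lifted permutations $\widehat{s},\widehat{t}$ with ``teleports'', attaches to a putative nontrivial block the set $V$ of difference vectors of block-mates, proves closure of $V$ under adding $\pm\vv{(\ell,0)}$, $\pm\vv{(\al+\be,a+b)}$, $\pm\vv{(\ell-k,0)}$, $\pm\vv{(\al,a)}$, and only then invokes Lemma \ref{lemm:twocylinderprim0} to force $V=\Z^{2}$, hence triviality of the block. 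You replace all of this by the Galois correspondence (block systems of the monodromy action correspond to intermediate covers $O\to O'\to T$) together with the fact that $O$ lies in $\mathcal{H}(2)$: a ramified intermediate $O'$ would carry a cone point, hence satisfy $\chi(O')\le -2=\chi(O)$, and Riemann--Hurwitz then forces $\deg(O\to O')=1$, a contradiction, so $O'$ is an unramified torus $\R^{2}/L'$ and $L\subseteq L'\subsetneq\Z^{2}$. Your route is shorter and more conceptual, and it explains why the answer is exactly ``the four holonomy vectors generate $\Z^{2}$''; note, though, that the Riemann--Hurwitz exclusion of ramified intermediates uses $g=2$ crucially (for a single zero of higher order the same count only bounds the degree, it does not force an isomorphism). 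What it costs: it imports $\pi_{1}$, homology and Riemann--Hurwitz, which this deliberately self-contained paper avoids; and the one step you dismiss as routine --- that the two core curves and two crossing curves generate $H_{1}(O;\Z)$, so that the absolute-period lattice is exactly $\group{\vv{(\al,a)},\vv{(\be,b)},\vv{(k,0)},\vv{(\ell,0)}}$ with twist conventions matching the paper's (whose vertical translations are $\vv{(\be,b)}$ and $\vv{(\al+\be,a+b)}$, generating the same lattice) --- is precisely where the bookkeeping that fills the paper's long argument is hiding. It does check out, for instance because the intersection matrix of those four curves is unimodular, but in a complete write-up that verification should be carried out explicitly rather than asserted.
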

\begin{proof} 
A two-cylinder origami with parameters $(a, b, k, \ell, \al, \be)$ consists of $n = a k + b \ell$ squares. Number its squares by the integers from $1$ to $n$. Then we will get two permutations $s$ and $t$ from $\sn$ that indicate how the squares are glued in the horizontal and the vertical directions respectively. Denote by $G$ the monodromy group origami, \ie the permutation group generatеd by the pair $(s,t)$.

\vs
\onlyif Suppose that the conditions $a\land b = 1$ and $k\land \ell\land (a\be - b\al) = 1$ are not satisfied. By Lemma \ref{lemm:twocylinderprim0}, this means that the vectors $\vv{(\al,a)}$, $\vv{(\be,b)}$, $\vv{(k,0)}$ and $\vv{(\ell,0)}$ generate a lattice $L$ not coinciding with the entire $\Z^{2}$.   Let us show then that the square-tiled surface is not primitive, \ie the permutation group  $G = \group{s,t}$ has a nontrivial block $\Delta\subset \lsem 1,n\rsem$.

\begin{figure}[htbp]
   \begin{center}
   \begin{tikzpicture}[scale=.4, >=latex,shorten >=1pt,shorten <=1pt,line width=1pt,bend angle=20, 
   		circ/.style={circle,inner sep=2pt,draw=black!40,rounded corners, text centered},
   		tblue/.style={dashed,blue,rounded corners}]
        
      	\draw[step=1] (0,0) grid +(11,3);
      	\draw[step=1] (0,-8) grid +(5,4);
	
	\path[<->,thin,>=arcs] 	(-1.7,-8) edge node[left] {\small $a$} ++ (0,4)
						(-1.7,0) edge node[left] {\small $b$} ++ (0,3);
	\foreach \x in {0,10}
		\draw[thin] (\x,1) -- ++(0,1);

      	\draw[line width=2pt,blue] (5,0) -- ++(0,3);

	\path[<->,thin,>=arcs,blue]  (0,3.7) edge node[above] {\small $\be$} ++ (4,0)
						(4,3.7) edge node[above] {\small $\ell-\be$} ++ (7,0)
						(0,-8.7) edge node[below] {\small $\ell-\be$} ++ (7,0)
						(7,-8.7) edge node[below] {\small $\be$} ++ (4,0);
						
	\draw[dotted] 	(0,0) -- ++(4,3)
				(11,0) -- ++(4,3) -- ++(-4,0);

	\draw[dotted] 	(0,-8) -- ++(2,4)
				(5,-8) -- ++(2,4) -- ++(-2,0);

	\path[<->,thin,>=arcs,blue]  (0,-3.5) edge node[above] {\small $\al$} ++ (2,0)
						(2,-3.5) edge node[above] {\small $k-\al$} ++ (3,0)
						(0,-.5) edge node[below] {\small $k-\al$} ++ (3,0)
						(3,-.5) edge node[yshift=-.08cm,below] {\small $\al$} ++ (2,0);
	\foreach \x in {0,4,11}
		\draw[thin,blue] (\x,3) -- ++(0,1);
	\foreach \x in {0,7,11}
		\draw[thin,blue] (\x,-8) -- ++(0,-1);
	
	\foreach \x in {0,2,5}
		\draw[thin,blue] (\x,-4) -- ++(0,.8);
	\foreach \x in {0,3,5}
		\draw[thin,blue] (\x,0) -- ++(0,-.8);
		
	\begin{scope}[xshift=17cm,yshift=-7cm]
	\node at (-4,4) {$=$};
	\path[<->,thin,>=arcs,green!40!black]  	
						(2,7.8) edge node[above] {\small $\be$} ++ (4,0)
						(6,7.8) edge node[above] {\small $k$} ++ (5,0)
						(11,7.8) edge node[above] {\small $l-k$} ++ (6,0)
						(0,-.8) edge node[below] {\small $k$} ++ (5,0)
						(5,-.8) edge node[below] {\small $\al$} ++ (2,0)
						(7,3.2) edge node[below] {\small $l-k$} ++ (6,0)
						(-.8,0) edge node[left] {\small $a$} ++ (0,4)
						(1.2,4) edge node[left] {\small $b$} ++ (0,3);
	\clip (0,0) -- ++(2,4) -- ++(4,3) -- ++(11,0) -- ++(-4,-3) -- ++(-6,0) -- ++(-2,-4) -- ++(-5,0);
      	\draw[step=1] (0,0) grid +(17,7);
	\draw 	(0,0) -- ++(2,4) -- ++(4,3)
			(5,0) -- ++(2,4)
			(13,4) -- ++(4,3);
	\end{scope}
   \end{tikzpicture}
   \caption{An unfolding of the two-cylinder square-tiled surface.}
   \label{fig:twocylinderprimunfold}
   \end{center} \vspace{-0.3cm}
\end{figure}
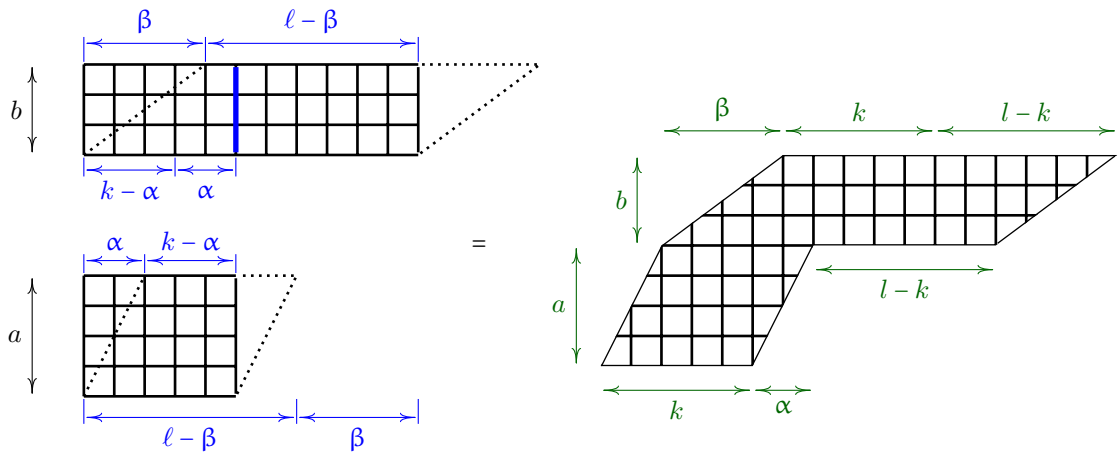

In the real plane, mark the points of the lattice $L$. A two-cylinder square-tiled surface can be unfolded in the plane as two glued parallelograms: the first one is constructed on the vectors $\vv{(\al,a)}$ and $\vv{(k,0)}$, and the second one is constructed on the vectors $\vv{(\be,b)}$ and $\vv{(\ell,0)}$, see Figure \ref{fig:twocylinderprimunfold}. Place such an unfolding as shown in Figure \ref{fig:twocylinderlattice}. Color those squares of the surface, the lower left vertex of which is a point of the lattice $L$. Let us prove that the set $\Delta$ of numbers of the colored squares is a block for the group $G$.

Indeed, consider an arbitrary square of the surface, and let $(p,q)$ be the coordinates of its  lower left vertex. Then, when acting by the permutations $s$ or $t$, this square will be sent to the square of the surface with the following coordinates of its  lower left vertex: 
$$(p+1,q) + \ep_{1}\vv{(k,0)} + \ep_{2}\vv{(\ell,0)} \qquad\textrm{or}\qquad
(p,q+1) - \ep_{3}\vv{(\al+\be,a+b)} - \ep_{4}\vv{(\be,b)}$$
respectively, where $\ep_{1}, \ep_{2}, \ep_{3}, \ep_{4}\in \set{0,1}$. Hence, under the action of any permutation $g\in G$ (which is of course a word on the letters $s$ and $t$), the coordinates of the lower left vertices of the colored squares of the surface will be translated by the same vector $\vv{v}\in \Z^{2}$ up to linear combinations of type $\ep_{1}\vv{(k,0)} + \ep_{2}\vv{(\ell,0)}$ and $-\ep_{3}\vv{(\al+\be,a+b)} - \ep_{4}\vv{(\be,b)}$. By definition, the lattice $L$  consists of all linear combinations
$$m_{1}\cdot \vv{(\al,a)} + m_{2}\cdot \vv{(\be,b)} + m_{3}\cdot \vv{(k,0)} + m_{4}\cdot \vv{(\ell,0)}\,.$$ 
Therefore, under the action of a permutation $g$, either the set of all marked squares will not change, or it will be sent to a set of squares not intersecting with it, \ie
$$g(\Delta) = \Delta \qquad\textrm{or}\qquad g(\Delta)\cap\Delta = \emptyset$$ 
depending on whether the vector $\vv{v}$ belongs to the lattice $L$ or not.
We conclude that $\Delta$ is a block for the group $G$.
Since the lattice $L$ is distinct from $\Z^{2}$, the block $\Delta$ is nontrivial and the group $G$  is not primitive.
\begin{figure}[htbp]
   \begin{center}
   \begin{tikzpicture}[scale=.4, >=latex,shorten >=0pt,shorten <=0pt,line width=1pt,bend angle=20, 
   		circ/.style={circle,inner sep=2pt,draw=black!40,rounded corners, text centered},
   		tblue/.style={dashed,blue,rounded corners}]
	
      	\draw[help lines] (-15,-8) grid (15,6);
      	\node[left] at (-11.8,-6.3){\scriptsize $(0,0)$};
%      	\node[right] at (3,-0.5){\small $A$};
%      	\node[above] at (2,2.3){\small $B$};
%      	\draw[densely dashed,green!60!black] (5,5) -- (5,0);
      
	\begin{scope}
      	\clip (-12,-6) -- ++(3,6) -- ++(4,4) -- ++(15,0) -- ++(-4,-4) -- ++(-6,0) -- ++(-3,-6);
	\foreach \x in {-8,...,8}
		\foreach \y in {-8,...,8}
			\fill[green!60!white] (\x*3 + \y*2,\y*2) rectangle +(1,1);
	\end{scope}

      	\clip (-15.5,-8.5) rectangle (15.5,6.5);
%      	\fill[green!60!white] (0,0) -- ++(2,2) -- ++(3,0) -- ++(-2,-2) -- ++(-3,0);
	\foreach \x in {-8,...,8}
		\foreach \y in {-8,...,8}
      			\fill[blue] (\x*3 + \y*2,\y*2) circle (5pt); 

      	\path[->] 	(-12,-6) 	edge node[below,yshift=1pt]{\scriptsize $\vv{(k,0)}$} +(9,0)
      					edge node[above,xshift=-12pt,yshift=-6pt]{\scriptsize $\vv{(\al,a)}$} +(3,6)
			(-9,0) 	edge node[above,xshift=-10pt]{\scriptsize $\vv{(\be,b)}$} +(4,4)
			(-5,4) 	edge node[above,xshift=0pt,yshift=-1pt]{\scriptsize $\vv{(\ell,0)}$} +(15,0);
			
	\draw (10,4) -- ++(-4,-4) -- ++(-6,0) -- ++(-3,-6);
   \end{tikzpicture}
   \captionwidth=14cm % changes the width of the caption
   \caption{The marked points belong to the lattice $L$ generated by the vectors with coordinates $(\al,a)$, $(\be,b)$, $(k,0)$ and $(\ell,0)$. The colored squares of the surface form a nontrivial block for the group $G$.}
   \label{fig:twocylinderlattice}
   \end{center} 
\end{figure}
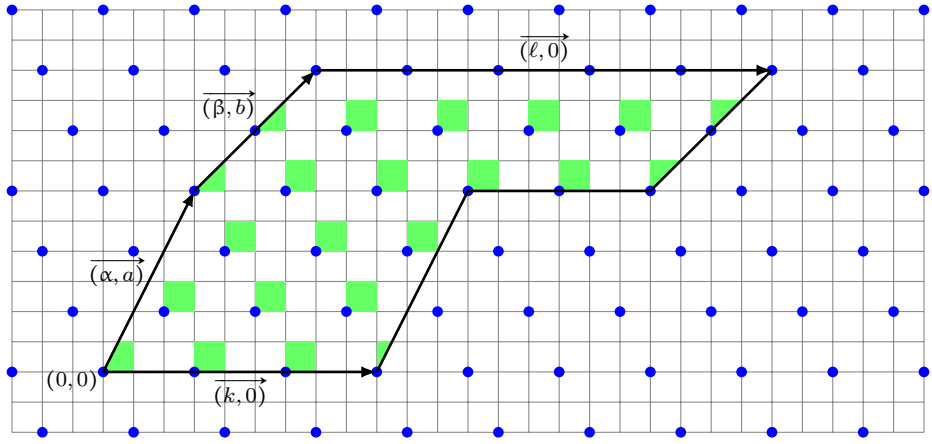

\vs  
\ifonly Suppose now that the conditions $a\land b = 1$ and $k\land \ell\land (a\be - b\al) = 1$ are satisfied, and let us prove that the square-tiled surface is primitive. Unwrap the surface in the real plane as shown in Figure \ref{fig:twocylprimeP}. Consider the set $P$ of points of the plane with integer coordinates, which are situated strictly inside the unfolded surface of on its left and lower borders, excluding the points with coordinates $(k,0)$, $(\al+\ell,a)$ and $(\al+\be,a+b)$. This set consists of  $n$  vertices of squares of the surface. Certainly, the action of the permutation group  $G = \group{s,t}$ on the set $\lsem 1,n\rsem$, on the set of squares of the surface and on the set of points $P$ are equivalent. 

\begin{figure}[htbp]
   \begin{center}
   \begin{tikzpicture}[scale=.6, >=latex,shorten >=0pt,shorten <=0pt,line width=1pt,bend angle=20, 
   		circ/.style={circle,inner sep=2pt,draw=black!40,rounded corners, text centered},
   		tblue/.style={dashed,blue,rounded corners}]
	
      	\draw[help lines] (-3,-2) grid (13,7);
      	\node[left] at (0,-.3){\scriptsize $(0,0)$};

	\foreach \x/\y/\n in 
		{0/0/1,1/0/2,2/0/3,3/0/4,
		1/1/5,2/1/6,3/1/7,4/1/8,
		1/2/9,2/2/10,3/2/11,4/2/12,5/2/13,6/2/14,7/2/15,
		2/3/16,3/3/17,4/3/18,5/3/19,6/3/20,7/3/21,8/3/22,
		3/4/23,4/4/24,5/4/25,6/4/26,7/4/27,8/4/28,9/4/29}
	{
		\fill[blue] (\x,\y) circle (3pt);
		\node[right,xshift=1pt,yshift=5pt] at (\x,\y) {\footnotesize \n};
	}
	      
      	\path[->,>=stealth] 	(0,0) 		edge node[below]{\scriptsize $\vv{(k,0)}$} +(4,0)
      					edge node[above,xshift=-13pt,yshift=-5pt]{\scriptsize $\vv{(\al,a)}$} +(1,2)
			(1,2) 		edge node[above,xshift=-10pt]{\scriptsize $\vv{(\be,b)}$} +(3,3)
			(4,5) 		edge node[above,xshift=0pt]{\scriptsize $\vv{(\ell,0)}$} +(7,0);
			
	\draw (11,5) -- ++(-3,-3) -- ++(-3,0) -- ++(-1,-2);
   \end{tikzpicture}
   \captionwidth=14cm % changes the width of the caption
   \caption{An unfolding of the square-tiled surface and the set $P$ of marked points.}
   \label{fig:twocylprimeP}
   \end{center} 
\end{figure}
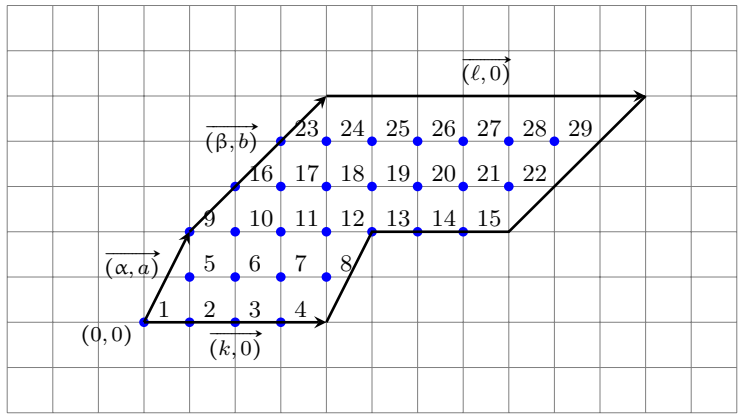

   Let $\Delta_{1}$ be an arbitrary block for $G$ containing at least two points. Since $G$ acts on the set $P$ transitively (the square-tiled surface is connected), then $P$ decomposes into several disjoint blocks of the same length:
\begin{equation*}%\label{eq:}
	P = \Delta_{1} \sqcup \ldots \sqcup \Delta_{m}\,,\quad \textrm{ where } m\in\N\, .
\end{equation*}    
Let us tile the plane with copies of the unfolded surface as shown in Figure \ref{fig:twocylprimePP}. An infinite number of equal parallelograms, however, will stay uncovered, we will call them \textbf{teleports}. In each copy of the surface we shall mark the same integer points as in the initial one. 
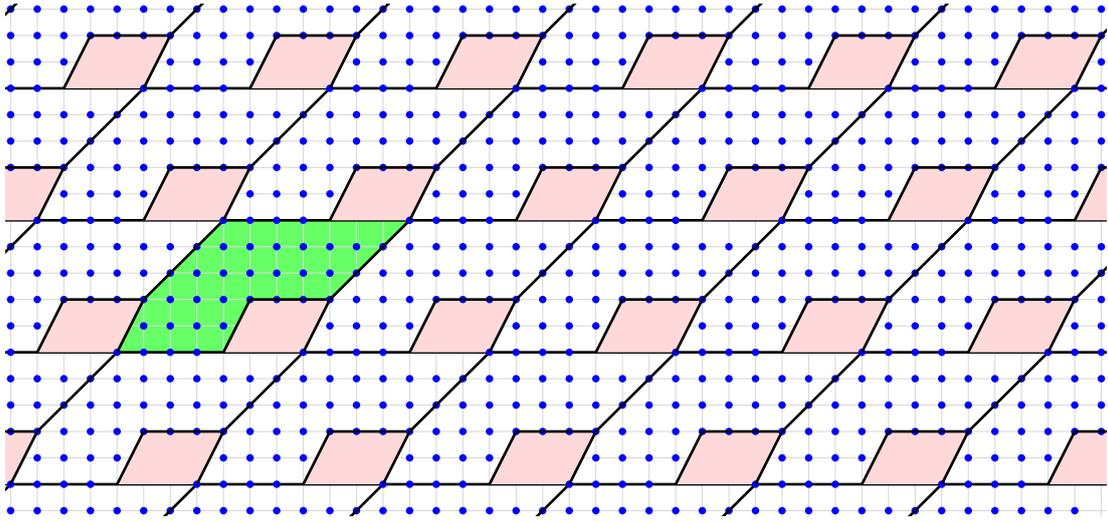
\begin{figure}[t]
   \begin{center}
   \begin{tikzpicture}[scale=.35, >=latex,shorten >=0pt,shorten <=0pt,line width=1pt,bend angle=20, 
   		circ/.style={circle,inner sep=2pt,draw=black!40,rounded corners, text centered},
   		tblue/.style={dashed,blue,rounded corners}]
	
	\clip (-4.2,-6.2) rectangle (37.2,13.2);

	\fill[green!60!white] (0,0) -- ++(1,2) -- ++(3,3) -- ++(7,0) -- ++(-3,-3) -- ++(-3,0) -- ++(-1,-2) -- ++(-4,0);
      	\draw[line width=.4pt,black!15!white] (-10,-10) grid (40,20);

	\foreach \A in {-21,-14,-7,0,7,14,21,28,35}
	\foreach \B/\shift in {-10/-8,-5/-4, 0/0, 5/4, 10/8, 15/12}
	{
		\begin{scope}[xshift=\A cm +\shift cm,yshift=\B cm]
		\fill[red!15!white] (4,0) -- ++(1,2) -- ++(3,0) -- ++(-1,-2) -- ++(-3,0);
		\foreach \x/\y/\n in 
			{0/0/1,1/0/2,2/0/3,3/0/4, 1/1/5,2/1/6,3/1/7,4/1/8,
			1/2/9,2/2/10,3/2/11,4/2/12,5/2/13,6/2/14,7/2/15,
			2/3/16,3/3/17,4/3/18,5/3/19,6/3/20,7/3/21,8/3/22,
			3/4/23,4/4/24,5/4/25,6/4/26,7/4/27,8/4/28,9/4/29}
			\fill[blue] (\x,\y) circle (4pt);

		\draw (0,0) -- ++(1,2) -- ++(3,3) -- ++(7,0) 
			++(-3,-3) -- ++(-3,0) -- ++(-1,-2);
		\end{scope}
	}
   \end{tikzpicture}
   \captionwidth=14cm % changes the width of the caption
   \caption{A tiling of  the plane with copies of the unfolded square-tiled surface. An infinite number of equal parallelograms will be uncovered (teleports). The marked points form the set $\widehat{P}$.}
   \label{fig:twocylprimePP}
   \end{center} 
\end{figure}
We conclude that the following points of the plane will be marked: 
\begin{itemize}
   \item[--] all integer points outside the teleports, 
   \item[--] all integer points on the upper and right sides of the teleports. 
\end{itemize}
Consider the set $\widehat{P}$ of marked points. Denote by $\widehat{s}$ and $\widehat{t}$ the elements of the symmetric group $Sym(\widehat{P})$ which permute the points $(p,q)\in \widehat{P}$ in the following way:
%\begin{eqnarray*}
\vs
$\qquad \widehat{s} (p,q) \;\;=\;\; 
  \begin{cases}
  \ (p+1,q), & \textrm{if }  (p+1,q) \textrm { is marked}; \\
  \ (p+1,q)+(\ell-k,0), & \textrm{otherwise}, \\
  \end{cases}
$ 
\vs
$
 \qquad \widehat{t} (p,q) \;\;=\;\;
  \begin{cases}
 	\ (p,q+1), & \textrm{if } (p,q+1)  \textrm{ is marked}; \\
 	\ (p,q+1) + (\al,a), & \textrm{if } (p,q+1) \textrm{ lies on the lower side of a teleport} \\
	& \textrm{and doesn't coincides with the right end of the side}, \\
	& \textrm{and } (p,q) \textrm{ is situated under the teleport}; \\		 
 	\ (p,q+1) - (\ell-k,0), & \textrm{if } (p,q+1) \textrm{ lies strictly inside a teleport}, \\
	& \textrm{and } (p,q) \textrm{ is situated to the right of the teleport}, \\
	& \textrm{including its right side}. 		 
  \end{cases}
$  
%\end{eqnarray*}

Let $\widehat{G}$ be the permutation group generatеd by the pair $\widehat{s}$\, and\, $\widehat{t}$. For all $i\in\lsem 1, m\rsem$, denote by $\widehat{\Delta}_{i}$ the subset $\widehat{P}$ consisting of the block $\Delta_{i}$ and all its duplicates in the copies of the unfolded surface. Then we have a decomposition of the set $\widehat{P}$ into disjoint parts:
  $$\widehat{P}=\widehat{\Delta}_1 \sqcup \ldots \sqcup \widehat{\Delta}_m\,,$$
  which is $\widehat{G}$-invariant, \ie under the action of any permutation from $\widehat{G}$, each part $\widehat{\Delta}_{i}$ is sent to some $\widehat{\Delta}_{j}$\,. (It suffices to check this for the permutations $\widehat{s}$ and $\widehat{t}$, since they generate the group $\widehat{G}$.) Hence, all subsets $\widehat{\Delta}_1,\ldots,\widehat{\Delta}_m \subseteq \widehat{P}$ are blocks for the group $\widehat{G}$. We are going to show that  $\widehat{\Delta}_1=\widehat{P}$, \ie $m=1$. 

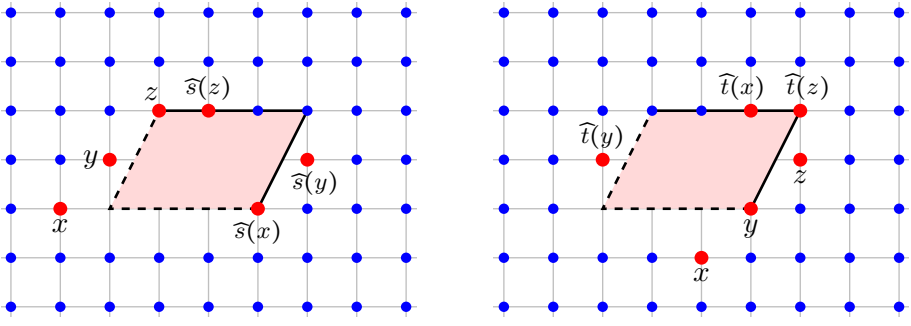
\begin{figure}[htbp]
   \begin{center}
   \begin{tikzpicture}[scale=.65, >=latex,shorten >=0pt,shorten <=0pt,line width=1pt,bend angle=20, 
   		circ/.style={circle,inner sep=2pt,draw=black!40,rounded corners, text centered},
   		tblue/.style={dashed,blue,rounded corners}]
	
	\clip (1.8,-2.2) rectangle (10.2,4.2);

      	\draw[line width=.5pt,black!25!white] (-6,-6) grid (15,15);

	\foreach \A in {-7,0,7,14}
	\foreach \B/\shift in {-5/-4, 0/0, 5/4}
	{
		\begin{scope}[xshift=\A cm +\shift cm,yshift=\B cm]
		\fill[fill=red!15!white] (4,0) -- ++(1,2) -- ++(3,0) -- ++(-1,-2) -- ++(-3,0);
		\draw[dashed] (7,0) -- ++(-3,0) -- ++(1,2);
		\draw (5,2) -- ++(3,0) -- ++(-1,-2);
		\foreach \x/\y/\n in 
			{0/0/1,1/0/2,2/0/3,3/0/4, 1/1/5,2/1/6,3/1/7,4/1/8,
			1/2/9,2/2/10,3/2/11,4/2/12,5/2/13,6/2/14,7/2/15,
			2/3/16,3/3/17,4/3/18,5/3/19,6/3/20,7/3/21,8/3/22,
			3/4/23,4/4/24,5/4/25,6/4/26,7/4/27,8/4/28,9/4/29}
			\fill[blue] (\x,\y) circle (3pt);

		\end{scope}
	}
	
	\fill[red] (3,0) node[below,black]{$x$} circle (4pt);
	\fill[red] (4,1) node[left,black]{$y$} circle (4pt);
	\fill[red] (5,2) node[above,black,xshift=-3pt]{$z$} circle (4pt);
	\fill[red] (7,0) node[below,black]{\footnotesize $\widehat{s}(x)$} circle (4pt);
	\fill[red] (8,1) node[below,black,xshift=3pt]{\footnotesize $\widehat{s}(y)$} circle (4pt);
	\fill[red] (6,2) node[above,black]{\footnotesize $\widehat{s}(z)$} circle (4pt);
   \end{tikzpicture}
   $\qquad$
   \begin{tikzpicture}[scale=.65, >=latex,shorten >=0pt,shorten <=0pt,line width=1pt,bend angle=20, 
   		circ/.style={circle,inner sep=2pt,draw=black!40,rounded corners, text centered},
   		tblue/.style={dashed,blue,rounded corners}]
	
	\clip (1.8,-2.2) rectangle (10.2,4.2);

      	\draw[line width=.5pt,black!25!white] (-6,-6) grid (15,15);

	\foreach \A in {-7,0,7,14}
	\foreach \B/\shift in {-5/-4, 0/0, 5/4}
	{
		\begin{scope}[xshift=\A cm +\shift cm,yshift=\B cm]
		\fill[fill=red!15!white] (4,0) -- ++(1,2) -- ++(3,0) -- ++(-1,-2) -- ++(-3,0);
		\draw[dashed] (7,0) -- ++(-3,0) -- ++(1,2);
		\draw (5,2) -- ++(3,0) -- ++(-1,-2);
		\foreach \x/\y/\n in 
			{0/0/1,1/0/2,2/0/3,3/0/4, 1/1/5,2/1/6,3/1/7,4/1/8,
			1/2/9,2/2/10,3/2/11,4/2/12,5/2/13,6/2/14,7/2/15,
			2/3/16,3/3/17,4/3/18,5/3/19,6/3/20,7/3/21,8/3/22,
			3/4/23,4/4/24,5/4/25,6/4/26,7/4/27,8/4/28,9/4/29}
			\fill[blue] (\x,\y) circle (3pt);

		\end{scope}
	}
	
	\fill[red] (6,-1) node[below,black]{$x$} circle (4pt);
	\fill[red] (7,0) node[below,black]{$y$} circle (4pt);
	\fill[red] (8,1) node[below,black]{$z$} circle (4pt);
	\fill[red] (7,2) node[above,black,xshift=-3pt]{\footnotesize $\widehat{t}(x)$} circle (4pt);
	\fill[red] (4,1) node[above,black]{\footnotesize $\widehat{t}(y)$} circle (4pt);
	\fill[red] (8,2) node[above,black,xshift=3pt]{\footnotesize $\widehat{t}(z)$} circle (4pt);
   \end{tikzpicture}
   \captionwidth=14cm % changes the width of the caption
   \caption{Actions of the permutations $\widehat{s}$ and $\widehat{t}$ on the set $\widehat{P}$.}
   \label{fig:twocylprimest}
   \end{center} \vspace{-0.3cm}
\end{figure}
  
  Connect two marked points $x$ and $y$ by a segment if they lie in the same block $\widehat{\Delta}_i$ for some $i\in\lsem 1, m\rsem$. Denote by $seg(\widehat{P})$ the set of obtained segments in the plane. To each segment $x y$, where $x\neq y$, corresponds two vectors $\vv{x y}$ and $\vv{y x}$ with integer coordinates, and to the segments $x x$ corresponds the zero-vector $\vv{0}$. Let $V$ be the set of the vectors corresponding to the segments from $seg(\widehat{P})$. 
  %Мы хотим prove,что $V$ contains vectorы $\vv{(1,0)}$ and $\vv{(0,1)}$.
  The set $V$ has the following properties:

\begin{itemize}
   \item[1.] \emph{If\, $\vv{x y} \in V$, then also\, $\vv{x y} \pm \vv{(\ell,0)} \in V$.} 
   
   Indeed, by construction of the blocks $\widehat{\Delta}_{i}$\,, together with each point $y$ a block also contains the points $y \pm \vv{(\ell,0)}$.

   \item[2.] \emph{If\, $\vv{x y} \in V$, then also\, $\vv{x y} \pm \vv{(\al+\be,a+b)} \in V$.} 
   
   Indeed, by construction of the blocks $\widehat{\Delta}_{i}$\,, together with each point $y$ a block also contains  the points $y \pm \vv{(\al+\be,a+b)}$.

   \item[3.] \emph{If\, $\vv{x y} \in V$ and the vector $\vv{x y}$ cannot be be presented as a linear combination  
   $$c\cdot\vv{(\ell,0)} + d\cdot \vv{(\al+\be,a+b)}$$
   with integer coefficients $c$ and $d$, then the vectors $\vv{x y} \pm \vv{(\ell-k,0)}$ also belong to $V$. }
   
   We can assume that the coordinates of the vector $\vv{x y}=\vv{(p,q)}$ satisfy the inequalities 
   $$|p| < l \quad \textrm{and}\quad |q|< a+b$$ 
   in view of already proven properties 1 and 2. Besides, the vector $\vv{x y}$ is nonzero, since it is not a linear combination of the vectors $\vv{(\al+\be,a+b)}$ and $\vv{(\ell,0)}$. Let us, moreover, suppose that $p\ge 0$ and $q\ge 0$ (other cases can be treated similarly). The vector $\vv{x y}$ corresponds to a segment $x y$ from the set $seg(\widehat{P})$. Consider the following two situations:
   
   a) Suppose that the vector $\vv{x y}$ is not horizontal (its ordinate is nonzero). Then we can apply the permutation $\widehat{s}$ and $\widehat{t}$ to the points $x$ and $y$ simultaneously, so that
   \begin{itemize}
	\item[--] the image $x_{1}$ of the point $x$ lies on a horizontal line intersecting the interior of teleports or their lower sides;
	
   	\item[--] the image $y_{1}$ of the point $y$ lies on a horizontal line not containing teleports or intersecting teleports by upper sides;
	
	\item[--] moreover, $\vv{x_{1}y_{1}} = \vv{x y}$.
   \end{itemize}
   Such a segment $x_{1}y_{1}$ will belong to the set $seg(\widehat{P})$, since if $x$ and $y$ are in the same blockе $\widehat{\Delta}_{i}$\,, then the images of these points for the action of an arbitrary permutation $\widehat{w}\in\widehat{G}$ also belong to the same block, namely $\widehat{\Delta}_{j}=\widehat{w}(\widehat{\Delta}_{i})$. Let us now apply the permutation $\widehat{s}$or $\widehat{s}^{\; -1}$ several times to the points $x_{1}$ and $y_{1}$\,, so that the point $x_{1}$ pass through the teleport situated respectively to the right or to the left of it (\cf Figure \ref{fig:twocylprimelk}). A new segment $x_{2}y_{2}$ will be an element of the set $seg(\widehat{P})$, and
   $$\vv{x_{2}y_{2}} = \vv{x y} - \vv{(\ell-k,0)}\qquad\textrm{or}\qquad \vv{x_{2}y_{2}} = \vv{x y} + \vv{(\ell-k,0)}$$
   respectively, as required.
 
 \begin{figure}[htbp]
   \begin{center}
   \begin{tikzpicture}[scale=.65, >=latex,shorten >=0pt,shorten <=0pt,line width=1pt,bend angle=20, 
   		circ/.style={circle,inner sep=2pt,draw=black!40,rounded corners, text centered},
   		tblue/.style={dashed,blue,rounded corners}]
	
	\clip (-8.2,-1.2) rectangle (13.2,6.2);

      	\draw[line width=.4pt,black!15!white] (-10,-6) grid (15,15);

	\foreach \A in {-14,-7,0,7,14}
	\foreach \B/\shift in {-5/-4, 0/0, 5/4}
	{
		\begin{scope}[xshift=\A cm +\shift cm,yshift=\B cm]
		\fill[fill=red!15!white] (4,0) -- ++(1,2) -- ++(3,0) -- ++(-1,-2) -- ++(-3,0);
		\draw[dashed] (7,0) -- ++(-3,0) -- ++(1,2);	
		\draw (5,2) -- ++(3,0) -- ++(-1,-2);	
		\foreach \x/\y/\n in 
			{0/0/1,1/0/2,2/0/3,3/0/4, 1/1/5,2/1/6,3/1/7,4/1/8,
			1/2/9,2/2/10,3/2/11,4/2/12,5/2/13,6/2/14,7/2/15,
			2/3/16,3/3/17,4/3/18,5/3/19,6/3/20,7/3/21,8/3/22,
			3/4/23,4/4/24,5/4/25,6/4/26,7/4/27,8/4/28,9/4/29}
			\fill[blue] (\x,\y) circle (3pt);

		\end{scope}
	}

      	\path[->,dashed,>=latex] 	
		(-3,0) edge node[below]{\scriptsize $\vv{(\ell-k,0)}$} +(3,0)
		(4,0) 	edge node[below]{\scriptsize $\vv{(\ell-k,0)}$} +(3,0);

	\fill[red] (6,4) node[above,black]{$y_{1}$} circle (4pt);
	\fill[red] (4,4) node[above,black]{$y_{2}$} circle (4pt);
	\fill[red] (9,4) node[above,black]{$y_{2}$} circle (4pt);

	\path[->,>=arcs,green!50!black] 	
			(2,1) edge +(4,3)
			(-3,1) edge +(7,3)
			(8,1) edge +(1,3);

	\fill[red] (-3,1) node[below,xshift=-4pt,black]{$x_{2}$} circle (4pt);
	\fill[red] (8,1) node[below,black]{$x_{2}$} circle (4pt);
	\fill[red] (2,1) node[below,black]{$x_{1}$} circle (4pt);
   \end{tikzpicture}
   \captionwidth=14cm % changes the width of the caption
   \caption{Vectors\, $\overline{x_{1}y_{1}}$\; and\; $\overline{x_{2}y_{2}}=\overline{x_{1}y_{1}} \pm \overline{(\ell-k,0)}$ belong to the set $V$.}
   \label{fig:twocylprimelk}
   \end{center} \vspace{-0.3cm}
\end{figure}
   
   b) Suppose that the vector $\vv{x y}$ is horizontal. If $\vv{x y} + \vv{(\ell-k,0)} = \vv{0}$ then we are done, since by definition $\vv{0}\in V$. Let now $\vv{x y} + \vv{(\ell-k,0)}$ be different from $\vv{0}$. Since the vector $\vv{x y}$ is not collinear to the vector $\vv{(\al,a)}$, then according to the propertу 4.a) below, there exists a segment $x_{1} y_{1}\in seg(\widehat{P})$ such that 
   $$\vv{x_{1}y_{1}} = \vv{x y} + \vv{(\al,a)}\,.$$
   The vector $\vv{x_{1}y_{1}}$ is no more horizontal, and so by already proven property 3.a) we get a segment $x_{2}y_{2}\in seg(\widehat{P})$, for wich
   $$\vv{x_{2}y_{2}} \;=\; \vv{x_{1} y_{1}} + \vv{(\ell-k,0)} \;=\; \vv{x y} + \vv{(\ell-k,0)} + \vv{(\al,a)}\,.$$
   The vector $\vv{x y} + \vv{(\ell-k,0)}$ is not horizontal and not equal to $\vv{0}$, from where follows that $\vv{x_{2}y_{2}}$ and $\vv{(\al,a)}$ are not collinear. From the property 4.a) for the vector $\vv{x_{2}y_{2}}$ follows that the vector
   $$\vv{x_{3}y_{3}} \;=\; \vv{x_{2} y_{2}} - \vv{(\al,a)} \;=\; \vv{x y} + \vv{(\ell-k,0)}$$
   also belongs to the set $V$.
   
   To construct the vector $\vv{x y} - \vv{(\ell-k,0)}$, one proceeds by analogy.
   
%   Then мы можем подействовать permutationми $\widehat{s}$ and $\widehat{t}$ на points $x$ and $y$ simultaneously так, чтобы их образы $x_{1}$ and $y_{1}$ respectively удовлетворяли the following conditionям:
%   \begin{itemize}
%	\item[--] the point $x_{1} + (0,1)$ отмечена;
%	
%   	\item[--] the point $y_{1} + (0,1)$ не отмечена, \ie лежит на нижней стороне teleportа and не coincides with правым концом этой стороны;
%	
%	\item[--] при этом $\vv{x_{1}y_{1}} = \vv{x y}$.
%   \end{itemize}
%   При действии permutationи $\widehat{t}$ на концы отрезка $x_{1}y_{1}$, мы получим segment $x_{2}y_{2}$ со the propertyм
%   $$\vv{x_{2}y_{2}} = \vv{x_{1}y_{1}} + \vv{(\al,a)}.$$
%   Применим несколько раз permutation $\widehat{s}$or $\widehat{s}^{\; -1}$ к points $x_{2}$ and $y_{2}$ до тех пор, пока the point $x_{2}$ не пройдет через teleport, находящийся respectively справаor слева от нее. Полученный segment $x_{2}y_{2}$ будет elementом setа $seg(\widehat{P})$, for которого
%   $$\vv{x_{2}y_{2}} = \vv{x y} - \vv{(\ell-k,0)}\quad\textrm{или}\quad \vv{x_{2}y_{2}} = \vv{x y} + \vv{(\ell-k,0)}$$
%   respectively.........   
%   
%   Разумеется, отрезки $x_{1}y_{1}$, $x_{2}y_{2}$ and $x_{3}y_{3}$ belong to setу $seg(\widehat{P})$.

   \item[4.] \emph{If\, $\vv{x y} \in V$ and the vector $\vv{x y}$ cannot be presented as a linear combination  
   $$c\cdot\vv{(\ell,0)} + d\cdot \vv{(\al+\be,a+b)}$$
   with integer coefficients $c$ and $d$, then the vectors $\vv{x y} \pm \vv{(\al,a)}$ also belong to $V$. }
   
   As for the property 3, we consider two situations:
   
   a) The vector $\vv{x y}$ is not collinear to the vector $\vv{(\al,a)}$. One can show this  by analogy with the property 3.a).
   
   b) The vector $\vv{x y}$ is collinear to the vector $\vv{(\al,a)}$. A proof uses the properties 3.a) and 4.a).  
\end{itemize}

Let us show now that $V=\Z^{2}$. By the initial assumption, the block $\Delta_{1}$ for $G$ contains at least two points $x, y\in P$. These points also lie in the block $\widehat{\Delta}_{1}$ for the group $\widehat{G}$. Hence, in the set $V$ there is at least one vector which cannot be presented as a  linear combination  $c\cdot\vv{(\ell,0)} + d\cdot \vv{(\al+\be,a+b)}$ with integer coefficients $c$ and $d$, namely the vector $\vv{x y}$.

\vs
  Let $\vv{v}\in \Z^{2}$ be an arbitrary vector with integer coordinates. According to Lemma \ref{lemm:twocylinderprim0}, the vectors 
  $$\vv{(\ell,0)}, \qquad 
  \vv{(\al+\be,a+b)} = \vv{(\al,a)} + \vv{(\be,b)}, \qquad 
  \vv{(\ell - k,0)}  = \vv{(\ell,0)} - \vv{(k,0)} \quad\textrm{ and }\quad 
  \vv{(\al,a)}$$
generate the lattice $\Z^{2}$. Therefore, the vector $\vv{v} - \vv{x y}\in \Z^{2}$ is of the form 
$$\vv{v} - \vv{x y} \;=\; m_1 \cdot \vv{(\ell,0)} + m_2 \cdot \vv{(\al+\be,a+b)} + m_3 \cdot \vv{(\ell - k,0)} + m_{4}\cdot \vv{(\al,a)}$$
for some integers $m_{1}$, $m_{2}$, $m_{3}$ and $m_{4}$. Among all such presentations, we shall choose that, in which the sum $|m_{3}| + |m_{4}|$ is minimal. From the properties 3 and 4 follows that the vector 
$$\vv{w} \;=\; \vv{x y} + m_1 \cdot \vv{(\ell,0)} + m_2 \cdot \vv{(\al+\be,a+b)}$$
belongs to the set $V$. 
Since the vector $\vv{x y}$ is not a linear combination  $c\cdot\vv{(\ell,0)} + d\cdot \vv{(\al+\be,a+b)}$ with integer $c$ and $d$, then this is also true for the vector $\vv{w}$. We may apply the property 3 for $|m_{3}|$ times, and after that the property 4 for $|m_{4}|$ times, in order to show that  together with the vector $\vv{w}$ the set $V$ also contains the vector 
$$\vv{v} \;=\; \vv{w} + m_3 \cdot \vv{(\ell - k,0)} + m_{4}\cdot \vv{(\al,a)}.$$
Indeed, in view of the  minimality of the expression $|m_{3}| + |m_{4}|$\,, each of the $|m_{3}|+ |m_{4}| - 1$ times that we will apply the properties 3 and 4, only those vectors will occur which are not linear combinations $c\cdot\vv{(\ell,0)} + d\cdot \vv{(\al+\be,a+b)}$ for any  integers $c$ and $d$. 

We conclude that the set $V$ contains all vector with integer coordinates, and in particular, the vectors $\vv{(1,0)}$ and $\vv{(0,1)}$. This means that any two adjacent (vertically or horizontally) marked points lie in the same block, \ie \emph{all} marked points lie in the same block: $\widehat{\Delta}_{1}=\widehat{P}$.  Hence, the block $\Delta_{1}$ for $G$ is trivial (coincides with the set $P$), from where follows the primitivity of the group $G$.
\end{proof}

%%%%%%%%%%%%%%%%%%%%%%%%%%%%%%%%%%%%%%%%%%%%%%

\vs
\begin{lemm}\label{lemm:twocylinderprim1} 
Let $a, b, k, \ell$ be positive integers with $a$ and $b$ being coprime. The number of distinct pairs $(\al,\be)$ of integers such that
\begin{equation}\label{eq:akbl}
0\le \al < k, \quad 0\le \be < \ell \qquad\textrm{and}\qquad k\land \ell\land (a\be - b\al) = 1,
\end{equation}
is equal to\, $k \ell\,\frac{\phi(k\land \ell)}{k\land \ell}$.
\end{lemm}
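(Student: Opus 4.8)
The plan is to fix $d = k\land \ell$ and reinterpret the coprimality condition as a statement about a single residue. Since $d$ divides both $k$ and $\ell$, any common divisor of $k$, $\ell$ and $a\be - b\al$ is precisely a common divisor of $d$ and $a\be - b\al$; hence the condition $k\land \ell \land (a\be - b\al) = 1$ is equivalent to requiring the residue $r = (a\be - b\al)\mmod d$ to be coprime to $d$, \ie to be a unit of $\Z/d\Z$. So I would partition the $k\ell$ pairs $(\al,\be)$ in the box $\lsem 0, k-1\rsem \times \lsem 0, \ell-1\rsem$ according to the value of $r$, and then count how many pairs fall into the good residue classes.

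The key step is to show that each residue class $r\in\Z/d\Z$ is attained by \emph{exactly} $k\ell/d$ pairs. As $\al$ runs over $\lsem 0, k-1\rsem$, its reduction modulo $d$ hits each class of $\Z/d\Z$ exactly $k/d$ times (because $d\mid k$), and likewise the reduction of $\be$ hits each class exactly $\ell/d$ times (because $d\mid\ell$). Consequently, the number of pairs $(\al,\be)$ with $(a\be - b\al)\equiv r \pmod d$ equals $\tfrac{k}{d}\cdot\tfrac{\ell}{d}\cdot N(r)$, where $N(r)$ denotes the number of solutions $(\bar\al,\bar\be)\in(\Z/d\Z)^2$ of $a\bar\be - b\bar\al \equiv r \pmod d$.

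To compute $N(r)$, I would view $\psi\colon (\Z/d\Z)^2 \to \Z/d\Z$, $(\bar\al,\bar\be)\mapsto a\bar\be - b\bar\al$, as a homomorphism of abelian groups. Its image is the subgroup generated by $\psi(0,1)=a$ and $\psi(1,0)=-b$, which equals $(a\land b\land d)\,\Z/d\Z = \Z/d\Z$ precisely because $a\land b = 1$. Thus $\psi$ is surjective, so all of its fibers are cosets of the kernel and therefore have the common cardinality $d^2/d = d$; that is, $N(r)=d$ for every $r$. Hence each residue class is attained by exactly $\tfrac{k}{d}\cdot\tfrac{\ell}{d}\cdot d = \tfrac{k\ell}{d}$ pairs. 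This surjectivity is the one and only place where the hypothesis $a\land b = 1$ is used, and it is the crux of the argument; everything surrounding it is bookkeeping about uniform distribution of residues.

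Finally, by the definition of Euler's totient function the number of residues $r\in\lsem 0, d-1\rsem$ coprime to $d$ is $\phi(d)$. Summing the uniform count $\tfrac{k\ell}{d}$ over exactly these $\phi(d)$ admissible classes yields
\[
  \phi(d)\cdot\frac{k\ell}{d} \;=\; k\ell\,\frac{\phi(k\land \ell)}{k\land \ell},
\]
which is the asserted formula.
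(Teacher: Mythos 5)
Your proof is correct, and its key step takes a genuinely different route from the paper's. Both arguments begin identically: since $d=k\land \ell$ divides $k$ and $\ell$, the condition $k\land \ell\land(a\be-b\al)=1$ depends only on $(\al,\be)$ modulo $d$, so the count over the box $[0,k[\,\times\,[0,\ell[$ is $\frac{k}{d}\cdot\frac{\ell}{d}$ times the count over $(\Z/d\Z)^{2}$ (the paper phrases this as reducing to the square $K=[0,d[\,\times\,[0,d[$). You diverge at the crux, namely showing that $a\be-b\al$ is equidistributed modulo $d$. The paper uses $a\land b=1$ to build a matrix $A\in\slzz$ with a row $(-b,a)$ and then counts ``$d$-prime'' lattice points in the parallelogram $A(K)$ by a geometric argument: the generators $T$ and $U$ of $\slzz$ transform $K$ by translating a piece of it through vectors with coordinates divisible by $d$, so the count of points whose first coordinate is coprime to $d$ is invariant, giving $f(d,d)=d\,\phi(d)$; this also silently relies on the (unproved there) fact that $T$ and $U$ generate $\slzz$. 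You instead observe that $\psi(\bar\al,\bar\be)=a\bar\be-b\bar\al$ is a homomorphism of abelian groups $(\Z/d\Z)^{2}\to\Z/d\Z$ whose image is the subgroup generated by $a$ and $-b$, hence all of $\Z/d\Z$ precisely because $a\land b=1$; surjectivity forces every fiber to be a coset of the kernel, of common size $d^{2}/d=d$, so each of the $\phi(d)$ unit residues is hit exactly $d$ times, and the total is $\phi(d)\cdot\frac{k}{d}\cdot\frac{\ell}{d}\cdot d = k\ell\,\frac{\phi(k\land\ell)}{k\land\ell}$. Your fiber-counting argument is shorter and more elementary, replacing the tiling picture and the generation fact for $\slzz$ with the first isomorphism theorem, and it isolates cleanly the single use of $a\land b=1$; what the paper's version buys is a geometric reading of the same invariance, consonant with the lattice and origami viewpoint of the neighboring Lemmas (in particular Lemma \ref{lemm:twocylinderprim0} and Lemma \ref{lemm:twocylinderprim}), but as a proof of this lemma yours is the tighter one.
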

\begin{proof} 
Let $k=k' d$ and $\ell=\ell' d$, where $d=k\land l$ is the greatest common divisor. Denote by $f(k,l)$ the number of pairs $(\al,\be)$ of integers satisfying the conditions \eqref{eq:akbl}. Notice that if for some $\al', \be'\in\Z$ the differences $\al - \al'$ and $\be - \be'$ are divisible by $d$\,, then 
$$d\land (a\be - b\al) = d \land \bigg( a(\be-\be') - b(\al-\al') + a\be' - b\al' \bigg) = d \land (a\be' - b\al') =1.$$
\begin{figure}[htbp]
   \begin{center}
   \begin{tikzpicture}[scale=.4, >=latex,shorten >=1pt,shorten <=1pt,line width=1pt,bend angle=20, 
   		circ/.style={circle,inner sep=2pt,draw=black!40,rounded corners, text centered},
   		tblue/.style={dashed,blue,rounded corners}]

      \fill[green!60!white] (0,0) rectangle (4.9,4.9);
      \draw[help lines] (-2,-2) grid (17,12);
      \draw[->,line width=.8pt] (-2,0) -- (17,0);
      \draw[->,line width=.8pt] (0,-2) -- (0,12);
      \node[left] at (0,-.65){$0$};
      \draw (0,0) -- (15,0); 
      \foreach \y in {5} 
      	\draw (0,\y) node[left]{$\y$} -- ++(15,0);
      \draw[dashed] (0,10) node[left]{$10$} -- ++(15,0);

      \draw (0,0) -- (0,10);
      \foreach \x in {5,10}
       \draw (\x,0) node[below]{$\x$} -- ++(0,10);
      \draw[dashed] (15,0) node[below]{$15$} -- ++(0,10);
      
      \foreach \xsh in {0,5,10} {
      \foreach \ysh in {0,5}{
      	\begin{scope}[xshift=\xsh cm, yshift = \ysh cm]
%		\foreach \y/\x in {0/1,0/2,0/3,0/4,1/0,1/1,1/3,1/4,2/0,2/1,2/2,2/3,3/0,3/2,3/3,3/4,4/0,4/1,4/2,4/4}
		\foreach \y/\x in {0/1,0/2,0/3,0/4,1/0,1/1,1/2,1/3,2/0,2/1,2/2,2/4,3/0,3/1,3/3,3/4,4/0,4/2,4/3,4/4}
      			\draw[blue] (\x,\y) circle (2.5pt); 
	\end{scope}
      }
      }
   \end{tikzpicture}
   \caption{Integer points $(\al,\be)\in[0,15[\times[0,10[$ such that $15\land 10\land (3\be - 2\al)=1$.}
   \label{fig:integerpoints}
   \end{center} %\vspace{-0.3cm}
\end{figure}
Thus, in order to count the number of integer points $(\al,\be)$ in the semi-open rectangle $[0,k[\times [0,\ell[$ with condition $d\land (a\be - b\al)$, it suffices to found the number of such points in the square $[0,d[\times [0,d[$. More precisely, by proving the equality $f(d,d) = d\cdot\phi(d)$, we will get the required: 
$$f(k,l) = k'\ell' f(d,d) = k'\ell' d\phi(d) = (k'd) (\ell'd) \frac{\phi(d)}{d} = k \ell\,\frac{\phi(k\land \ell)}{k\land \ell}.$$
See an example in Figure \ref{fig:integerpoints} for $a=3$, $b=2$, $k=15$, $\ell=10$, $d = 5$.

\begin{figure}[htbp]
   \begin{center}
   \begin{tikzpicture}[scale=.35, >=latex,shorten >=0pt,shorten <=0pt,line width=1pt,bend angle=20, 
   		circ/.style={circle,inner sep=2pt,draw=black!40,rounded corners, text centered},
   		tblue/.style={dashed,blue,rounded corners}]
	
      \node at (0,-6) {};	
      \fill[green!60!white] (0,0) rectangle (4.9,4.9);
      \draw[help lines] (-2,-2) grid (7,7);
      \draw[->,line width=.6pt] (-2,0) -- (7,0);
      \draw[->,line width=.6pt] (0,-2) -- (0,7);
      \node[left] at (0,-.65){$0$};
      \draw[green!60!black] (0,0) -- (5,0); 
      \draw[densely dashed,green!60!black] (5,5) -- (0,5);
      \draw (0,5) node[left]{$5$};

      \draw[green!60!black] (0,0) -- (0,5);
      \draw[densely dashed,green!60!black] (5,5) -- (5,0);
      \draw (5,0) node[below]{$5$};
      
		\foreach \y/\x in {0/1,0/2,0/3,0/4,1/0,1/1,1/2,1/3,2/0,2/1,2/2,2/4,3/0,3/1,3/3,3/4,4/0,4/2,4/3,4/4}
      			\draw[blue] (\x,\y) circle (3pt); 
   \end{tikzpicture}
   \begin{tikzpicture}[scale=.4, >=latex,shorten >=0pt,shorten <=0pt,line width=1pt,bend angle=20, 
   		circ/.style={circle,inner sep=2pt,draw=black!40,rounded corners, text centered},
   		tblue/.style={dashed,blue,rounded corners}]
	\node (a) at (0,-8) {};	
	\path[->] (0,0) edge[bend left,bend angle=30] node[above]{\small $A=\begin{pmatrix} -2 & 3 \\ 1 & -2 \end{pmatrix}$} (6,0);
   \end{tikzpicture}
   \begin{tikzpicture}[scale=.35, >=latex,shorten >=0pt,shorten <=0pt,line width=1pt,bend angle=20, 
   		circ/.style={circle,inner sep=2pt,draw=black!40,rounded corners, text centered},
   		tblue/.style={dashed,blue,rounded corners}]

      \fill[green!60!white] (-10,5) -- (0,0) -- (15,-10) -- (5,-5) -5 (-10,5);
      \draw[help lines] (-12,-12) grid (17,7);
      \draw[->,line width=.6pt] (-12,0) -- (17,0);
      \draw[->,line width=.6pt] (0,-12) -- (0,7);
      \node[right] at (0,.65){$0$};
      \node[right] at (0,5){$5$};
      \node[below,xshift=-.1cm] at (-10,0) {$-10$};
      \node[below,xshift=-.1cm] at (-5,0) {$-5$};
      \node[above] at (5,0) {$5$};
      \node[above] at (15,0) {$15$};
      \node[left] at (0,-5) {$-5$};
      \node[left] at (0,-10) {$-10$};
      
%	\begin{scope}[cm={-2,3,1,-2,(0,0)}]
		\draw[green!60!black] (0,0) -- (-10,5); 
		\draw[densely dashed,green!60!black] (-10,5) -- (5,-5);
		\draw[green!60!black] (0,0) -- (15,-10);
		\draw[densely dashed,green!60!black] (5,-5) -- (15,-10);
		      
		\foreach \y/\x in {0/1,0/2,0/3,0/4,1/0,1/1,1/2,1/3,2/0,2/1,2/2,2/4,3/0,3/1,3/3,3/4,4/0,4/2,4/3,4/4}
		      			\draw[blue] (-2*\x + 3*\y,\x - 2*\y) circle (3pt); 
%	\end{scope}
   \end{tikzpicture}
   \caption{Integer $5$-prime points in the parallelogram $A([0,5[\times[0,5[)$.}
   \label{fig:dprime}
   \end{center} %\vspace{-0.3cm}
\end{figure}
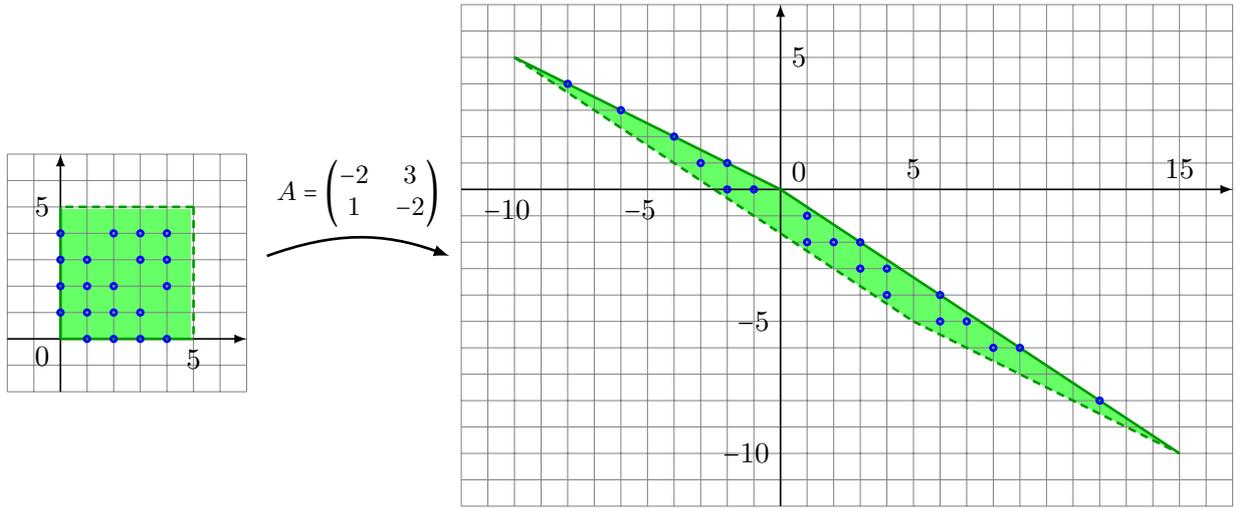
So, let us show that the number of pairs $(\al,\be)\in\Z^{2}$ with conditions
\begin{equation*}%\label{eq:adbd}
0\le \al < d, \quad 0\le \be < d \qquad\textrm{and}\qquad d\land (a\be - b\al) = 1
\end{equation*}
is equal to $d\phi(d)$ for any natural $d$. The fact that $a$ and $b$ are coprime guarantees  the existence of integers  $a'$ and $b'$ such that
$a' a - b' b = 1$, \ie the determinant of the integer matrix
$$A = \begin{pmatrix} -b & a \\ -a' & b' \end{pmatrix}$$
equals $1$.
An integer point $(x,y)$ will be called \deff{$d$-prime} if its first coordinate $x$ is coprime with $d$. We want to find the number of integer points $(\al,\be)$ in the semi-open square $K=[0,d[\times [0,d[$, for which the image
$$A\cdot \begin{pmatrix} \al \\ \be \end{pmatrix} = \begin{pmatrix} a\be - b\al \\ b'\be - a'\al  \end{pmatrix}$$
is a $d$-prime point: $d \land (a\be - b\al) =1$. In other words, we are interested in the  number of $d$-prime points in the parallelogram $A(K)$, \cf Figure \ref{fig:dprime}. 
Remark at once that the number of $d$-prime integer points $(x,y)$ in the square $K$ is obviously equal to $\phi(d)\cdot d$\,, since the first coordinate $x\in [0,d[$ can take $\phi(d)$ values for each of the $d$ values of the second coordinate $y\in [0,d[$. 

Let us show now that under the action of any integer matrix\footnote{One denotes by $\slzz$ the special linear group over $\Z$, \ie the group of integer matrices $2\times 2$ with determinant \nolinebreak $1$.} $A\in\slzz$ on $K$,  the number of $d$-prime integer points doesn't change. This is true for the matrices $T=\begin{pmatrix} 1&1 \\ 0&1 \end{pmatrix}$ and $U=\begin{pmatrix} 1&0 \\ 1&1 \end{pmatrix}$ as shown in Figure \ref{fig:Tdprime}. Indeed, the parallelograms $T(K)$ and $U(K)$ are obtained from the square $K$ via translation of one of its parts by the vector $(d,0)$ and $(0,d)$ respectively, the coordinates of which are divisible by $d$ (this does not change the number of $d$-prime integer points).
\begin{figure}[htbp]
   \begin{center}
   \begin{tikzpicture}[scale=.35, >=latex,shorten >=0pt,shorten <=0pt,line width=1pt,bend angle=20, 
   		circ/.style={circle,inner sep=2pt,draw=black!40,rounded corners, text centered},
   		tblue/.style={dashed,blue,rounded corners}]

    \begin{scope}[cm={0,1,1,0,(0,0)}]
      \fill[green!60!white] (0,0) -- (4.9,4.9) -- ++(4.9,0) -- (4.9,0) -- (0,0);
	\fill[red!20!white] (0,0.3) -- (0,4.9) -- ++(4.6,0) -- (0,0.3);
      \draw[help lines] (-2,-2) grid (12,7);
	\draw[densely dashed,red!20!white] (0,5) -- ++(5,0);
      \draw[->,line width=.6pt] (-2,0) -- (12,0);
      \draw[->,line width=.6pt] (0,-2) -- (0,7);
      \draw[green!60!black] (0,0) -- (5,0); 
      \draw[densely dashed,green!60!black] (5,5) -- (10,5);
      \draw[green!60!black] (0,0) -- (5,5);
      \draw[densely dashed,green!60!black] (10,5) -- (5,0);
      
	\foreach \y in {1,...,4}{
		\foreach \x in {0,...,4}
      			\draw[blue] (\x,\y) circle (3pt); 
	}

	\draw[->,red,very thick] (2.5,5) -- ++(0,1) -- ++(5,0) -- ++(0,-1);

       \clip (0,0) -- (4.5,4.5) -- (9,4.5) -- (4.5,0) -- (0,0);
	\foreach \y in {1,...,4}{
		\foreach \x in {0,...,4}
      			\draw[blue] (\x+5,\y) circle (3pt); 
	}
    \end{scope}		
      \node[left] at (0,-.65){$0$};
      \draw (0,5) node[left]{$5$};
      \draw (5,0) node[below]{$5$};
      \draw (0,10) node[left]{$10$};
   \end{tikzpicture}
%%%%
   \begin{tikzpicture}[scale=.4, >=latex,shorten >=0pt,shorten <=0pt,line width=1pt,bend angle=20, 
   		circ/.style={circle,inner sep=2pt,draw=black!40,rounded corners, text centered},
   		tblue/.style={dashed,blue,rounded corners}]
	\node (a) at (0,-4) {};	
	\path[<-] (0,0) edge[bend left,bend angle=30] node[above]
		{\small $U=\begin{pmatrix} 1& 0 \\ 1 & 1 \end{pmatrix}$} (5.5,0);
   \end{tikzpicture}
%%%%
   \begin{tikzpicture}[scale=.35, >=latex,shorten >=0pt,shorten <=0pt,line width=1pt,bend angle=20, 
   		circ/.style={circle,inner sep=2pt,draw=black!40,rounded corners, text centered},
   		tblue/.style={dashed,blue,rounded corners}]
	
      \fill[green!60!white] (0,0) rectangle (4.9,4.9);
      \draw[help lines] (-2,-2) grid (7,7);
      \draw[->,line width=.6pt] (-2,0) -- (7,0);
      \draw[->,line width=.6pt] (0,-2) -- (0,7);
      \node[left] at (0,-.65){$0$};
      \draw[green!60!black] (0,0) -- (5,0); 
      \draw[densely dashed,green!60!black] (5,5) -- (0,5);
      \draw (0,5) node[left]{$5$};

      \draw[green!60!black] (0,0) -- (0,5);
      \draw[densely dashed,green!60!black] (5,5) -- (5,0);
      \draw (5,0) node[below]{$5$};
      
	\foreach \x in {1,...,4}{
		\foreach \y in {0,...,4}
      			\draw[blue] (\x,\y) circle (3pt); 
	}
   \end{tikzpicture}
%%%%
   \begin{tikzpicture}[scale=.4, >=latex,shorten >=0pt,shorten <=0pt,line width=1pt,bend angle=20, 
   		circ/.style={circle,inner sep=2pt,draw=black!40,rounded corners, text centered},
   		tblue/.style={dashed,blue,rounded corners}]
	\node (a) at (0,-4) {};	
	\path[->] (0,0) edge[bend left,bend angle=30] node[above]
		{\small $T=\begin{pmatrix} 1& 1 \\ 0 & 1 \end{pmatrix}$} (5.5,0);
   \end{tikzpicture}
%%%%
   \begin{tikzpicture}[scale=.35, >=latex,shorten >=0pt,shorten <=0pt,line width=1pt,bend angle=20, 
   		circ/.style={circle,inner sep=2pt,draw=black!40,rounded corners, text centered},
   		tblue/.style={dashed,blue,rounded corners}]

      \fill[green!60!white] (0,0) -- (4.9,4.9) -- ++(4.9,0) -- (4.9,0) -- (0,0);
	\fill[red!20!white] (0,0.3) -- (0,4.9) -- ++(4.6,0) -- (0,0.3);
      \draw[help lines] (-2,-2) grid (12,7);
	\draw[densely dashed,red!20!white] (0,5) -- ++(5,0);
      \draw[->,line width=.6pt] (-2,0) -- (12,0);
      \draw[->,line width=.6pt] (0,-2) -- (0,7);
      \node[left] at (0,-.65){$0$};
      \draw[green!60!black] (0,0) -- (5,0); 
      \draw[densely dashed,green!60!black] (5,5) -- (10,5);
      \draw[green!60!black] (0,0) -- (5,5);
      \draw[densely dashed,green!60!black] (10,5) -- (5,0);
      \draw (0,5) node[left]{$5$};
      \draw (5,0) node[below]{$5$};
      \draw (10,0) node[below]{$10$};
      
	\foreach \x in {1,...,4}{
		\foreach \y in {0,...,4}
      			\draw[blue] (\x,\y) circle (3pt); 
	}

	\draw[->,red,very thick] (2.5,5) -- ++(0,1) -- ++(5,0) -- ++(0,-1);

       \clip (0,0) -- (4.5,4.5) -- (9,4.5) -- (4.5,0) -- (0,0);
	\foreach \x in {1,...,4}{
		\foreach \y in {0,...,4}
      			\draw[blue] (\x+5,\y) circle (3pt); 
	}
   \end{tikzpicture}
   \captionwidth=14cm % changes the width of the caption
   \caption{The number of $d$-prime points in the parallelograms $T(K)$ and $U(K)$ and in the square $K$ is the same.}
   \label{fig:Tdprime}
   \end{center} %\vspace{-0.3cm}
\end{figure}
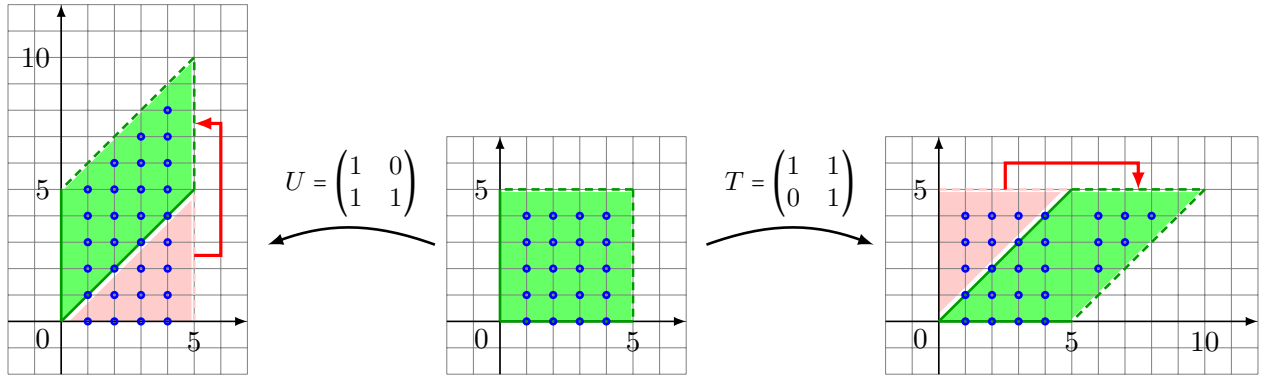

Recall that the group $\slzz$ is generated by the pair of elementами $T$ and $U$, \ie the matrix  $A$ is a word in the alphabet $\set{T,U}$:
$$A = T^{\ep_{1}} U^{\ep_{2}}\ldots T^{\ep_{m-1}}U^{\ep_{m}},\qquad \textrm{where }\; \ep_{i}\in \lsem -1, 1\rsem.$$
Hence, the set of integer points in the semi-open parallelogram $A(K)$ consists of integer points in the square $K$, that are translated by vectors with coordinates divisible by $d$. We conclude that in the parallelogram $A(K)$ and in the square $K$, there is the same number of $d$-prime points, namely $d\cdot\phi(d)$. This completes the proof of the equality $f(d,d) = d\cdot\phi(d)$, from where follows the required equality $f(k,l) = k \ell\,\frac{\phi(k\land \ell)}{k\land \ell}$. 
\end{proof}

%%%%%%%%%%%%%%%%%%%%%%%%%%%%%%%%%%%%%%%%%%%%%%%%%

\vs
\begin{theo}\label{th:A}  
The number of pairs of permutations from $S_{n}$ with a $3$-cycle commutator and which generate $A_{n}$ or $S_{n}$\,, is equal to
\begin{equation}\label{eq:A:formula}
   \#\A(n) = \frac{3}{8} (n - 2)J_{2}(n) \, n!
\end{equation}
for any natural $n>2$.
\end{theo}
\begin{proof} 
We are going to use Jordan's theorem (Proposition \ref{prop:jordan}), Lemma \ref{lemm:twotypes}, Lemma \ref{lemm:onecylinderprim}, Proposition \ref{prop:A1}, Lemma \ref{lemm:twocylinderprim}, Lemma \ref{lemm:twocylinderprim1}, the M\"{o}bius inversion formula (Proposition \ref{prop:mobiusformula}), Dirichlet convolutions, Proposition \ref{prop:fgh} and Ramanujan's formula (Proposition \ref{prop:ramanujan}).

Consider the pair of permutations $(s,t)\in\snsn$ generating the symmetric or the alternating group of degree $n$ with condition that the commutator $[s,t]$ is a $3$-cycle:
\begin{equation}\label{eq:stzyxA}
  [s,t] = (z\;y\;x), \quad \textrm{\ie}\quad t s t\inv = (x\;y\;z) s.
\end{equation}
Such a permutation $t\in\sn$ and such a triple $(x,y,z)\in\lsem 1,n\rsem^{3}$ will be called \deff{allowed for $s$}. We shall always assume that $z = \max\{x,y,z\}$. Since the group $G=\group{s,t}$ contains a $3$-cycle, then by Jordan's theorem this group coincides with $\sn$ or $\an$ if and only if it is primitive, \ie \emph{the square-tiled surface $O(s,t)$ is primitive}.

According to Lemma \ref{lemm:twotypes}, one of the following two situations occurs:
\begin{itemize}
\item[(a)] All three integers $x,y,z$ lie in same cycle $s$ and the square-tiled surface $O(s,t)$ is one-cylinder with some parameters $(k,a,b,c)$, as shown in Figure \ref{fig:onecylinder}. By Lemma \ref{lemm:onecylinderprim}, the surface $O(s,t)$ is primitive  if and only if $k=1$ and $a\land b\land c = 1$. In particular, then the permutation $s$ is a cycle of maximal length, and so we find the number of required pairs $(s,t)$ via Proposition \ref{prop:A1}:
\begin{equation}\label{eq:FnA}
   F(n) \;:=\; \frac{n}{6}\bigg( J_{2}(n) - 3 J_{1}(n) \bigg)\, n!\,.
\end{equation}

\item[(b)] Two integers $x$ and $y$ lie in the same cycle of the permutationи $s$, and $z$ lies in another cycle. Moreover, the length of the latter cycle is equal to the $s$-distance from $y$ to $x$. In this case, the square-tiled surface $O(s,t)$ is two-cylinder with parameters $(a,b,k,\ell,\al,\be)$, as shown in Figure \ref{fig:twocilinderA}. According to Lemma \ref{lemm:twocylinderprim}, the surface $O(s,t)$ is primitive  if and only if 
$$a\land b = 1 \quad\textrm{ and }\quad k\land \ell\land (a\be - b\al) = 1.$$
Such a permutation $t\in\sn$ and such a triple $(x,y,z)\in\lsem 1,n\rsem^{3}$ will be called \deff{allowed for $s$ of second kind}.
\begin{figure}[htbp]
   \begin{center}
   \begin{tikzpicture}[scale=.5, >=latex,shorten >=1pt,shorten <=1pt,line width=1pt,bend angle=20, 
   		circ/.style={circle,inner sep=2pt,draw=black!40,rounded corners, text centered},
   		tblue/.style={dashed,blue,rounded corners}]
        
      	\draw[step=1] (0,0) grid +(11,3);
      	\draw[step=1] (0,-8) grid +(5,4);
	\draw (0,0)+(.5,.4) node {$y$}; 
	\draw (5,0)+(.5,.5) node {$x$}; 
	\draw (0,-8)+(.5,.5) node {$z$}; 
	
	\path[<->,thin,>=arcs]  (0,-10.7) edge node[below] {$k$} ++ (5,0)
						(0,5.7) edge node[above] {$\ell$} ++ (11,0)
						(-1.7,-8) edge node[left] {$a$} ++ (0,4)
						(-1.7,0) edge node[left] {$b$} ++ (0,3);
	\foreach \x in {0,10}
		\draw[thin] (\x,1) -- ++(0,1);

      	\draw[line width=2pt,blue] (5,0) -- ++(0,3);

	\path[<->,thin,>=arcs,blue]  (0,3.7) edge node[above] {$\be$} ++ (4,0)
						(4,3.7) edge node[above] {$\ell-\be$} ++ (7,0)
						(0,-8.7) edge node[below] {$\ell-\be$} ++ (7,0)
						(7,-8.7) edge node[below] {$\be$} ++ (4,0);

	\path[<->,thin,>=arcs,blue]  (0,-3.5) edge node[above] {$\al$} ++ (2,0)
						(2,-3.5) edge node[above] {$k-\al$} ++ (3,0)
						(0,-.5) edge node[below] {$k-\al$} ++ (3,0)
						(3,-.5) edge node[yshift=-.08cm,below] {$\al$} ++ (2,0);
	\foreach \x in {0,4,11}
		\draw[thin,blue] (\x,3) -- ++(0,1);
	\foreach \x in {0,7,11}
		\draw[thin,blue] (\x,-8) -- ++(0,-1);
	
	\foreach \x in {0,2,5}
		\draw[thin,blue] (\x,-4) -- ++(0,.8);
	\foreach \x in {0,3,5}
		\draw[thin,blue] (\x,0) -- ++(0,-.8);
   \end{tikzpicture}
   \caption{A two-cylinder square-tiled surface with parameters $(a,b,k,\ell,\al,\be)$.}
   \label{fig:twocilinderA}
   \end{center} \vspace{-0.3cm}
\end{figure}
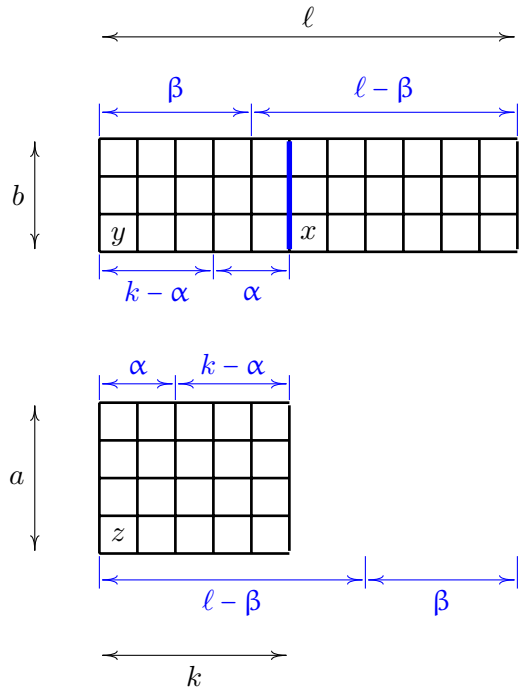
Let us find the number of pairs $(s,t)\in\snsn$\,, where $s$ decomposes into a disjoint product of $a$ cycles of length $k$ and $b$ cycles of length $\ell$, and the permutation $t$ is allowed for $s$ of second kind:
\setlength{\jot}{8pt} % to increase the vertical space between equations
\begin{eqnarray*}%\label{} 
G(n) &:=& \hspace{-.2cm} \sum_{\substack{a,b,k,\ell\in\N \\ k<\ell \\ a k + b \ell = n\\ a\land b=1}}\hspace{-.3cm} \#\sett{(s,t)\in\snsn}{ \type{s}\,={k^{a}\ell^{b}}, \;\; t \textrm{ is allowed for } s \textrm{ of second kind}}\\
&=& \hspace{-.2cm} \sum_{\substack{a,b,k,\ell\in\N \\ k<\ell \\ a k + b \ell = n\\ a\land b=1}}\hspace{-.3cm} \#\sett{s\in\sn}{ \type{s}=k^{a}\ell^{b} }
\;\times\; \#\sett{t\in\sn}{ t \textrm{ is allowed for } s_{0} \textrm{ of second kind}},
\end{eqnarray*}
where $s_{0}$ is a fixed permutation of cycle type $k^{a}\ell^{b}$. If a triple $(x,y,z)$ is also fixed, then the number of allowed $t$ for $s_{0}$ of second kind with condition \eqref{eq:stzyxA} is equal to
$$(a-1)!\, (b-1)!\, k^{a} \ell^{b}\; \dfrac{\phi(d)}{d}, \quad\textrm{ where } d=k\land \ell\,.$$
Indeed, the square-tiled surface $O(s_{0},t)$ in Figure \ref{fig:twocilinderA} consists of $a$ rows of length $k$ and $b$ rows of length $\ell$, which correspond to the cycles of $s_{0}$. The permutation $t$ indicates how these rows are glued: 
\begin{itemize}
   \item[$\bullet$] the row with the number $z$ must by at the bottom, and $z$ is in its left corner;
   \item[$\bullet$] the remaining $a-1$ rows of length $k$ are glued from above in an arbitrary order, whilst a cyclic permutation of the numbers is permitted in any row -- this makes $(a-1)!\, k^{a-1}$ variants;
   \item[$\bullet$] on top of an obtained rectangle the row with the numbers $y$ and $x$ is glued as shown in Figure, and a twist of length $\al$ is applied;
   \item[$\bullet$] the remaining $b-1$ rows of length $\ell$ are glued from above in an arbitrary order, whilst a cyclic permutation of the numbers is permitted in any row -- this makes $(b-1)!\, \ell^{b-1}$ variants;
   \item[$\bullet$] finally, the top of the construction is glued to the bottom via a twist of length $\be$.
\end{itemize}
The obtained surface must be primitive. Then by Lemma \ref{lemm:twocylinderprim1}, there are $k \ell\; \frac{\phi(d)}{d}$ choices for the twists $\al$ and $\be$, where  $d=k\land \ell$. Hence, the permutation $t$ can be chosen in $(a-1)!\, (b-1)!\, k^{a} \ell^{b}\; \frac{\phi(d)}{d}$ ways. So,
\begin{eqnarray*}%\label{} 
G(n) &=& \hspace{-.2cm} \sum_{\substack{a,b,k,\ell\in\N \\ k<\ell \\ a k + b \ell = n\\ a\land b=1}}\hspace{-.3cm} \#\sett{s\in\sn}{ \type{s}= k^{a}\ell^{b} }
\;\times\; (a-1)!\, (b-1)!\, k^{a} \ell^{b}\; \dfrac{\phi(k\land l)}{k\land l} \\
& &\qquad\qquad\times\; \#\sett{ (x,y,z)\in \lsem 1,n\rsem^{3} }{\textrm{the triple } (x,y,z) \textrm{ is allowed for } s_{0} \textrm{ of second kind}} \\
\end{eqnarray*}
It is easy to check that the group $\sn$ has 
$$\frac{n!}{a!\,b!\,k^{a} \ell^{b}}$$
permutations of cycle type $k^{a}\ell^{b}$. Finally, the number of allowed triples $(x,y,z)$ of second kind for a fixed permutation $s_{0}$ of cycle type $k^{a}\ell^{b}$ is equal to $$a b k \ell\,.$$ Indeed, the integer $z$ belongs to one of the $a$ cycles of length $k$ (there are $a\cdot k$ variants) and the integer $y$ belongs to one of the $b$ cycles of length $\ell$ (there are $b\cdot \ell$ variants). Therefore, we obtain the relation
\begin{eqnarray*}%\label{} 
G(n) &=& \hspace{-.2cm} \sum_{\substack{a,b,k,\ell\in\N \\ k<\ell \\ a k + b \ell = n\\ a\land b=1}} 
\frac{n!}{a!\,b!\,k^{a} \ell^{b}} 
\;\times\; (a-1)!\, (b-1)!\, k^{a} \ell^{b}\; \dfrac{\phi(k\land l)}{k\land l} 
\;\times\; a b k \ell \\
&=& n! \hspace{-.2cm} \sum_{\substack{a,b,k,\ell\in\N \\ k<\ell \\ a k + b \ell = n\\ a\land b=1}}
\dfrac{\phi(k\land l)}{k\land l} \;\times\; k \ell\,.
\end{eqnarray*}
Denote\, $g(n) = \drob{G(n)}{n!}$\, and consider the following function
$$\widetilde{g}(n) := \sum_{\substack{a,b,k,\ell\in\N \\ k<\ell \\ a k + b \ell = n}}
\dfrac{\phi(k\land l)}{k\land l} \;\times\; k \ell\,,$$
where integers $a, b$ are not necessarily coprime. Denote $c = a\land b$ and notice that the equality $a k + b \ell = n$ is equivalent to the equality $a' k + b' \ell = \drob{n}{c}$\,, where the integers $a'=\drob{a}{c}$ and $b'=\drob{b}{c}$ are coprime. Thus,
$$\widetilde{g}(n) = \sum_{c \mid n} g(n/c) = \sum_{c \mid n} g(c)\,,$$
and by the M\"{o}bius inversion formula (Proposition \ref{prop:mobiusformula}), we get
\begin{equation}\label{eq:convg}
   g(n) = \sum_{c \mid n}  \mu(c) \widetilde{g}(n/c),\quad\textrm{ \ie}\quad g = \mu\ast \widetilde{g}\,.
\end{equation}

Further, the greatest common divisor  $d=k\land l$ also divides the integer $n= a k + b \ell$, from where
$$\widetilde{g}(n) \;= \sum_{\substack{a,b,k,\ell\in\N \\ k<\ell \\ a k + b \ell = n}}
\dfrac{\phi(k\land l)}{k\land l} \; k \ell
\;= \sum_{d\mid n} \dfrac{\phi(d)}{d} \Bigg(\sum_{\substack{a,b,k,\ell\in\N \\ k<\ell \\ a k + b \ell = n \\ k\land l = d}} k \ell \Bigg)\,.$$
In the last sum, let us replace the pair $(k, \ell)$ by the pair $(d\cdot k' , d\cdot \ell')$, where the integers $k'$ and $\ell'$ are coprime %(которые мы переdenote by $k$ and $\ell$)
$$
\widetilde{g}(n) \; 
= \sum_{d\mid n} \dfrac{\phi(d)}{d} \Bigg(\sum_{\substack{a,b,k,\ell\in\N \\ k<\ell \\ a k + b \ell = n \\ k\land l = d}} k \ell \Bigg)\;
= \sum_{d\mid n} \dfrac{\phi(d)}{d} \Bigg( \sum_{\substack{a,b,k',\ell'\in\N \\ k'<\ell' \\ a k' + b \ell' = n \\ k'\land l' = 1}} d^{2} k' \ell' \Bigg)\;
= \sum_{d\mid n} d \phi(d) \Bigg(\sum_{\substack{a,b,k',\ell'\in\N \\ k'<\ell' \\ a k' + b \ell' = \drob{n}{d} \\ k'\land l' = 1}} k' \ell' \Bigg)\,,
$$
or in terms of the Dirichlet convolution:
\begin{equation}\label{eq:convgtilde}
   \widetilde{g} = (\id\cdot \phi) \ast h,\quad\textrm{ where}\quad h(n) \;= \sum_{\substack{a,b,k',\ell'\in\N \\ k'<\ell' \\ a k' + b \ell' = n \\ k'\land l' = 1}} k' \ell'\,.
\end{equation}
Consider also the function
$$
\widetilde{h}(n) \;= \sum_{\substack{a,b,k,\ell\in\N \\ k<\ell \\ a k + b \ell = n }} k \ell \,.
$$
Since the equality $a k + b \ell = n$ is equivalent to the equality $a k' + b \ell' = \drob{n}{d}$ with $k\ell = d^{2}k'\ell'$, then
$$\widetilde{h}(n) = \sum_{d \mid n} d^{2} h(n/d),\quad\textrm{ \ie}\quad \widetilde{h} = \id_{2}\ast h\,.$$
Taking into account Proposition \ref{prop:fgh}, we have
$$
\widetilde{h} = \id_{2}\cdot \big(\one \ast \frac{h}{\id_{2}}\big), \quad\textrm{ \ie}\quad \frac{\widetilde{h}}{\id_{2}} = \one \ast \frac{h}{\id_{2}}\,,
$$
and so by the M\"{o}bius inversion formula, we get
\begin{equation}\label{eq:convh}
   \frac{h}{\id_{2}} = \mu \ast \frac{\widetilde{h}}{\id_{2}},\quad\textrm{ from where}\quad 
   h = (\id_{2}\cdot\mu) \ast \widetilde{h}\,.
\end{equation}

We conclude that the relations \eqref{eq:convg}, \eqref{eq:convgtilde} and \eqref{eq:convh} give
\begin{equation*}%\label{eq:}
   g \;=\; \mu\ast \widetilde{g} \;=\; \mu \ast (\id\cdot \phi)\ast h \;=\; \mu\ast (\id\cdot\phi) \ast (\id_{2}\cdot\mu) \ast \widetilde{h}\,.
\end{equation*}
By Proposition \ref{prop:fgh} and the equality \eqref{eq:idmuphii}, one has 
$$(\id\cdot\phi) \ast (\id_{2}\cdot\mu)  = \id\cdot \big(\phi\ast (\id\cdot\mu)\big)\qquad \textrm{and}\qquad  \phi\ast (\id\cdot\mu) = \mu\,,$$
from where
\begin{equation}\label{eq:convgg}
   g \;=\; \mu\ast (\id\cdot\mu)\ast \widetilde{h}\,.
\end{equation}
We already know from the proof of Theorem \ref{th:B} that
\begin{equation}\label{eq:htilde}
   \widetilde{h} = \frac{5}{24}\si_{3} + \frac{1}{2}\si_{2} + \frac{1}{24}\si_{1} - \frac{3}{4}\id\cdot \si_{1}\,,
\end{equation}
due to the relation \eqref{eq:sumklB}. The equalities $\mu\ast \si_{i} = \id_{i}$ and $\mu\ast \id_{i} = J_{i}$ (see \eqref{eq:musik} and \eqref{eq:idkmu} in Appendix) imply that
\begin{equation*}
   (\id\cdot\mu)\ast \mu\ast \si_{i} \;=\;  (\id\cdot\mu)\ast \id_{i} \;=\; \id\cdot (\mu\ast \id_{i-1}) \;=\; 
   \begin{cases}
   \id\cdot (\mu\ast\one) = \id\cdot\ep = \ep, & \textrm{if } i=1;\\
   \id\cdot J_{i-1}, & \textrm{if } i > 1,
   \end{cases}
\end{equation*}
where, by definition, $\ep(n) = 0$ for $n>1$. Further,
\begin{equation*}
   \mu\ast (\id\cdot\mu)\ast  (\id\cdot \si_{1}) \;=\; \mu\ast \id\cdot(\mu\ast\si_{1}) \;=\; \mu\ast \id\cdot\id \;=\; \mu\ast\id_{2} = J_{2}\,.
\end{equation*}
Put the formula \eqref{eq:htilde} for $\widetilde{h}$ into the relation \eqref{eq:convgg}:
\begin{equation*}
   g = \frac{5}{24}\id\cdot J_{2} + \frac{1}{2}\id\cdot J_{1} + \frac{1}{24}\ep - \frac{3}{4} J_{2}\,,
\end{equation*}
\ie 
\begin{equation}\label{eq:GnA}
   G(n) \;=\; g(n)\, n! \;=\; 
   \Bigg(\frac{5}{24}n J_{2}(n) + \frac{1}{2}n J_{1}(n) - \frac{3}{4} J_{2}(n)\Bigg)\, n!\,,
\end{equation}
 for any natural $n>2$.
\end{itemize}
By summing the formulas \eqref{eq:FnA} and \eqref{eq:GnA}, we obtain
\begin{equation*}
   \#\A(n) \;=\; F(n) + G(n) 
   =\; \Bigg( \frac{n}{6} J_{2}(n) - \frac{n}{2} J_{1}(n)  + \frac{5n}{24} J_{2}(n) + \frac{n}{2} J_{1}(n) - \frac{3}{4} J_{2}(n)\Bigg)\, n! \;
   =\; \frac{3}{8} (n - 2)J_{2}(n) \, n!\,, 
\end{equation*}
as required.
\end{proof}

\vss
\begin{corr}\label{corr:A:bounds} 
For any $\eps > 0$, there exists a positive integer $N=N(\eps)$ such that 
$$n^{3 - \eps}\cdot n! \;<\; \#\A(n) \;<\; \frac{3}{8} n^{3}\cdot n!$$
for all natural $n \ge N$.
\end{corr}
\begin{proof}%\label{} 
From Theorem \ref{th:A} and the definition of Jordan's totient function follows that
\begin{equation*}%\label{eq:}
   \frac{8}{3}\cdot \frac{\#\A(n)}{n!} \;=\; (n - 2)J_{2}(n) \;<\; n\cdot n^{2} \;=\; n^{3}
\end{equation*}
for all natural $n$.
Let us now prove the lower bound for the cardinality of the set $\A(n)$. For any $\delta>0$, there exists a number $K=K(\delta)$ such that 
\begin{equation}\label{eq:corr:A:bounds:J2}
	n^{2-\delta} < J_{2}(n)\qquad \textrm{for } n\ge K.
\end{equation}
Indeed, the function 
$$f(n) = \frac{n^{2-\delta}}{J_{2}(n)}$$
is multiplicative. For a power of a prime $p^{\al}$, its value
$$f(p^{\al}) \;=\; \frac{(p^{\al})^{2-\delta}}{ (p^{\al})^{2}  \left(1 - \drob{1}{p^{2}}\right)  }
	\;=\; \frac{1}{p^{\al\delta}}\cdot \frac{p^{2}}{p^{2}-1}$$
tends to zero as $p^{\al}\to +\infty$. According to Proposition \ref{prop:appA:epsilon}, we have $f(n)\to 0$ as $n\to +\infty$. In particular, there exists a number $K$ such that
$$f(n) < 1 \quad \textrm{for all } n\ge K.$$
We conclude that if $n\ge K$ and $n\ge 4$, then
\begin{equation*}%\label{eq:}
    \frac{\#\A(n)}{n!} \;=\; \frac{3}{8}(n - 2)J_{2}(n) 
   \;>\; \frac{3}{8}\cdot \frac{n}{2} \cdot n^{2-\delta} \;=\; \frac{3}{16} n^{3-\delta}.
\end{equation*}
For an arbitrary $\eps > 0$, take $\delta = \eps/2$. Then 
\setlength{\jot}{8pt} % to increase the vertical space between equations
\begin{eqnarray*}%\label{} 
\frac{\#\A(n)}{n!} &>& \frac{3}{16}n^{3- \delta} \;=\; \frac{3}{16}n^{3 - \eps + \eps/2}  
			\;=\; \frac{3 n^{\eps/2}}{16}\cdot n^{3-\eps} \\
	&>& n^{3-\eps}  \qquad \textrm{for }\; n\ge N,
\end{eqnarray*}
where $N=N(\eps)$ is a positive integer such that $N\ge K$, $N \ge 4$ \,and\, $N^{\eps/2} > \drob{16}{3}$. This completes the proof.
\end{proof}

\begin{corr}\label{corr:A:proba} 
The probability that a pair of permutations from $\sn$ with a $3$-cycle commutator generates $\an$ or $\sn$\,, tends to zero as $n\to+\infty$.
\end{corr}
\begin{proof}
According to Corollary \ref{corr:A:bounds}, the upper bound\; $\#\A(n) \;<\; \frac{3}{8} n^{3}\cdot n!$\; holds for any $n$. On the other hand, due to Corollary \ref{corr:B:bounds} with $\eps=2$, there exists an $N$ such that 
$$\#\B(n) > \psi_{0}(n)\cdot n! = n\, P(n)\cdot n!$$ 
for all $n\ge N$. Therefore,
\begin{equation*}%\label{eq:}
	0 \;<\; \frac{\#\A(n)}{\#\B(n)} \;<\; \frac{\frac{3}{8} n^{2}}{P(n)} \;\;\underset{n\to+\infty}{\longto}\;\; 0,
	\qquad\textrm{from where }\quad \frac{\#\A(n)}{\#\B(n)} \;\;\underset{n\to+\infty}{\longto}\;\; 0,
\end{equation*}
which proves the required statement.
\end{proof}

\newpage
%\addtocontents{toc}{\protect\nopagebreak\hspace{-1.2cm} \textcolor{BlueViolet}{\textbf{Приложениe}}\hspace{0.7cm}}
%%%%%%%%%%%%%%%%%%%%%%%%%%%%%%%%%%%%%%%%%%%%%%
\setcounter{equation}{0}
\renewcommand{\theequation}{\thesection.\arabic{equation}}
\setcounter{table}{0}
\renewcommand{\thetable}{\thesection.\arabic{table}}
\newshadetheorem{theoA}{Theorem}
\newshadetheorem{propA}{Proposition}
\newshadetheorem{lemmA}{Lemma}
\newshadetheorem{corrA}{Corollary}

\renewcommand{\theshadetheoA}{\thesection.\arabic{theoA}}
\renewcommand{\theshadepropA}{\thesection.\arabic{propA}}
\renewcommand{\theshadelemmA}{\thesection.\arabic{lemmA}}
\renewcommand{\theshadecorrA}{\thesection.\arabic{corrA}}
%@ \titleformat{ command }[ shape ]{ format }{ label }{ sep-label }{ before }[ after ]:
\titleformat{\section}{\Large\bfseries\scshape}{Appendix\; \thesection}{0.5cm}{}
\renewcommand{\contentsname}{Appendix}
\appendix

\section{Background in number theory and permutation groups}
%+++++++++++++++++++++++++++++++++++++++++++++++++++++++++++++++++++
\subsection{Arithmetic functions}\label{sec:arithmetic}
\noindent The majority of facts about arithmetic functions, which are given in this section, can be found in the classical textbook by Apostol \cite{apostol}.
\begin{defi} 
Any function $f: \N\to \CC$ defined on the set of positive integers and taking its values in the set of complex numbers is called \deff{arithmetic}.  
\end{defi}
\noindent Denote by $\F$ the set of all arithmetic functions, and by $\F^{\ast}$ the subset functions different from zero at the point $1$:
\begin{equation}\label{eq:F1}
   \F^{\ast} = \sett{f\in\F}{f(1)\neq 0}.
\end{equation}
For any two arithmetic functions $f$ and $g$ one defines their \deff{product} $f\cdot g$: 
\begin{equation}\label{eq:product} 
   (f\cdot g)(n) = f(n) g(n),
\end{equation}
their \deff{discrete convolution} $f\dc g$:
\begin{equation}\label{eq:discrete}
   (f\dc g)(1) = 0\quad \textrm{and}\quad (f\dc g)(n) = \sum_{k=1}^{n-1} f(k) g(n-k)\quad \textrm{for } n>1,
\end{equation}
and also the \deff{Dirichlet convolution} $f\ast g$:
\begin{equation}\label{eq:dirichlet}
   (f\ast g)(n) = \sum_{d\mid n} f(d) g(n/d).
\end{equation}
The latter operation provides the structure of an abelian group on the set $\F^{\ast}$, since it possesses the following properties:
\setlength{\leftmargini}{5.2cm} % default 2.5em
\setlength{\labelsep}{0.5cm} % default 0.5em
\begin{itemize}
\item[(\emph{identity element})] $f\ast \ep = \ep\ast f = f$\quad for any function $f\in\F$, where 
\begin{equation}\label{eq:delta}
\ep(n)=
\begin{cases} 
1, & \textrm{if } n=1\\
0, & \textrm{if } n>1 
\end{cases}    
\end{equation}

\item[(\emph{inverse element})] for any $f\in\F^{\ast}$ there exists a unique function $\widetilde{f}\in\F^{\ast}$ such that \; $f\ast \widetilde{f} = \widetilde{f}\ast f=\ep$, \; such a function satisfies the recurrent relation
\begin{equation}\label{eq:dirichletinverse}
   \widetilde{f}(1)=\frac{1}{f(1)}\;\; \textrm{ and }\;\; \widetilde{f}(n) = \frac{-1}{f(1)} \sum_{d\mid n,\, d<n} \widetilde{f}(d) f(n/d) \;\; \textrm{ for } n>1;
\end{equation}

\item[(\emph{associativity})] $f\ast (g\ast h) = (f\ast g)\ast h$\quad for any $f, g, h\in\F$;

\item[(\emph{commutativity})] $f\ast g = g\ast f$\quad for any $f, g\in\F$.
\end{itemize}

\vs
\begin{defi} 
An arithmetic function $f$ is \deff{multiplicative}, if $f(1)=1$ and the equality
\begin{equation}\label{eq:multiplicative}
   f(m n) = f(m) f(n)\quad \textrm{holds for any coprime } m, n\in\N. 
%\begin{array}{rcl}
%  f(1) & =  & 1, \\
%  f(m n) & =  & f(m) f(n)\quad \textrm{for any coprime } m, n\in\N. 
%\end{array}
\end{equation}
\end{defi}
\noindent By $\Fm$ we denote the set of all multiplicative arithmetic functions. It is a subgroup of the group $\F^{\ast}$, since
\setlength{\leftmargini}{5.2cm} % default 2.5em
\begin{itemize}
  \item[$\begin{pmatrix}\emph{multiplicativity}\\ \emph{of the convolution}\end{pmatrix}$] the Dirichlet convolution of two multiplicative functions is multiplicative; 
  \item[$\begin{pmatrix}\emph{multiplicativity}\\ \emph{of the inverse element}\end{pmatrix}$] the function $\widetilde{f}$ is multiplicative for any function $f\in\Fm$.
\end{itemize}

\vs It is easy to show that  the following arithmetic functions are multiplicative:
\begin{itemize}
   \item[$\one(n)$] \deff{constant function}, $\one(n)=1$

   \item[$\id(n)$] \deff{identical function}, $\id(n)=n$

   \item[$\id_{k}(n)$] \deff{power function of order} $k$, 
   \begin{equation}\label{eq:idk}
   \id_{k}(n)=n^{k},
   \end{equation}
   in particular, $\id_{0}=\one$ and $\id_{1}=\id$

   \item[$\mu(n)$] \deff{M\"{o}bius function} defined as 
   \begin{equation}\label{eq:mu}
   \mu(n)=
   \begin{cases}
   1, & \textrm{if } n=1\\
   (-1)^{r}, & \textrm{if } \al_{1}=\ldots=\al_{r}=1\\
   0, & \textrm{в ином случае} 
   \end{cases}
   \end{equation}
   where $n=p_{1}^{\al_{1}}\cdots\, p_{r}^{\al_{r}}$ is the prime factorization of $n$

   \item[$\tau(n)$] \deff{number of divisors } of $n$, 
   \begin{equation}\label{eq:tau}
   \tau(n)=\prod_{i=1}^{r} (\al_{i}+1),
   \end{equation}
   where $n=p_{1}^{\al_{1}}\cdots\, p_{r}^{\al_{r}}$ is the prime factorization of $n$

   \item[$\si(n)$] \deff{sum of divisors } of $n$, 
   \begin{equation}\label{eq:si}
   \si(n)=\prod_{i=1}^{r} \frac{p_{i}^{\al_{i}+1}-1}{p_{i}-1},
   \end{equation}
   where $n=p_{1}^{\al_{1}}\cdots\, p_{r}^{\al_{r}}$ is the prime factorization of $n$

   \item[$\si_{k}(n)$] \deff{sigma-function of order $k$} is the sum of $k^\textrm{th}$ powers of the divisors of $n$, 
   \begin{equation}\label{eq:sik}
   \si_{k}(n)= \sum_{d\mid n} d^{k} =\prod_{i=1}^{r} \frac{p_{i}^{k(\al_{i}+1)}-1}{p_{i}^{k}-1},
   \end{equation}
   in particular, $\si_{0}=\tau$ and $\si_{1}=\si$

   \item[$\phi(n)$] \deff{Euler's totient function} is the number of positive integers from $1$ to $n$ which are coprime with $n$,
   \begin{equation}\label{eq:phi}
   \phi(n) = n\mhs \prod_{\substack{p | n\\ p\;\textrm{prime}}} \mhs \left(1-\frac{1}{p}\right)
   \end{equation}

   \item[$J_{k}(n)$] \deff{Jordan's totient function of order $k>0$} is the number of tuples $(a_{1},\ldots,a_{k})$ of $k$ positive integers from $1$ to $n$, the greatest common divisor of which is coprime with $n$,
   \begin{equation}\label{eq:Jk}
   J_{k}(n) = n^{k}\mhs \prod_{\substack{p | n\\ p\;\textrm{prime}}} \mhs\left(1-\frac{1}{p^{k}}\right),
   \end{equation}
   in particular, $J_{1}=\phi$.
\end{itemize}
For a proof of the formulas \eqref{eq:tau} -- \eqref{eq:Jk}, it is sufficient to check them for the case that $n$ is a power of a prime, and to use the multiplicativity property. 

\begin{propA}[\textbf{M\"{o}bius inversion formula}]\label{prop:mobiusformula} 
  Let $f$ and $g$ be arbitrary arithmetic functions. The equality 
  $$
  f(n) = \sum_{d\mid n} g(d)
  $$
  holds for any $n\in\N$ if and only if one has
  $$
  g(n) = \sum_{d\mid n} \mu(d) f(n/d)
  $$
  for any $n\in\N$. In other words, the equalities $f = g\ast \one$ and $g = f\ast \mu$ are equivalent.
%  Более того, if одна из functions $f$ and $g$ is multiplicative and выполняется одно из этих relationй, then and вторая function is multiplicative. 
\end{propA}

\begin{propA} 
  Let $f$ be a multiplicative arithmetic function. Then the equality
  \begin{equation}\label{eq:sumproduct}
   \sum_{d\mid n} \mu(d) f(d) \;= \prod_{\substack{p | n\\ p\;\textrm{\emph{prime}}}} \mhs(1 - f(p))
  \end{equation}
  holds for any $n\in\N$.
\end{propA}
\begin{proof} 
Consider the function $g(n)=\sum_{d\mid n} \mu(d) f(d)=(\mu\cdot f)\ast \one$. It is multiplicative as the Dirichlet convolution of two multiplicative functions $\mu\cdot f$ and $\one$. Further,
$$
g(p^{\al}) = \mu(1) f(1) + \mu(p) f(p) + \dots + \mu(p^{\al}) f(p^{\al}) = 1\cdot 1 + (- 1)\cdot f(p) = 1 - f(p)
$$
for each power of a prime $p$. Therefore, $$g(n)=g(p_{1}^{\al_{1}})\cdots g(p_{r}^{\al_{r}}) = (1 - f(p_{1}))\cdots (1 - f(p_{r}))$$ for any natural $n$ with prime factorization $p_{1}^{\al_{1}}\cdots p_{r}^{\al_{r}}$.
\end{proof}

From this proposition for $f(n)=1/n^{k}$ we obtain the equality
\begin{equation}\label{eq:mud}
   \sum_{d\mid n} \frac{\mu(d)}{d^{k}} \;= \prod_{\substack{p | n\\ p\;\textrm{prime}}} \mhs\left(1 - \frac{1}{p^{k}}\right).
\end{equation}

The following relations can be proved directly by definition of the Dirichlet convolution or by using the multiplicativity property:
\begin{eqnarray} 
\one\ast \one &=& \tau \\
\one\ast \id_{k} &=& \si_{k}\\ 
\one\ast \mu &=& \ep\\
\one\ast\phi &=& \id\\
\one\ast J_{k} &=& \id_{k}\\ 
\id_{k}\ast \id_{\ell} &=& \si_{k-\ell}\cdot \id_{\ell}\\
\id_{k}\ast \mu &=& J_{k} \label{eq:idkmu}\\
\mu\ast \si_{k} &=& \id_{k} \label{eq:musik}\\
\tau\ast \phi &=& \si\\
(\id\cdot \mu)\ast \si &=& \one \label{eq:idmusi}\\ 
(\id\cdot \mu)\ast \phi &=& \mu. \label{eq:idmuphii}
\end{eqnarray}

\noindent For instance, to show the equality \eqref{eq:idmusi} one considers the function $f = (\id\cdot \mu)\ast \si$, which is multiplicative as well as the functions $\id \cdot \mu$ and $\si$. If $p$ is a prime number, then
\begin{eqnarray*} 
f(p^{\al}) &=& \sum_{d\mid p^{\al}} d \mu(d) \si(p^{\al}/d) = \sum_{k=0}^{\al} p^{k} \mu(p^{k}) \si(p^{\al-k})  
= 1\cdot\mu(1)\si(p^{\al}) + p\cdot\mu(p)\si(p^{\al-1}) \\ 
&=& (1+p+\dots+p^{\al}) - p(1+p+\dots+p^{\al-1}) = 1,
\end{eqnarray*}
from where $f(n) = f(p_{1}^{\al_{1}})\cdots f(p_{r}^{\al_{r}}) = 1$ for any $n=p_{1}^{\al_{1}}\cdots p_{r}^{\al_{r}}$.

\vss
The following statement is given in the textbook of Hardy and Wright \cite[теорема 316]{hardy-wright79}.
\begin{propA}\label{prop:appA:epsilon} 
If an arithmetic function $f(n)$ is multiplicative and $f(p^{\al})\to 0$ as $p^{\al}\to +\infty$, where $p^{\al}$ is a power of a prime, then $f(n)\to 0$ as $n\to +\infty$.
\end{propA}

\vss
An arithmetic function $f$ is \deff{completely multiplicative}, if $f(m n) = f(m) f(n)$ for any positive integers $m$ and $n$ (not just coprime). 

\begin{propA}\label{prop:fgh} 
  If a function $f$ is completely multiplicative, then
  $$
  f\cdot (g\ast h) = (f\cdot g)\ast (f\cdot h)
  $$
  for any functions $g, h\in\F$.
\end{propA}
\begin{proof}
   Since $f$ is completely multiplicative, then $f(n) = f(d)f(n/d)$ for any positive integer $n$ and any of its divisor $d$. Therefore,
   $$
   f\cdot (g\ast h)(n) = f(n) \sum_{d\mid n} g(d) h(n/d) = \sum_{d\mid n} f(d)g(d)\, f(n/d) h(n/d) = (f\cdot g)\ast (f\cdot h)(n),
   $$
   as required.
\end{proof}

The function $\id$ is multiplicative, so the equality \eqref{eq:idmusi} can be established, by Proposition \ref{prop:fgh} and the M\"{o}bius inversion formula:
$$
(\id\cdot \mu)\ast \si = \one \quad\iff\quad 
\id\cdot \left(\mu\ast \frac{\si}{\id}\right) =\one \quad\iff\quad
\frac{\si}{\id}\ast \mu= \frac{\one}{\id} \quad\iff\quad
\frac{\one}{\id} \ast\one  = \frac{\si}{\id} \quad\iff\quad
\one\ast \id  = \si.
$$
The last equality is obviously true.

\vs
In 1916, Srinivasa Ramanujan expressed discrete convolutions of sigma-functions of odd order via linear combinations of functions $\si_{k}$ and $\id\cdot \si_{k}$ with rational coefficients. For instance, he proved the proposition bellow (\cf collected papers \cite{ramanujan}).
\begin{propA}[\textbf{Ramanujan's formulas}, 1916]\label{prop:ramanujan} 
  The following equalities
%  \begin{eqnarray*} 
  $$(\si \dc \si)(n) = \frac{5}{12}\si_{3}(n) + \frac{1}{12}\si(n) - \frac{1}{2}n \si(n),$$
  $$(\si \dc \si_{3})(n) = \frac{7}{80}\si_{5}(n) + \frac{1}{24}\si_{3}(n) - \frac{1}{240}\si(n) - \frac{1}{8}n \si_{3}(n)$$
%  \end{eqnarray*}
  hold for any natural $n$.
\end{propA}

%+++++++++++++++++++++++++++++++++++++++++++++++++++++++++++++++++++
\subsection{Partitions of a positive integer}\label{sec:partitions}
\noindent A \deff{partition} of a number $n\in \N$ is its presentation as a sum
\begin{equation}\label{eq:na}
n = a_{1} + \dots + a_{r}
\end{equation}
of positive integers $a_{1},\ldots,a_{r}\in\N$. Two sums that differ only in the order of their summands are considered to be the same partition.
\begin{defi} 
The function $P(n)$ equal to the number of distinct partitions of $n$ is called the \deff{partition function}.
\end{defi}

\noindent The values of $P(n)$ for the first 18 positive integers are given in Table \ref{table:partinion}.

\begin{table}[htdp]
\caption{$P(n)$ for $n\in\lsem 1, 18\rsem$.}
\mvs
\begin{center}
\begin{tabular}{|c|c|c|c|c|c|c|c|c|c|c|c|c|c|c|c|c|c|c|c|c|} 
\hline
$n$ & 1&2&3&4&5&6&7&8&9&10&11&12&13&14&15&16&17&18\\ \hline\hline
$P(n)$ & 1&2&3&5&7&11&15&22&30&42&56&77&101&135&176&231&297&385\\
\hline
\end{tabular}
\end{center}
\label{table:partinion}
\end{table}%

The generating function of the sequence $\{P(n)\}_{n=0}^{\infty}$\,, where $P(0)=1$, is the following infinite product:
\begin{equation}\label{eq:genpartition}
   F(x) = \prod_{k=1}^{\infty} \frac{1}{1 - x^{k}}.
\end{equation}
Indeed, the equality \eqref{eq:na} can be re-written as $n=\al_{1} k_{1} + \dots+\al_{s} k_{s}$\,, meaning that among $a_{1}, \ldots, a_{r}$ there are exactly $\al_{1}$ numbers $k_{1}$, $\al_{2}$ numbers $k_{2}$, etc., exactly $\al_{s}$ numbers $k_{s}$ with condition $k_{1} < \ldots < k_{s}$. Therefore,
\begin{eqnarray*}
F(x) &=& \prod_{k=1}^{\infty} \frac{1}{1 - x^{k}} = \prod_{k=1}^{\infty} (1 + x^{k} + x^{2k} + \dots + x^{\al k}+ \dots) 
= 1 +\mhs\mhs \sum_{ \substack{s\in \N \\ \al_{1}, \ldots, \al_{s}\in\N \\ k_{1} < \ldots < k_{s}\in\N} }\mhs x^{\al_{1} k_{1} + \dots+\al_{s} k_{s}}\\
&=& \sum_{n=0}^{\infty} P(n) x^{n}.
\end{eqnarray*}

In their famous paper \cite[1918]{hardy-ramanudjan} the mathematicians Hardy and   Ramanujan  obtained the asymptotic estimate of the partition function:
\begin{equation}\label{eq:pn}
   P(n)\sim \frac{1}{4n\sqrt{3}} \exp\left(\pi\sqrt{\frac{2n}{3}}\right) \quad\textrm{as } n\to\infty.
\end{equation}

\begin{propA}\label{prop:sidcP} 
The discrete convolution of the functions $\si$ and $P$ satisfies the equality
$$
(\si\dc P)(n) = n P(n) - \si(n)
$$
for any positive integer $n$.
\end{propA}
\begin{proof} 
Take the natural logarithm of both sides of the equality \eqref{eq:genpartition}
$$
   \log{(F(x))} = \sum_{k=1}^{\infty} \log{\left(\frac{1}{1 - x^{k}}\right)}
$$
and differentiate the new equality:
\begin{equation}\label{eq:xfx}
  \frac{F'(x)}{F(x)} = \sum_{k=1}^{\infty} \frac{k x^{k-1}}{(1-x^{k})^{2}}\,,\quad \textrm{from where}\quad 
  x F'(x) = F(x) \sum_{k=1}^{\infty} \frac{k x^{k}}{1-x^{k}}.   
\end{equation}
An infinite formal series of the form\; $\sum\limits_{k=1}^{\infty} a_{k}\, \dfrac{x^{k}}{1-x^{k}}$\; is called the \deff{Lambert series} with coefficients $a_{1}, a_{2}, \dots$ (\cf the classical textbook by Knopp \cite[\S 58]{knopp}). As well known, the following relations hold for sigma-functions:
\begin{equation}\label{eq:lambert}
   \sum_{k=1}^{\infty} k^{m}\,\frac{ x^{k}}{1-x^{k}} = \sum_{n=1}^{\infty} \si_{m}(n) x^{n}
\end{equation}
for any nonnegative integer $m$. In particular, when $m=1$ we get
$$
   \sum_{k=1}^{\infty} \frac{k x^{k}}{1-x^{k}} = \sum_{n=1}^{\infty} \si(n) x^{n}.
$$
Because\; $F(x) = \sum_{n=0}^{\infty} P(n) x^{n}$, then\; $x F'(x) = \sum_{n=1}^{\infty} n P(n) x^{n}$ and we can write the equality \eqref{eq:xfx} as
\begin{align*}
\sum_{n=1}^{\infty} n P(n) x^{n} &= \left(\sum_{n=0}^{\infty} P(n) x^{n} \right) \left(\sum_{n=1}^{\infty} \si(n) x^{n} \right) = \sum_{n=1}^{\infty} \left(\sum_{k=0}^{n-1} P(k) \si(n-k) \right) x^{n} \\
&= \sum_{n=1}^{\infty} \big(\si(n) + (\si\dc P)(n)\big) x^{n}.
\end{align*}
Comparing the coefficients of $x^{n}$ leads to the relation $(\si\dc P)(n) = n P(n) - \si(n)$.
\end{proof}

%+++++++++++++++++++++++++++++++++++++++++++++++++++++++++++++++++++
\subsection{Permutation groups}\label{sec:permutations}
\noindent Let $M$ and $L$ be some sets (finite or infinite). The elements of a set will  be also called \deff{points}.

\setlength{\leftmargini}{2.5em} % default 2.5em
\setlength{\labelsep}{0.5em} % default 0.5em
\begin{description}
\item[Injection] from the set $M$ to the set $L$ is a map $f : M\into L$ with the property: $f(x) \neq f(y)$ for any $x\neq y$ from $M$. In other words, an injection sends distinct elements of $M$ to \emph{distinct} elements of \nolinebreak $L$.

\item[Surjection] from the set $M$ to the set $L$ is a map $f : M\onto L$ with the propertyм: for any $z\in L$ there exists at least one $x \in M$ such that $z=f(x)$. In other words, under surjection the image of the set $M$ is the whole set $L$.

\item[Bijection] from the set $M$ to the set $L$ is a map $f : M\to L$, which is injective and surjective at the same time. If a map $f$ is a bijection, then the \deff{inverse map} $f\inv: L\to M$ is defined, namely
%$$\forall z\in L\quad \exists !\, x\in M \;\mid\; z=f(x)\qquad \textrm{мы полагаем}\qquad f\inv(z) = x.$$
$$\textrm{for any } z\in L \textrm{ we define}\qquad f\inv(z) = x,\quad \textrm{ where } x\in M \textrm{ such that } z=f(x).$$
The existence of such an element $x\in M$ follows from the surjectivity of $f$, and its uniqueness  follows from the injectivity $f$.

\item[Permutation] of the set $M$ is any bijection of the set $M$ with itself. 

For an explicit definition of a permutation $s: M\to M$, it is presented as a matrix $2\times |M|$,  where the first row consists of all elements of the set $M$ in some order, and under each element there is its image for the action of $s$ in the second row:
$$s = \begin{pmatrix} x_{1} & \ldots & x_{n} \\ s(x_{1}) & \ldots & s(x_{n}) \end{pmatrix}.$$
For instance, the record $s=\begin{pmatrix} 1 & 2 & 3 \\ 3 & 1 & 2 \end{pmatrix}$ means that $M=\set{1,2,3}$ and $s: 1\mapsto 3$, $2\mapsto 1$, $3\mapsto 2$.

\item[Product] of two permutations $s: M\to M$ and $t:M\to M$, denoted by $s\circ t$, is defined to be the composition of the maps:
$$(s\circ t)(x) = s(t(x)) \quad \textrm{for any elementа } x\in M.$$
The symbol of composition is often omitted, and one writes shortly $st$.

The product of permutations has group features:
\end{description}
\setlength{\leftmargini}{5.2cm} % default 2.5em
\setlength{\labelsep}{0.5cm} % default 0.5em
\begin{itemize}
\item[(\emph{identity element})] $s\circ \id = \id\circ s = s$\quad for any permutationи $s: M\to M$, where $\id$ is \deff{identity permutation} defined as
\begin{equation*}
   \id(x) = x\quad \textrm{for all } x\in M;
\end{equation*}

\item[(\emph{inverse element})] $s\circ s\inv = s\inv\circ s = \id$\quad for any permutation $s: M\to M$, where $s\inv$ is the inverse map (\cf above):
$$\textrm{if }\;\; s = \begin{pmatrix} x_{1} & \ldots & x_{n} \\ y_{1} & \ldots & y_{n} \end{pmatrix}, \textrm{ then }\;\; s\inv = \begin{pmatrix} y_{1} & \ldots & y_{n} \\ x_{1} & \ldots & x_{n} \end{pmatrix};$$

\item[(\emph{associativity})] $s\circ (t\circ u) = (s\circ t)\circ u$\quad for any three permutations $s, t, u: M\to M$.
\end{itemize}

\setlength{\leftmargini}{2.5em} % default 2.5em
\setlength{\labelsep}{0.5em} % default 0.5em
\begin{description}
\item[Symmetric group] of the set $M$ is the group of \emph{all} permutations of $M$. It is denoted by $Sym(M)$. 

If $M=\set{1,2,\ldots,n}$, then one uses the shortened designation  $S_{n}$\,, its order of this group is $n!$.

\item[Permutation group] on the set $M$ is an arbitrary subgroup of the symmetric group $Sym(M)$.

\item[Degree] of a permutation group on $M$ is defined to be the cardinality of the set $M$. \\ For instance, the degree of the group $S_{n}$ equals $n$.

\item[Support] of a permutation $s\in Sym(M)$ is the set of those elements of $M$, which move under the action of $s$. The support is denoted by
$$\supp\; s := \sett{x\in M}{s(x)\neq x}.$$
The set of all points, which are stabilized, is denoted by
\begin{align*}
\fix\; s &:= \sett{x\in M}{s(x) = x} \\
&\,= M\setminus \supp\; s.
\end{align*}

\item[Graph] of permutations $s_{1},\ldots,s_{g}$ of the set $M$ is the following $g$-colored directed graph (possibly with loops): 

$\bullet$\; its vertices are the elements of the set $M$, 

$\bullet$\; two vertices $x$ and $y$ of the graph are connected by a directed edge with the $i^{\textrm{th}}$ color and the starting vertex at $x$ if and only if\; $s_{i}(x) = y$.

The graph of permutations $s_{1},\ldots,s_{g}$ will be denoted by $\graph{s_{1},\ldots,s_{g}}$. In Figure \ref{fig:graph}, one can see the graph of the permutations $s=\begin{pmatrix} 1&2&3&4&5&6&7 \\ 2&3&4&1&6&5&7 \end{pmatrix}$\; and\; $t=\begin{pmatrix} 1&2&3&4&5&6&7 \\ 1&5&3&2&7&4&6 \end{pmatrix}$.
\begin{figure}[htbp]
   \begin{center}
   \begin{tikzpicture}[scale=.8, >=latex,shorten >=0pt,line width=1pt,bend angle=20,
        circ/.style={circle,inner sep=2pt,draw=black!40,rounded corners, text centered}]
      
      \node[circ] (1) at (0,0) {1};
      \node[circ] (2) at (2,2) {2};
      \node[circ] (3) at (4,0) {3};
      \node[circ] (4) at (2,-2) {4};
      \node[circ] (5) at (6,2) {5};
      \node[circ] (6) at (6,-2) {6};
      \node[circ] (7) at (8,0) {7};
      
      \path[->,green!50!black] (1) edge (2)
      				(2) edge (3)
				(3) edge (4)
				(4) edge (1)
				(5) edge[bend left] (6)
				(6) edge[bend left] (5)
				(7) edge[loop right] ();
      \path[->,blue,dashed] (4) edge (2)
      				(2) edge (5)
				(5) edge (7)
				(7) edge (6)
				(6) edge (4)
				(1) edge[loop left] ()
				(3) edge[loop right] ();
       \begin{scope}[scale=0.4,xshift=25cm,yshift=3cm]
    	\draw[->,green!60!black] (0,2) -- (3,2);
    	\draw[->,dashed,blue] (0,0) -- (3,0);
	\draw (3,2) node[right] {$= s$};
	\draw (3,0) node[right] {$= t$};
       \end{scope}
   \end{tikzpicture}
   \caption{$\graph{s,t}$}
   \label{fig:graph}
   \end{center} \vspace{-0.3cm}
\end{figure}
%If permutationи $s_{1},\ldots,s_{g}$ generate group $G\subseteq Sym(M)$, then for графа этих permutations используют also обозначение $\graph{G}$. 

\item[Cycle ($k$-cycle)] on the set $M$ is a permutation $s$ that rearrange some $k$ elements $x_{1},\dots,x_{k}$ from $M$ by the rule
$$s(x_{1})=x_{2},\quad s(x_{2})=x_{3},\quad\ldots,\quad s(x_{k})=x_{1},$$
and the other elements don't move. In this case, one writes $s=(x_{1}\; \ldots\; x_{k})$. The number $k$ is called the \deff{length} of the cycle.
\end{description}
\begin{lemmA}\label{prop:cycles} 
Each permutation $s$ of $M$ is a product of disjoint cycles.
\end{lemmA}
\begin{proof} 
It suffices to consider the graph $\graph{s}$ of the given permutation, and notice that distinct cycles of this graph correspond to disjoint cycles of the permutation $s$. For instance, from Figure \ref{fig:graph} it follows that $s=(1\;2\;3\;4)(5\;6)$ and $t=(2\;5\;7\;6\;4)$.
\end{proof}
When presenting a permutation as a product of disjoint cycles, the cycles of length one are normally omitted (they correspond to the points which are stabilized by the permutation). 

\begin{description}
\item[Cycle type] of a permutation $s\in\sn$ is the formal expression $1^{l_{1}} 2^{l_{2}} \ldots k^{l_{k}}$ meaning that in the decomposition of $s$ into disjoint cycles, there are exactly $l_{1}$ cycles of length $1$, exactly $l_{2}$ cycles of length $2$, and so on, exactly $l_{k}$ cycles of length $k$. In particular, the following equality holds
$$l_{1}+ 2 l_{2} + \dots+k l_{k} = n.$$
The cycle type of a permutation $s$ will be denoted by $\type{s}$. For instance, if $s=(2\; 3)(4\;5\;6\;7\;8)(10\;11)\in S_{15}$\,, then\; $\type{s} = 1^{6} 2^{2} 5^{1}$.

\emph{Two permutations has the same cycle type if and only if they are conjugate}. This follows from the fact that when conjugating a cycle $(a_{1}\;a_{2}\;\dots\;a_{k})$ of an arbitrary permutation $s$, one replaces the points $a_{i}$ in the cycle by their images for the action of $s$\,:
$$s\, (a_{1}\;a_{2}\;\dots\;a_{k})\, s\inv = (s(a_{1})\;s(a_{2})\;\dots\;s(a_{k})).$$
Thus, any two cycles $(a_{1}\;a_{2}\;\dots\;a_{k})$ and $(b_{1}\;b_{2}\;\dots\;b_{k})$ of the same length are conjugate:
$$(b_{1}\;b_{2}\;\dots\;b_{k}) = \begin{pmatrix} a_{1} & a_{2} & \dots & a_{k} \\ b_{1} & b_{2} & \dots & b_{k}\end{pmatrix} (a_{1}\;a_{2}\;\dots\;a_{k}) \begin{pmatrix} a_{1} & a_{2} & \dots & a_{k} \\ b_{1} & b_{2} & \dots & b_{k}\end{pmatrix}\inv.$$

All permutations from $\sn$ with the same cycle type form a \textbf{conjugacy class} of the group $\sn$. 
\end{description}

\begin{lemmA}\label{lemm:conjugacynumber} 
The number of elementов in the conjugacy class $|C|$ of the group $\sn$ corresponding to a cycle type $1^{l_{1}} 2^{l_{2}} \ldots k^{l_{k}}$ can be calculated via the formula
\begin{equation}\label{eq:conjugacynumber}
	|C| = \frac{n!}{1^{l_{1}} l_{1}!\; 2^{l_{2}} l_{2}!\; \cdots\; k^{l_{k}}l_{k}!}\,.
\end{equation}
\end{lemmA}

\begin{description}
\item[Flag] of a permutation $s\in \sn$ is a nondecreasing sequence $a_{1} \le \ldots \le a_{r}$ of positive integers, which are the lengths of the disjoint cycles in the decomposition of $s$ (including the cycles of length $1$). The next equality holds
$$a_{1} + \dots + a_{r} = n\,,$$
\ie one gets a partition of $n$.
The flag of the permutation $s$ is denoted by $\flag{s}$. For instance, if $s=(2\; 3)(4\;5\;6\;7\;8)(10\;11)\in S_{15}$\,, then\; $\flag{s} = 1,1,1,1,1,1,2,2,5$.

As well as for the cycle types, \emph{two permutations have the same flag if and only if they are conjugate}. We conclude with the following statement:
\end{description}
\begin{lemmA}\label{lemm:numberclasses} 
The number of conjugacy classes in $\sn$ is equal to $P(n)$.
\end{lemmA}

\begin{description}
\item[Transposition] is a cycle of length two. According to the relation \eqref{eq:conjugacynumber}, in the group $\sn$ there are exactly $\frac{n!}{1^{n-2}(n-2)!\; 2^{1}1!} = \frac{n(n-1)}{2}$ transpositions.
\end{description}
\begin{lemmA} 
Each permutation in $\sn$ is a product of transposition $($possibly intersecting$)$.
\end{lemmA}
\begin{proof} 
Due to Lemma \ref{prop:cycles}, it suffices to prove the statement for an arbitrary cycle. But for cycles it is true, since  $(x_{1}\; \ldots\; x_{k}) = (x_{1}\; x_{2})(x_{2}\; x_{3})\dots (x_{k-1}\;x_{k})$ when $k> 1$.
\end{proof}

\begin{description}
\item[Even] permutation is a permutation which is a product \emph{even} number of transpositions. The other permutations are called \textbf{odd}. For instance, all cycles of odd length are even permutations, and cycles of even length are odd.

\item[Signature] of a permutation is defined to be $1$ or $-1$ depending on whether the permutation is even or odd respectively. The signature of a permutations $s\in Sym(M)$ is denoted by $\sign{s}$:
\begin{equation*}%\label{eq:}
	\sign{s} = 
	\begin{cases} 
	1, & \textrm{if } s \textrm{ is even}; \\
	-1, & \textrm{if } s \textrm{ is odd}.
	\end{cases}
\end{equation*}
For instance, $\sign{\id} = \sign{1\;2\;3} = 1$ \,and\, $\sign{1\;2}=-1$. For any permutations $s$ and $t$ the following relation holds:
$$\sign{s\circ t} = \sign{s}\cdot \sign{t}.$$

\item[Alternating group] of the set $M$ is the subgroup ofthose permutations from $Sym(M)$, which are products of \emph{even} number of transpositions (\ie the subgroup of all even permutations). It is denoted by $Alt(M)$, or shortly $A_{n}$ for $M=\set{1,2,\dots,n}$. The order of $A_{n}$ equals $\drob{n!}{2}$ when $n>1$.

\item[Transitive] permutation group on the sets $M$ is a subgroup $G\subseteq Sym(M)$ such that, for any points $x, y\in M$, there exists at least one permutation $s\in G$ sending $x$ to \nolinebreak $y$. 

It easy to show that the group generated by permutationми $s_{1},\ldots,s_{g}$ is transitive if and only if the graph $\graph{s_{1},\ldots,s_{g}}$ is connected.

\item[Block] for a permutation group $G\subseteq Sym(M)$ is a \emph{nonempty} subset $\De\subseteq M$ such that 
$$s(\De) = \De\quad \textrm{or}\quad s(\De)\cap \De=\emptyset\qquad \textrm{for any permutation } s\in G.$$
Evidently, the subsets $\set{x}$  consisting of one element (\emph{singletons}) and the whole set $M$ are blocks for $G$. They are called \deff{trivial}.

For example, for the Klein four-group $K_{4}=\set{\id, (1\;2)(3\;4), (1\;3)(2\;4), (1\;4)(2\;3)}\subset S_{4}$ there are exactly $11$ blocks:
$$\set{1},\; \set{2},\; \set{3},\; \set{4},\; \set{1,2,3,4},\; \set{1,2},\; \set{1,3},\; \set{1,4},\; \set{2,3},\; \set{2,4},\; \set{3,4},$$
the first five of which are trivial.

Remark that if $\De$ is a block for a group $G$, then its image $s(\De)$ for the action of any permutation $s\in G$ is also a block for $G$.
\end{description}

\begin{defi} 
A permutation group on $M$ is said to be \deff{primitive} if \emph{all} blocks for it are trivial.
\end{defi}

Let us show that for $|M|>2$ \emph{any primitive group $G\subseteq Sym(M)$ is automatically transitive}. Indeed, consider the orbit $\De$ of an arbitrary point $x\in M$ for the action of $G$, \ie
$$\De = \sett{s(x)}{s\in G}.$$ %\sett{y\in M}{\exists s\in G, s(x)=y}.$$
Since $s(\De) = \De$ for any permutation $s\in G$, then $\De$ is a block, and so it is either a singleton or the whole set $M$. In the first case, we obtain $s(x)=x$ for all $s\in G$ and $x\in M$, \ie  the group $G$ consists of the only permutation $\id$. So, it cannot be primitive when $|M|>2$. In the second case, the group $G$ is transitive.  

The converse is false: a transitive group is not always primitive. Remark, however, that \emph{for a finite set $M$ and a transitive group $G\subseteq Sym(M)$, the cardinality of any block $\De$ divides the cardinality of all set $M$}. Indeed, any two images $s_{1}(\De)$ and $s_{2}(\De)$ of the block for the action of permutations $s_{1}, s_{2}\in G$ either coincide or  don't intersect, by definition of a block. Besides, all images together cover the whole set $M$, since the group $G$ is transitive. Hence, we get a partition of $M$,
$$
M = s_{1}(\De) \sqcup \ldots \sqcup s_{k}(\De)
$$
into several disjoint subsets, the cardinalities of which are equal to the cardinality of the block $\De$. Thus $|\De|$ divides $|M|$. Moreover, each of these subset is also a block for $G$, from where follows the lemma below.

\begin{lemmA}\label{lem:primitive} 
Consider a transitive permutation group $G\subseteq Sym(M)$ and some point $x\in M$. The group $G$ is primitive if and only if the only blocks containing the point $x$ are $\set{x}$ and $M$.
\end{lemmA}

It is easy to check that for $|M|>2$ the symmetric and the alternating groups of the set $M$ are primitive. In 1873, the French mathematician Jordan \cite{jordan73} found a simple criterion whether a given \emph{primitive} group coincides with $Alt(M)$ or $Sym(M)$.
\begin{propA}[\textbf{Jordan's theorem}, 1873]\label{prop:jordan}
Let $G$ be a primitive subgroup of $S_n$.
\begin{enumerate}
   \item If $G$ contains a transposition\index{Transposition}, then $G=S_n$.
   \item If $G$ contains a 3-cycle then, either $G=\sn$ or $\an$.
   \item In general, if $G$ contains a cycle of prime order $p\leqslant n-3$, then either $G=A_n$ or $G=S_n$.
\end{enumerate}
\end{propA}
A proof of this theorem can be found in the textbook of Wielandt \cite[Theorem 13.9]{wielandt}.

\vs
By means of Jordan's theorem, it is not difficult to obtain the following statement (see for example the paper by  Isaacs and Zieschang \cite{isaacs-z}):
\begin{propA}\label{prop:isaacs} 
Let $m$ and $n$ be positive integers, $1<m<n$. Two cycles $(1\;2\;\dots\;m)$ and $(1\;2\;\dots\;n)$ generate the group $A_{n}$ if $m$ and $n$ are both odd, and the whole group $S_{n}$ otherwise. 
\end{propA}

%+++++++++++++++++++++++++++++++++++++++++++++++++++++++++++++++++++
\newpage
\subsection{Representations of the symmetric group}\label{sec:representations}
% Let $F$ обозначает the set действительных numbers $\R$or the set complex numbers $\CC$.

\begin{description}
\item[Vector space] over $\CC$ is an additive group $V$ together with a rule of multiplication $\la\cdot \vv{v}$ of elements $\vv{v}\in V$ by numbers $\la\in \CC$. Moreover, the following axioms must be satisfied for any $\vv{u},\vv{v}\in V$ \,and\, $\la,\mu\in \CC$\,:
\renewcommand{\labelenumi}{\textbf{\arabic{enumi})}} % 
\setlength{\labelsep}{1.25em} % default .5em
\begin{enumerate}
\item $1\cdot \vv{v} \;=\; \vv{v}$,
\item $\la\cdot (\vv{u} + \vv{v}) \;=\; \la\cdot\vv{u} + \la\cdot \vv{v}$,
\item $(\la+\mu)\cdot \vv{v} \;=\; \la\cdot\vv{v} + \mu\cdot\vv{v}$,
\item $(\la\mu)\cdot \vv{v} \;=\; \la\cdot (\mu\cdot \vv{v})$.
\end{enumerate}
\setlength{\labelsep}{.7em} % default .5em
Elements of the space $V$ are called \textbf{vectors}, and the numbers from $\CC$ are called \textbf{scalars}.

An example of a vector space over $\CC$ is the set $\CC^{d}$ of  $d$-tuples of complex numbers, where $d\in\N$. Addition vectorов and multiplication by scalars are defined as
\begin{eqnarray*}%\label{} 
(x_{1},x_{2},\ldots,x_{d}) + (y_{1},y_{2},\ldots,y_{d}) &=& (x_{1}+y_{1},x_{2}+y_{2},\ldots,x_{d}+y_{d})\,,\\
\la\cdot (x_{1},x_{2},\ldots,x_{d}) &=& (\la x_{1},\la x_{2},\ldots,\la x_{d})\,.
\end{eqnarray*}

\item[Subspace] of a vector space $V$ is any subset $U\subseteq V$ which is itself a vector space with rules of addition of vectorов and multiplication by scalars, inherited from $V$. 

For a subset $U\subseteq V$ to be a  subspace, it is necessary and sufficient that the following properties hold:
\renewcommand{\labelenumi}{\textbf{\arabic{enumi})}} % 
\setlength{\labelsep}{1.25em} % default .5em
\begin{enumerate}
\item $\vv{0}\in U$,
\item if $\vv{u}$ and $\vv{v}$ lie in $U$, then their sum $\vv{u}+\vv{v}$ also lies in $U$, 
\item if $\vv{v}\in U$, then also $\la\cdot \vv{v}\in U$ for any scalar $\la\in\CC$.
\end{enumerate}
\setlength{\labelsep}{.7em} % default .5em
The set $\set{\vv{0}}$ and the entire $V$ are obviously subspaces in $V$, they are called the \textbf{trivial subspaces}. All other subspaces are called \textbf{proper}.

An example of a proper subspace in $\CC^{d}$ for $d>1$ is the set of $d$-tuples of complex numbers $(x_{1},x_{2},\ldots,x_{d})$ with the condition $x_{1}=0$. 

\item[Matrix] of size $d\times d$\; is a square table having $d$ rows and $d$ columns and filled with numbers:
\begin{equation*}%\label{eq:}
A = 
\begin{pmatrix} 
a_{11} & a_{12} & \cdots & a_{1d}\\
a_{21} & a_{22} &  & a_{2d} \\
\vdots &  & \ddots & \vdots \\
a_{d1} & a_{d2} & \cdots & a_{dd}\\
\end{pmatrix}	
\end{equation*}
One writes shortly $A=(a_{ij})_{d\times d}$ or $A=(a_{ij})$,  where $a_{ij}$ is the element of the matrix at the intersection of the $i^{\textrm{th}}$ row and the $j^{\textrm{th}}$ column. A matrix is said to be \textbf{complex} if all its elements are complex (\ie of the form $a+b\sqrt{-1}$ for some real numbers $a$ and $b$).

The complex $d\times d$  matrices act on the vectors from the space $\CC^{d}$ in the following way:
\begin{equation*}%\label{eq:}
	A\cdot \vv{v} \;=\; 
\begin{pmatrix} 
a_{11} & a_{12} & \cdots & a_{1d}\\
a_{21} & a_{22} &  & a_{2d} \\
\vdots &  & \ddots & \vdots \\
a_{d1} & a_{d2} & \cdots & a_{dd}\\
\end{pmatrix}	
\cdot 
\begin{pmatrix} x_{1} \\ x_{2} \\ \vdots \\ x_{d} \end{pmatrix}
\;=\;
\begin{pmatrix} 
a_{11} x_{1} + a_{12} x_{2} + \cdots + a_{1d} x_{d} \\
a_{21} x_{1} + a_{22} x_{2} + \cdots + a_{2d} x_{d} \\
\vdots \\
a_{d1} x_{1} + a_{d2} x_{2} + \cdots + a_{dd} x_{d} 
\end{pmatrix}
\,,
\end{equation*}
where a vector $\vv{v}\in \CC^{d}$ is written as a  column with $d$ complex numbers. For instance,
\begin{equation*}%\label{eq:}
\begin{pmatrix} 
a&b \\ c&d 
\end{pmatrix}	
\cdot
\begin{pmatrix} x \\ y \end{pmatrix}
=
\begin{pmatrix} a x + b y \\ c x + d y \end{pmatrix}\,.
\end{equation*}
It is easy to check that for any vectors $\vv{u},\vv{v}\in \CC^{d}$ and scalars $\la,\mu\in \CC$  the following equality holds:
\begin{equation*}%\label{eq:}
	A\cdot (\la \vv{u} + \mu \vv{v}) = \la\cdot (A\cdot \vv{u}) + \mu\cdot (A\cdot \vv{v}).
\end{equation*}

\item[Identity matrix] is a matrix with ones on the main diagonal (from the upper left corner to the lower right corner) and zeros everywhere else. The identity $d\times d$ matrix is denoted by $I_{d}$\,:
\begin{equation*}%\label{eq:}
	I_{d} = 
\begin{pmatrix} 
1 & 0 & \cdots & 0\\
0 & 1 &  & 0 \\
\vdots &  & \ddots & \vdots \\
0 & 0 & \cdots & 1\\
\end{pmatrix}\,.	
\end{equation*}

\item[Trace] of a matrix is defined as the sum of the numbers on the main diagonal. The trace of a matrix $A$  is denoted by $\tr(A)$\,:
$$\tr(A) = a_{11} + a_{22} + \cdots + a_{dd}\,.$$
For instance, $\tr(I_{d}) = d$.

\item[Determinant] of a $d\times d$ matrix is given by the following formula:
\begin{equation}\label{eq:det}
	\det(A) = \sum_{s\in S_{d}} \sign{s}\cdot a_{1,s(1)}\, a_{2,s(2)}\,\ldots\, a_{d,s(d)}\,, 
\end{equation}
where one sums all products of $d$ elements of the matrix, no two of which are in the same row or the same column. In a product $a_{1,s(1)} a_{2,s(2)}\,\cdots\, a_{d,s(d)}$ the first multiplier lies in the column $s(1)$, the second multiplier lies in the column $s(2)$ and so on, where $s$ is some permutation of the numbers from $1$ to $d$. In the sum, such a product is counted with a coefficient $+1$ or $-1$ depending on the parity of $s$ (recall that $\sign{s}$ denotes the signature of the permutation).

For instance, for the $2\times 2$ matrices one has
\begin{equation*}%\label{eq:}
\tr 
\begin{pmatrix} a & b \\ c & d \end{pmatrix} = a+d
\qquad\textrm{and}\qquad
\det
\begin{pmatrix} a & b \\ c & d \end{pmatrix} = a d - b c\,.
\end{equation*}

\item[Product of matrices] $A=(a_{ij})_{d\times d}$ and $B=(a_{ij})_{d\times d}$ is the $d\times d$ matrix with the element $\sum\limits_{k=1}^{d} a_{ik} b_{kj}$ at the intersection of the $i^{\textrm{th}}$ row and the $j^{\textrm{th}}$ column:
\begin{equation*}%\label{eq:}
	A\cdot B = 
\begin{pmatrix} 
a_{11} b_{11} + a_{12} b_{21} +\cdots a_{1d} b_{d1} & a_{11} b_{12} + a_{12} b_{22} +\cdots a_{1d} b_{d2} & \cdots & a_{11} b_{1d} + a_{12} b_{2d} +\cdots a_{1d} b_{dd}\\
a_{21} b_{11} + a_{22} b_{21} +\cdots a_{2d} b_{d1}  & a_{21} b_{12} + a_{22} b_{22} +\cdots a_{2d} b_{d2} &  & a_{21} b_{1d} + a_{22} b_{2d} +\cdots a_{2d} b_{dd} \\
\vdots &  & \ddots & \vdots \\
a_{d1} b_{11} + a_{d2} b_{21} +\cdots a_{dd} b_{d1}  & a_{d1} b_{12} + a_{d2} b_{22} +\cdots a_{dd} b_{d2} & \cdots & a_{d1} b_{1d} + a_{d2} b_{2d} +\cdots a_{dd} b_{dd}\\
\end{pmatrix}\,.	
\end{equation*}
In some sense, when multiplying two matrices ``the rows of the first matrix are multiplied by the  columns of the second matrix''. The product of two matrices is not always commutative. For instance,
\begin{equation*}%\label{eq:}
\begin{pmatrix} 1&1 \\ 0&1 \end{pmatrix}
\begin{pmatrix} 1&0 \\ 1&1 \end{pmatrix}=
\begin{pmatrix} 2&1 \\ 1&1 \end{pmatrix}\,,
\qquad\textrm{but}\qquad
\begin{pmatrix} 1&0 \\ 1&1 \end{pmatrix}
\begin{pmatrix} 1&1 \\ 0&1 \end{pmatrix}=
\begin{pmatrix} 1&1 \\ 1&2 \end{pmatrix}\,.	
\end{equation*}
The product of matrices has the following properties:
\renewcommand{\labelenumi}{\textbf{\alph{enumi}.}} % 
\begin{enumerate}
   \item For any $d\times d$ matrix $A$ one has the equality $$A\cdot I_{d} = I_{d}\cdot A = A$$ 
   
   \item If $\det(A)\neq 0$, then there exists a (unique) matrix denoted by $A^{-1}$ such that $$A\cdot A^{-1} = A^{-1}\cdot A = I_{d}$$ Such a matrix is called the \textbf{inverse of $A$}.
   
   \item For any three $d\times d$ matrices $A$, $B$ and $C$, one has the equality
   $$A\cdot (B\cdot C) = (A\cdot B)\cdot C$$

   \item The determinant of a product equals a product of the determinants,
   $$\det(A\cdot B) = \det(A)\cdot \det(B)$$
\end{enumerate}

In view of these properties, all complex $d\times d$ matrices with \emph{nonzero} determinant form a group, which is called the \textbf{general linear group} and denoted by $\gl_{d}(\CC)$.
$$\gl_{d}(\CC) := \sett{ A=(a_{ij})_{d\times d} }{ a_{ij}\in\CC\;\textrm{ and }\; \det(A)\neq 0 }.$$

\item[Homomorphism] from a group $G$ to a group $K$ is a map $f: G\to K$ such that 
$$f(g_{1}g_{2}) = f(g_{1})\cdot f(g_{2})\qquad\textrm{for any } g_{1}, g_{2}\in G.$$
In other words, under homomorphisms the image of a product (of two elements from $G$) is a product of their images (in $K$). 

If a homomorphism $f$ is moreover a bijection, then it is called an \textbf{isomorphism}. Isomorphism is a bijective homomorphism.

\item[Representation] of a group $\sn$ is any homomorphism $\rho: \sn\to \gl_{d}(\CC)$, where $d\in\N$. 

The natural number $d$ is called the \textbf{dimension} of the representationя $\rho$ and  denoted  $\dim \rho$.

Given a representation $\rho: \sn\to \gl_{d}(\CC)$, to each permutation $s\in \sn$ corresponds a $d\times d$ complex matrix $\rho(s)$, so that the matrix of a product of two permutations $\rho(st)$ is equal to the product of matrices $\rho(s)\cdot \rho(t)$ for any $s,t\in \sn$.

Two representations $\rho: \sn\to \gl_{d}(\CC)$ and $\eta: \sn\to \gl_{k}(\CC)$ are said to be \textbf{equivalent} and one writes $\rho\sim \eta$, if $d=k$ and there exists a matrix $A\in\gl_{d}(\CC)$ such that 
\begin{equation*}%\label{eq:}
	\rho(s) = A\cdot \eta(s)\cdot A\inv \qquad\textrm{for any permutation } s\in \sn\,.
\end{equation*}

\item[Irreducible representation] of the group $\sn$ is such representation $\rho: \sn\to \gl_{d}(\CC)$\,, for which there is \emph{no} proper subspace $V$ of the vector space $\CC^{d}$ with the property:
 %\set{\vv{0}}\neq V\neq \CC^{d}\qquad\textrm{и}\qquad
$$\rho(s)\cdot \vv{v} \;\in\; V \qquad\textrm{for any permutation } s\in \sn \textrm{ and any vector } \vv{v}\in V .$$
In other words, only the subspace $\set{\vv{0}}$ and the entire space $\CC^{d}$ are invariant under the action of all matrices $\rho(s)$.

\end{description}

\begin{lemmA}\label{lemm:numberrepr} 
The number of pairwise nonequivalent irreducible representations of the group $\sn$ is equal to the number of conjugacy classes in $\sn$\,, \ie to the number of partitions of $n$.
\end{lemmA}

\begin{description}
\item[Character] of a representation $\rho$ of $\sn$ is the function $\upchi_{\rho} : \sn\to \CC$ such that
\begin{equation*}%\label{eq:}
	\upchi_{\rho}(s) = \tr(\rho(s))\qquad\textrm{for any permutation } s\in \sn.
\end{equation*}

This means that a character $\upchi_{\rho}$ is given by a tuple of \,$n!$\, complex numbers (possibly equal), which are the traces of the matrices $\rho(s)$ for $s\in\sn$. Here are some properties of the characters:

\renewcommand{\labelenumi}{\textbf{\alph{enumi}.}} % 
\begin{enumerate}
   \item For any representationя $\rho: \sn\to \gl_{d}(\CC)$, one has 
   $$\upchi_{\rho}(\id) = \dim \rho\,.$$ 
   Indeed, the homomorphism $\rho$ sends the identity permutation $\id$ to the identity matrix $I_{d}$\,, \ie $\upchi_{\rho}(\id) = \tr(\rho(\id)) = \tr(I_{d}) = d = \dim \rho$.
   
   \item If permutations $s$ and $t$ are conjugate, then
   $$\upchi_{\rho}(s) = \upchi_{\rho}(t).$$
   This means that any character $\upchi_{\rho}$ of the group $\sn$ can take at most $P(n)$ distinct values, because at elements of each conjugacy class the same value (\cf Lemma \ref{lemm:numberclasses}).
   
   \item If representations $\rho$ and $\eta$ of the group $\sn$ are equivalent, then their characters are equal:
   $$\upchi_{\rho}(s) = \upchi_{\eta}(s)\qquad \textrm{for any permutationи } s\in\sn\,.$$

   \item For any representation $\rho: \sn\to \gl_{d}(\CC)$ and any permutations $s\in\sn$ the value of the character $\upchi_{\rho}(s)$ is \emph{integer}. This can be proved by means of  Galois theory. 
\end{enumerate}
According to Lemma \ref{lemm:numberrepr} the groups $S_{2}$, $S_{3}$ and $S_{4}$ have respectively $P(2) = 2$, $P(3) = 3$ and $P(4)=5$ pairwise nonequivalent irreducible representations. In the tables below, one finds the characters of those representations, where in the first row we indicated permutations from distinct conjugacy classes of $\sn$\,, in the second row -- the number of permutations in those classes, and in the other rows -- the values of the characters at those permutations.
\begin{table}[htdp]
\caption{The characters of the irreducible representations of $S_{2}$ and $S_{3}$.}
\begin{center}
\mvs
\begin{tikzpicture}
\node[below] at (0,0) {
$
\begin{array}{c|cc}
C & \id & (1\;2) \\ \hline
|C| & 1 & 1 \\ \hline\hline
\upchi_{1} & 1 & 1\\
\upchi_{2} & 1 & -1\phantom{-}
\end{array}
$};
	
\node[below] at (6,0) {
$
\begin{array}{c|ccc}
C & \id & (1\;2) & (1\;2\;3)\\ \hline
|C| & 1 & 3 & 2 \\ \hline\hline
\upchi_{1} & 1 & 1 & 1\\
\upchi_{2} & 1 & -1\phantom{-} & 1\\
\upchi_{3} & 2 & 0 & -1\phantom{-}
\end{array}	
$};
\end{tikzpicture}
\end{center}
\label{table:charcters2s3}
\end{table}%
\mvs\mvs

\begin{table}[htdp]
\caption{The characters of the irreducible representations of the group $S_{4}$.}
\begin{center}
\mvs
$
\begin{array}{c|ccccc}
C & \id & (1\;2) & (1\;2\;3) & (1\;2)(3\;4) & (1\;2\;3\;4)\\ \hline
|C| & 1 & 6 & 8 & 3 & 6 \\ \hline\hline
\upchi_{1} & 1 & 1 & 1 & 1 & 1 \\
\upchi_{2} & 1 & -1\phantom{-} & 1 & 1 & -1\phantom{-}\\
\upchi_{3} & 2 & 0 & -1\phantom{-} & 2 & 0 \\
\upchi_{4} & 3 & 1 & 0 & -1\phantom{-} & -1\phantom{-}\\
\upchi_{5} & 3 & -1\phantom{-} & 0 & -1\phantom{-} & 1
\end{array}	
$
\end{center}
\label{table:charcters2s3}
\end{table}%

\end{description}

\begin{propA}[\textbf{Frobenius's formula}]\label{prop:frobenius} 
Consider an arbitrary permutation $w\in \sn$ and its conjugacy class $C$. The number of pairs of permutations $s$ and $t$ from $\sn$ such that $\type{[s,t]} = \type{w}$ is equal to
\begin{equation*}%\label{eq:}
	n!\cdot |C|\cdot \sum_{\rho} \frac{\upchi_{\rho}(w)}{\dim \rho}\,, 
\end{equation*}
 where the sum is over all $($pairwise nonequivalent$)$ irreducible representations $\rho$ of  $\sn$.
\end{propA}
For instance, if $w=(1\;2\;3)$ then according to the relation \eqref{eq:conjugacynumber} the number of permutations conjugate to $w$ is equal to $|C|=\frac{n!}{1^{n-3}(n-3)!\; 3^{1} 1!} = \frac{n(n-1)(n-2)}{3}$. Therefore, the number of pairs $(s,t)\in\snsn$ with a 3-cycle commutator is equal to the expression
\begin{equation*}%\label{eq:}
	n!\cdot\frac{n(n-1)(n-2)}{3}\cdot \sum_{\rho} \frac{\upchi_{\rho}((1\;2\;3))}{\dim \rho}\,. 
\end{equation*}
A proof of the generalized Frobenius's formula can be found in the work \cite{jones} and in the survey \cite{zagier}.

%+++++++++++++++++++++++++++++++++++++++++++++++++++++++++++++++++++
\subsection{Square-tiled surfaces}\label{sec:origamis}
\begin{defi}%\label{} 
A \deff{square-tiled surface}, or shortly an \deff{origami}, is a finite collection of copies of the unitary Euclidian square together with a gluing of the edges:
\begin{itemize}
   \item the right edge of each square is identified to the left edge of some square,
   	\begin{center} 
   	\begin{tikzpicture}[scale=.7]
   	\draw (0,0) rectangle +(1,1);
	\node at (.5,.5) {$i$};
	\node at (1,.5) {$-$};

   	\draw (3,0) rectangle +(1,1);
	\node at (3.5,.5) {$j$};
	\node at (3,.5) {$-$};
	\node at (7,.5) {or};

   	\draw (10,0) rectangle +(1,1);
	\node at (10.5,.5) {$i$};
	\node at (10,.5) {$-$};
	\node at (11,.5) {$-$};
	\end{tikzpicture}
	\end{center}
   \item the top edge of each square is identified to the bottom edge of some square.
   	\begin{center} \mhs\mhss
   	\begin{tikzpicture}[scale=.7]
   	\draw (0,0) rectangle +(1,1);
	\node at (.5,.5) {$i$};
	\node[rotate=90] at (.5,1) {$-$};

   	\draw (3,0) rectangle +(1,1);
	\node at (3.5,.5) {$k$};
	\node[rotate=90] at (3.5,0) {$-$};
	\node at (7,.5) {or};

   	\draw (10,0) rectangle +(1,1);
	\node at (10.5,.5) {$i$};
	\node[rotate=90] at (10.5,1) {$-$};
	\node[rotate=90] at (10.5,0) {$-$};
	\end{tikzpicture}
	\end{center}
\end{itemize}
\end{defi}
The notion of square-tiled surfaces came into sight in 1970s through the works of William P.\ Thurston \cite{thurston} and William A. Veech \cite{veech1}. Nowadays, these objects are being actively studied.

\vs Consider an arbitrary origami $O$ with $n$ squares. Number its squares by the integers from $1$ to $n$. Then we will obtain a pair of permutations $(s,t)\in\snsn$ that indicates how the squares are glued in the horizontal and the vertical directions:
\begin{eqnarray*}
   s(i) &=& j, \quad \textrm{if the right edge of the $i^\textrm{th}$ square is glued to the left edge of the $j^\textrm{th}$ one}, \\
   t(i) &=& k, \quad \textrm{if the top edge of the $i^\textrm{th}$ square is glued to the bottom edge of the $k^\textrm{th}$ one}. 
\end{eqnarray*}

Note that by definition, a square-tiled surface doesn't depend on the numbering of its squares. If we number them otherwise, then we will get another pair of permutations $(s',t')$, which is conjugate to $(s,t)$:
$$s' = u s u\inv\quad\textrm{and}\quad t' = u t u\inv\qquad \textrm{for some } u\in\sn.$$

The origami corresponding to a given pair of permutations $(s,t)\in\snsn$ will be denoted by $O(s,t)$. 
The permutation group $G=\group{s,t}$ generated by the pair $(s,t)$ is called the \deff{monodromy group} of the surface $O=O(s,t)$. One says that an origami is \deff{connected} if its monodromy group is transitive. An origami is \deff{primitive} if its monodromy group is primitive. The reader can find more information on origamis and their monodromy groups in the  Ph.D.\ thesis of David Zmiaikou \cite[2011]{zmiaikou}.

%+++++++++++++++++++++++++++++++++++++++++++++++++++++++++++++++++++
\subsection{Lattices in $\R^{2}$}\label{sec:lattice}
\begin{description}%\label{} 
\item[Basis] of the space $\R^{2}$ is a pair of its vectors $\vv{v}_{1}$ and $\vv{v}_{2}$ such that any vector $\vv{w}\in \R^{2}$ can be presented as a linear combination 
$$\vv{w} = r_{1} \vv{v}_{1} + r_{2} \vv{v}_{2}$$
with real coefficients $r_{1}$ and $r_{2}$.

For instance, the vectors $\vv{(1,0)}$ and $\vv{(0,1)}$ form a basis for the space $\R^{2}$, which is called \textbf{canonical}. It is easy to show that  a pair of vectors $\vv{(x_{1},y_{1})}$ and $\vv{(x_{2},y_{2})}$ is a basis for $\R^{2}$ if and only if the following determinant of the matrix of these vectors 
$$\det \begin{pmatrix} x_{1} & y_{1} \\ x_{2} & y_{2} \end{pmatrix} = x_{1} y_{2} - x_{2} y_{1}$$
differs from zero.

\item[Lattice] generated by a basis $(\vv{v}_{1}, \vv{v}_{2})$ is the set of linear combinations 
$$a_{1} \vv{v}_{1} + a_{2} \vv{v}_{2}$$
with \emph{integer} coefficients $a_{1}$ and $a_{2}$.

We conclude that a lattice is an additive subgroup of $\R^{2}$ with two generators.
As a simple example of a lattice, one has $\Z^{2}$. Note that two distinct bases  can generate the same lattice. For instance,
$$\Z^{2} 	= \sett{ a_{1} \vv{(1,0)} + a_{2} \vv{(0,1)}}{ a_{1},a_{2}\in\Z }
		= \sett{ a_{1} \vv{(3,2)} + a_{2} \vv{(7,5)}}{ a_{1},a_{2}\in\Z }.$$
By the way, a pair of vectors $\vv{(x_{1},y_{1})}$ and $\vv{(x_{2},y_{2})}$ with integer coordinates generate the lattice $\Z^{2}$ if and only if the determinant of the matrix for these vectors equals $1$ or $-1$: 
$$\det \begin{pmatrix} x_{1} & y_{1} \\ x_{2} & y_{2} \end{pmatrix} = \pm 1.$$
All integer $2\times 2$ matrices with determinantем $1$ or $-1$ form the group, which is denoted by $\glzz$.

\vs
\begin{figure}[htbp]
   \begin{center}
   \begin{tikzpicture}[scale=.4, >=latex,shorten >=0pt,shorten <=0pt,line width=1pt,bend angle=20, 
   		circ/.style={circle,inner sep=2pt,draw=black!40,rounded corners, text centered},
   		tblue/.style={dashed,blue,rounded corners}]
	
%      	\draw[help lines] (-2,-2) grid (7,7);
      	\node[left] at (0,-0.5){\small $O$};
      	\node[right] at (3,-0.5){\small $A$};
      	\node[above] at (2,2.3){\small $B$};
%      	\draw[densely dashed,green!60!black] (5,5) -- (5,0);
      
      	\clip (-14.5,-7.5) rectangle +(30,15);

      	\fill[green!60!white] (0,0) -- ++(2,2) -- ++(3,0) -- ++(-2,-2) -- ++(-3,0);
	\foreach \x in {-8,...,8}
		\foreach \y in {-8,...,8}
      			\fill[blue] (\x*3 + \y*2,\y*2) circle (4pt); 
      	\path[->,line width=1pt] (0,0) 	edge node[below,yshift=1pt]{\small $\vv{u}$} (3,0)
      					 		edge node[above,xshift=-4pt,yshift=-2pt]{\small $\vv{v}$} (2,2);

   \end{tikzpicture}
   \caption{Решетка в $\R^{2}$.}
   \label{fig:lattice}
   \end{center} \vspace{-0.3cm}
\end{figure}
An arbitrary lattice $L\subset \R^{2}$ is usually illustrated in the real plane as a set of points: for each vector $\vv{(x,y)} \in L$ one draws the point with coordinates $(x,y)$. Let us find a basis $(\vv{u}, \vv{v})$ for a lattice $L$ as follows (\cf Figure \ref{fig:lattice}): 
\begin{itemize}
   \item Let $A$ be the closest lattice point to the origin of the plane $O$ lying on the horizontal axis and having a positive first coordinate. Take $\vv{u}=\vv{OA}$. 

   \item Let $B$ be  the lattice point  such that the largest of the distances from it to the points $O$ and $A$ is the least (among all lattice points). Take $\vv{v}=\vv{OB}$. 
\end{itemize}
It is not difficult to show that the basis $(\vv{u}, \vv{v})$ of $\R^{2}$, indeed, generates the lattice $L$. 
The parallelogram constructed on the vectors $\vv{u}$ and $\vv{v}$ is called the \textbf{fundamental parallelogram} of the lattice $L$.
\end{description}

%+++++++++++++++++++++++++++++++++++++++++++++++++++++++++++++++++++
\subsection{Several useful sums}
\noindent For any positive integer $n$, the following relations hold:
\setlength{\jot}{10pt} % to increase the vertical space between equations
\begin{align}
1^{2} + 2^{2} + \dots + n^{2} &= \frac{n(n+1)(2n+1)}{6} = \frac{2n^{3}+3n^{2}+n}{6}, \label{eq:n2}\\ 
1^{3} + 2^{3} + \dots + n^{3} &= \frac{n^{2}(n+1)^{2}}{4} = \frac{n^{4}+2n^{3}+n^{2}}{4}, \label{eq:n3}\\
1^{4} + 2^{4} + \dots + n^{4} &= \frac{n(n+1)(2n+1)(3n^{2}+3n-1)}{30} = \frac{6n^{5}+15n^{4}+10n^{3}-n}{30}.\label{eq:n4}
\end{align}
In 1631, the German mathematician Faulhaber obtained a general formula for the sum of powers of the first $n$ positive integers.

Using the formula \eqref{eq:n2}, or else by induction, one can express the sum
\begin{equation}\label{eq:knk}
   \sum_{k=2}^{n} k(n-k) = \frac{(n+3)(n-1)(n-2)}{6}.
\end{equation}

Let $r$ be an arbitrary positive integer. The foolowing equality
\begin{equation}\label{eq:kk1}
   \sum_{k=1}^{n} k(k-1)\cdots (k-r) = \frac{(n+1)n(n-1)\dots (n-r)}{r+2}
\end{equation}
can be proved by induction on $n$. For $n=1$, it is trivially true. Suppose that it is true for $n$ and show it for $n+1$:
\begin{align*} 
  \sum_{k=1}^{n+1} k(k-1)\cdots (k-r) &= (n+1)n\cdots (n-r+1) + \sum_{k=1}^{n} k(k-1)\cdots (k-r) \\
  & = (n+1)n\cdots (n-r+1) + \frac{(n+1)n\cdots (n-r)}{r+2} \\
  & = (n+1)n\cdots (n-r+1)\left(1 + \frac{n-r}{r+2}\right) \\
  & = \frac{(n+2)(n+1)n\cdots (n+1-r)}{r+2}.
\end{align*}
Thus, the equality \eqref{eq:kk1} holds for any natural $n$.

\vs The following infinite series is well-known:
\begin{equation}\label{eq:pi2}
   1 + \frac{1}{2^{2}} + \frac{1}{3^{2}} + \dots + \frac{1}{n^{2}} + \dots = \frac{\pi^{2}}{6}.
\end{equation}
Let $p_{k}$ denote the $k^{\textrm{th}}$ prime number ($p_{1}=2$, $p_{2}=3$, $p_{3}=5$ and so on). Then by the formula for the sum of an infinite geometric progression,
\begin{eqnarray*}
 \frac{1}{\prod\limits_{k=1}^{\infty}\left( 1 - \dfrac{1}{p_{k}^{2}} \right)}  &=& 
%  \frac{1}{1 - \dfrac{1}{p_{1}^{2}}} \cdot \frac{1}{1 - \dfrac{1}{p_{2}^{2}}} \cdots \frac{1}{1 - \dfrac{1}{p_{k}^{2}}}  \cdots &=& 
   \left( 1 + \frac{1}{p_{1}^{2}} +  \dots + \frac{1}{p_{1}^{2m}} + \cdots  \right)
   \left( 1 + \frac{1}{p_{2}^{2}} +  \dots + \frac{1}{p_{2}^{2m}} + \cdots  \right) 
   \cdots \left( 1 + \frac{1}{p_{k}^{2}} + \dots + \frac{1}{p_{k}^{2m}} + \cdots  \right)
   \cdots\\
   &=& 1 + \frac{1}{2^{2}} + \frac{1}{3^{2}} + \dots + \frac{1}{n^{2}} + \dots 
\end{eqnarray*}
from where we get
\begin{equation}\label{eq:6pi2}
   \prod_{k=1}^{\infty}\left( 1 - \frac{1}{p_{k}^{2}} \right) = \frac{6}{\pi^{2}} \,.
\end{equation}

%%%%%%%%%%%%%%%%%%%%%%%%%%%%%%%%%%%%%%%%%%%%%%
%\input{sage.tex}
\newpage
\setcounter{equation}{0}
\setcounter{table}{0}
\section{Programs (Sage)}
\subsection{Calculating $\#\A(n)$ and $\#\B(n)$ by Theorem \ref{theor:A}}

\begin{footnotesize}
\begin{lstlisting}[language=Python] 
# Calculating the cardinalities of A(n) and B(n) divided by n!
# and the probability #A(n)/#B(n) multiplied by P(n)/n, 
# where P(n) is the partition function

def A(n):    # the cardinality of A(n) divided by n!
    A = 3*(n-2)*n*n*prod([1-1/(p*p) for p in prime_divisors(n)])/8
    return A
    
def B(n):    # the cardinality of B(n) divided by n!
    B = 0
    for k in range(1,n+1):
        B = B + ( sigma(k,3) - 2*k*sigma(k,1) + sigma(k,1) ) \
                * Partitions(n-k).cardinality()
    return 3*B/8
    
for n in range(3,256):
    a = A(n)
    b = B(n)
    proba = numerical_approx(Partitions(n).cardinality()*a/(n*b), digits=6) 
    print n,a,b,proba
\end{lstlisting}
\end{footnotesize}

\vvs
\subsection{Calculating $\#\A_{1}(n)$, $\#\B_{1}(n)$, $\#\A_{2}(n)$ and $\#\B_{2}(n)$ by Theorem  \ref{theor:B}}
\begin{footnotesize}
\begin{lstlisting}[language=Python] 
# Calculating the cardinalities of A1(n), B1(n), A2(n) and B2(n) divided by n!
# and the fractions #A1(n)/#B1(n)  and  n #A1(n)/#B1(n)

def A1(n):    # the cardinality of A1(n) divided by n!
    A1 = n*n*prod([1-1/(p*p) for p in prime_divisors(n)]) - 3*euler_phi(n)
    return n*A1/6
    
def B1(n):    # the cardinality of B1(n) divided by n!
    B1 = n*(n-1)*(n-2)/6
    return B1
    
def A2(n):    # the cardinality of A2(n) divided by n!
    A2 = A1(n) + (n+1)*(n-2)/2
    return A2
    
def B2(n):    # the cardinality of B2(n) divided by n!
    B2 = (n-1)*(n-2)*(n^2+5*n+12)/24
    return B2

for n in range(3,256):
    a1 = A1(n); a2 = A2(n) 
    b1 = B1(n); b2 = B2(n)
    proba1 = numerical_approx(a1/b1, digits=6) 
    proba2 = numerical_approx(n*a2/b2, digits=6) 
    print n,a1,b1,proba1,a2,b2,proba2

\end{lstlisting}
\end{footnotesize}

\newpage
\subsection{The table for the first program}

\begin{footnotesize}

\begin{longtable}{l|rr|r}
$n$ & $\dfrac{\#\mathcal{A}(n)}{n!}$ & $\dfrac{\#\mathcal{B}(n)}{n!}$ & $\dfrac{P(n)}{n}\cdot\dfrac{\#\mathcal{A}(n)}{\#\mathcal{B}(n)}$ \\
\hline\hline
$n=$ 3 & 3 & 3 & 1.00000 \\
$n=$ 4 & 9 & 12 & 0.937500 \\
$n=$ 5 & 27 & 42 & 0.900000 \\
$n=$ 6 & 36 & 99 & 0.666667 \\
$n=$ 7 & 90 & 231 & 0.834879 \\
$n=$ 8 & 108 & 462 & 0.642857 \\
$n=$ 9 & 189 & 882 & 0.714286 \\
$n=$ 10 & 216 & 1596 & 0.568421 \\
$n=$ 11 & 405 & 2772 & 0.743802 \\
$n=$ 12 & 360 & 4620 & 0.500000 \\
$n=$ 13 & 693 & 7524 & 0.715587 \\
$n=$ 14 & 648 & 11949 & 0.522937 \\
$n=$ 15 & 936 & 18480 & 0.594286 \\
$n=$ 16 & 1008 & 28182 & 0.516393 \\
$n=$ 17 & 1620 & 42108 & 0.672137 \\
$n=$ 18 & 1296 & 62139 & 0.446097 \\
$n=$ 19 & 2295 & 90216 & 0.656057 \\
$n=$ 20 & 1944 & 129690 & 0.469924 \\
$n=$ 21 & 2736 & 183876 & 0.561173 \\
$n=$ 22 & 2700 & 258720 & 0.475312 \\
$n=$ 23 & 4158 & 359667 & 0.630812 \\
$n=$ 24 & 3168 & 496650 & 0.418605 \\
$n=$ 25 & 5175 & 678942 & 0.596967 \\
$n=$ 26 & 4536 & 922824 & 0.460530 \\
$n=$ 27 & 6075 & 1243284 & 0.544727 \\
$n=$ 28 & 5616 & 1666434 & 0.447497 \\
$n=$ 29 & 8505 & 2216676 & 0.603969 \\
$n=$ 30 & 6048 & 2934960 & 0.384934 \\
$n=$ 31 & 10440 & 3860076 & 0.596934 \\
$n=$ 32 & 8640 & 5055468 & 0.445899 \\
$n=$ 33 & 11160 & 6582114 & 0.521136 \\
$n=$ 34 & 10368 & 8536704 & 0.439728 \\
$n=$ 35 & 14256 & 11013387 & 0.550426 \\
$n=$ 36 & 11016 & 14158620 & 0.388524 \\
$n=$ 37 & 17955 & 18115944 & 0.579588 \\
$n=$ 38 & 14580 & 23103531 & 0.432035 \\
$n=$ 39 & 18648 & 29339079 & 0.508238 \\
$n=$ 40 & 16416 & 37143414 & 0.412550 \\
$n=$ 41 & 24570 & 46842642 & 0.570360 \\
$n=$ 42 & 17280 & 58906848 & 0.371388 \\
$n=$ 43 & 28413 & 73816743 & 0.566278 \\
$n=$ 44 & 22680 & 92254470 & 0.420026 \\
$n=$ 45 & 27864 & 114926262 & 0.480236 \\
$n=$ 46 & 26136 & 142810932 & 0.419963 \\
$n=$ 47 & 37260 & 176935080 & 0.558966 \\
$n=$ 48 & 26496 & 218698536 & 0.371720 \\
$n=$ 49 & 41454 & 269577000 & 0.544565 \\
$n=$ 50 & 32400 & 331556148 & 0.399143 \\
$n=$ 51 & 42336 & 406749651 & 0.489689 \\
$n=$ 52 & 37800 & 497949144 & 0.411073 \\
$n=$ 53 & 53703 & 608155506 & 0.549707 \\
$n=$ 54 & 37908 & 741282927 & 0.365691 \\
$n=$ 55 & 57240 & 901553268 & 0.520940 \\
$n=$ 56 & 46656 & 1094417478 & 0.401052 \\
$n=$ 57 & 59400 & 1325794470 & 0.482739 \\
$n=$ 58 & 52920 & 1603220388 & 0.407041 \\
$n=$ 59 & 74385 & 1934935068 & 0.541996 \\
$n=$ 60 & 50112 & 2331328074 & 0.346238 \\
$n=$ 61 & 82305 & 2803785600 & 0.539700 \\
$n=$ 62 & 64800 & 3366550440 & 0.403639 \\
$n=$ 63 & 79056 & 4035301935 & 0.468165 \\
$n=$ 64 & 71424 & 4829450472 & 0.402460 \\
$n=$ 65 & 95256 & 5770461060 & 0.511113 \\
$n=$ 66 & 69120 & 6884707896 & 0.353444 \\
$n=$ 67 & 109395 & 8201402319 & 0.533481 \\
$n=$ 68 & 85536 & 9756209694 & 0.398106 \\
$n=$ 69 & 106128 & 11588746128 & 0.471741 \\
$n=$ 70 & 88128 & 13747002864 & 0.374382 \\
$n=$ 71 & 130410 & 16284447375 & 0.529809 \\
$n=$ 72 & 90720 & 19265466990 & 0.352699 \\
$n=$ 73 & 141858 & 22761858636 & 0.528094 \\
$n=$ 74 & 110808 & 26859653700 & 0.395234 \\
$n=$ 75 & 131400 & 31654953792 & 0.449320 \\
$n=$ 76 & 119880 & 37262223054 & 0.393222 \\
$n=$ 77 & 162000 & 43809552402 & 0.510005 \\
$n=$ 78 & 114912 & 51448804176 & 0.347403 \\
$n=$ 79 & 180180 & 60349988160 & 0.523371 \\
$n=$ 80 & 134784 & 70713852366 & 0.376361 \\
$n=$ 81 & 172773 & 82765353858 & 0.464001 \\
$n=$ 82 & 151200 & 96768892143 & 0.390741 \\
$n=$ 83 & 209223 & 113021049603 & 0.520528 \\
$n=$ 84 & 141696 & 131869111374 & 0.339544 \\
$n=$ 85 & 215136 & 153702652488 & 0.496764 \\
$n=$ 86 & 174636 & 178976363832 & 0.388745 \\
$n=$ 87 & 214200 & 208200528591 & 0.459865 \\
$n=$ 88 & 185760 & 241968535644 & 0.384795 \\
$n=$ 89 & 258390 & 280946616720 & 0.516650 \\
$n=$ 90 & 171072 & 325907179719 & 0.330309 \\
$n=$ 91 & 269136 & 377717284035 & 0.502002 \\
$n=$ 92 & 213840 & 437379463038 & 0.385463 \\
$n=$ 93 & 262080 & 506019158148 & 0.456722 \\
$n=$ 94 & 228528 & 584933843868 & 0.385161 \\
$n=$ 95 & 301320 & 675580163907 & 0.491329 \\
$n=$ 96 & 216576 & 779633241618 & 0.341784 \\
$n=$ 97 & 335160 & 898973004888 & 0.512081 \\
$n=$ 98 & 254016 & 1035756508248 & 0.375874 \\
$n=$ 99 & 314280 & 1192404057075 & 0.450542 \\
$n=$ 100 & 264600 & 1371686078070 & 0.367611 \\
$n=$ 101 & 378675 & 1576710505656 & 0.510014 \\
$n=$ 102 & 259200 & 1811027505411 & 0.338536 \\
$n=$ 103 & 401778 & 2078617830114 & 0.509029 \\
$n=$ 104 & 308448 & 2384020004256 & 0.379189 \\
$n=$ 105 & 355968 & 2732321142936 & 0.424746 \\
$n=$ 106 & 328536 & 3129309626520 & 0.380603 \\
$n=$ 107 & 450765 & 3581471185215 & 0.507146 \\
$n=$ 108 & 309096 & 4096172615898 & 0.337824 \\
$n=$ 109 & 476685 & 4681663620546 & 0.506245 \\
$n=$ 110 & 349920 & 5347299365100 & 0.361200 \\
$n=$ 111 & 447336 & 6103551249411 & 0.448927 \\
$n=$ 112 & 380160 & 6962273875326 & 0.371008 \\
$n=$ 113 & 531468 & 7936730093922 & 0.504520 \\
$n=$ 114 & 362880 & 9041912000331 & 0.335164 \\
$n=$ 115 & 536976 & 10294582596087 & 0.482668 \\
$n=$ 116 & 430920 & 11713663668234 & 0.377046 \\
$n=$ 117 & 521640 & 13320300598374 & 0.444400 \\
$n=$ 118 & 454140 & 15138327574623 & 0.376790 \\
$n=$ 119 & 606528 & 17194365434631 & 0.490192 \\
$n=$ 120 & 407808 & 19518380501214 & 0.321125 \\
$n=$ 121 & 647955 & 22143821814114 & 0.497235 \\
$n=$ 122 & 502200 & 25108295294676 & 0.375652 \\
$n=$ 123 & 609840 & 28453753086099 & 0.444743 \\
$n=$ 124 & 527040 & 32227301758050 & 0.374812 \\
$n=$ 125 & 691875 & 36481462023930 & 0.479913 \\
$n=$ 126 & 482112 & 41275139033940 & 0.326239 \\
$n=$ 127 & 756000 & 46673966702055 & 0.499171 \\
$n=$ 128 & 580608 & 52751476182174 & 0.374141 \\
$n=$ 129 & 704088 & 59589548978388 & 0.442882 \\
$n=$ 130 & 580608 & 67279817460768 & 0.356562 \\
$n=$ 131 & 830115 & 75924259717614 & 0.497810 \\
$n=$ 132 & 561600 & 85636879631886 & 0.328931 \\
$n=$ 133 & 848880 & 96544472830806 & 0.485686 \\
$n=$ 134 & 666468 & 108788647157883 & 0.372561 \\
$n=$ 135 & 775656 & 122526804400938 & 0.423714 \\
$n=$ 136 & 694656 & 137934560548434 & 0.370880 \\
$n=$ 137 & 950130 & 155207002394706 & 0.495886 \\
$n=$ 138 & 646272 & 174561588816615 & 0.329778 \\
$n=$ 139 & 992565 & 196239739104123 & 0.495274 \\
$n=$ 140 & 715392 & 220510316860680 & 0.349126 \\
$n=$ 141 & 920736 & 247671631144434 & 0.439535 \\
$n=$ 142 & 793800 & 278055607314156 & 0.370731 \\
$n=$ 143 & 1065960 & 312030305244999 & 0.487132 \\
$n=$ 144 & 736128 & 350004914182638 & 0.329218 \\
$n=$ 145 & 1081080 & 392432894620998 & 0.473237 \\
$n=$ 146 & 863136 & 439817967777015 & 0.369875 \\
$n=$ 147 & 1023120 & 492718025456790 & 0.429262 \\
$n=$ 148 & 898776 & 551752293927450 & 0.369259 \\
$n=$ 149 & 1223775 & 617606194024050 & 0.492409 \\
$n=$ 150 & 799200 & 691039911568743 & 0.314983 \\
$n=$ 151 & 1273950 & 772894401091422 & 0.491872 \\
$n=$ 152 & 972000 & 864101647122714 & 0.367701 \\
$n=$ 153 & 1174176 & 965692065073758 & 0.435260 \\
$n=$ 154 & 984960 & 1078806762045012 & 0.357832 \\
$n=$ 155 & 1321920 & 1204706649809355 & 0.470727 \\
$n=$ 156 & 931392 & 1344787094124012 & 0.325130 \\
$n=$ 157 & 1432665 & 1500589083803934 & 0.490326 \\
$n=$ 158 & 1095120 & 1673816743061772 & 0.367514 \\
$n=$ 159 & 1322568 & 1866350995369287 & 0.435268 \\
$n=$ 160 & 1092096 & 2080270461947106 & 0.352517 \\
$n=$ 161 & 1511136 & 2317868159571888 & 0.478471 \\
$n=$ 162 & 1049760 & 2581676436549903 & 0.326084 \\
$n=$ 163 & 1604043 & 2874487304423154 & 0.488870 \\
$n=$ 164 & 1224720 & 3199382197526328 & 0.366272 \\
$n=$ 165 & 1408320 & 3559756690115340 & 0.413341 \\
$n=$ 166 & 1270836 & 3959355961257543 & 0.366090 \\
$n=$ 167 & 1725570 & 4402304811032634 & 0.487945 \\
$n=$ 168 & 1147392 & 4893149912268492 & 0.318521 \\
$n=$ 169 & 1778049 & 5436896157324996 & 0.484627 \\
$n=$ 170 & 1306368 & 6039056997339408 & 0.349635 \\
$n=$ 171 & 1642680 & 6705698391579594 & 0.431752 \\
$n=$ 172 & 1413720 & 7443498705358386 & 0.364942 \\
$n=$ 173 & 1919133 & 8259801797584260 & 0.486620 \\
$n=$ 174 & 1300320 & 9162688266939000 & 0.323896 \\
$n=$ 175 & 1868400 & 10161039427496706 & 0.457236 \\
$n=$ 176 & 1503360 & 11264622040335510 & 0.361488 \\
$n=$ 177 & 1827000 & 12484165311019248 & 0.431691 \\
$n=$ 178 & 1568160 & 13831461547260159 & 0.364143 \\
$n=$ 179 & 2126655 & 15319458735881658 & 0.485365 \\
$n=$ 180 & 1384128 & 16962380066992416 & 0.310514 \\
$n=$ 181 & 2199015 & 18775835026110534 & 0.484961 \\
$n=$ 182 & 1632960 & 20776961284889271 & 0.354055 \\
$n=$ 183 & 2019960 & 22984557853851825 & 0.430621 \\
$n=$ 184 & 1729728 & 25419253353884082 & 0.362600 \\
$n=$ 185 & 2253096 & 28103665458294714 & 0.464481 \\
$n=$ 186 & 1589760 & 31062600353936955 & 0.322328 \\
$n=$ 187 & 2397600 & 34323243346612374 & 0.478146 \\
$n=$ 188 & 1848096 & 37915395208033464 & 0.362547 \\
$n=$ 189 & 2181168 & 41871699788260650 & 0.420943 \\
$n=$ 190 & 1827360 & 46227923813707419 & 0.346969 \\
$n=$ 191 & 2585520 & 51023228442343290 & 0.483042 \\
$n=$ 192 & 1751040 & 56300500326000918 & 0.321915 \\
$n=$ 193 & 2667888 & 62106675129029124 & 0.482677 \\
$n=$ 194 & 2032128 & 68493129084028803 & 0.361844 \\
$n=$ 195 & 2334528 & 75516064029414129 & 0.409153 \\
$n=$ 196 & 2053296 & 83236970116449534 & 0.354235 \\
$n=$ 197 & 2837835 & 91723083663501654 & 0.481965 \\
$n=$ 198 & 1905120 & 101047933645862742 & 0.318547 \\
$n=$ 199 & 2925450 & 111291885452415525 & 0.481617 \\
$n=$ 200 & 2138400 & 122542786024724076 & 0.346649 \\
$n=$ 201 & 2679336 & 134896609028552784 & 0.427714 \\
$n=$ 202 & 2295000 & 148458215846526687 & 0.360802 \\
$n=$ 203 & 3039120 & 163342120438969941 & 0.470573 \\
$n=$ 204 & 2094336 & 179673386486211618 & 0.319412 \\
$n=$ 205 & 3069360 & 197588533129562652 & 0.461117 \\
$n=$ 206 & 2434536 & 217236592093804140 & 0.360305 \\
$n=$ 207 & 2922480 & 238780179315327894 & 0.426116 \\
$n=$ 208 & 2491776 & 262396739637218088 & 0.357964 \\
$n=$ 209 & 3353400 & 288279813695479650 & 0.474681 \\
$n=$ 210 & 2156544 & 316640502633826371 & 0.300809 \\
$n=$ 211 & 3489255 & 347708964313511493 & 0.479637 \\
$n=$ 212 & 2653560 & 381736136035547706 & 0.359490 \\
$n=$ 213 & 3190320 & 418995500128643802 & 0.425991 \\
$n=$ 214 & 2730348 & 459785108688456483 & 0.359354 \\
$n=$ 215 & 3542616 & 504429665453630301 & 0.459617 \\
$n=$ 216 & 2496096 & 553282904139290952 & 0.319249 \\
$n=$ 217 & 3715200 & 606730041005201778 & 0.468465 \\
$n=$ 218 & 2886840 & 665190567041523327 & 0.358899 \\
$n=$ 219 & 3468528 & 729121134982345413 & 0.425185 \\
$n=$ 220 & 2825280 & 799018835131758714 & 0.341512 \\
$n=$ 221 & 3973536 & 875424591304180236 & 0.473653 \\
$n=$ 222 & 2708640 & 958927001865843087 & 0.318421 \\
$n=$ 223 & 4121208 & 1050166330936025529 & 0.477826 \\
$n=$ 224 & 3068928 & 1149839009723107200 & 0.350957 \\
$n=$ 225 & 3612600 & 1258702323871495356 & 0.407508 \\
$n=$ 226 & 3217536 & 1377579685178100327 & 0.358027 \\
$n=$ 227 & 4347675 & 1507366132665974724 & 0.477256 \\
$n=$ 228 & 2928960 & 1649034503019098316 & 0.317202 \\
$n=$ 229 & 4463955 & 1803641881580649222 & 0.476976 \\
$n=$ 230 & 3250368 & 1972336820806911186 & 0.342681 \\
$n=$ 231 & 3957120 & 2156366899923527676 & 0.411664 \\
$n=$ 232 & 3477600 & 2357087164124558250 & 0.357005 \\
$n=$ 233 & 4702698 & 2575968977502449760 & 0.476429 \\
$n=$ 234 & 3157056 & 2814609883916021835 & 0.315656 \\
$n=$ 235 & 4630176 & 3074743966840613277 & 0.456915 \\
$n=$ 236 & 3664440 & 3358253365177864734 & 0.356925 \\
$n=$ 237 & 4399200 & 3667180388444863818 & 0.422958 \\
$n=$ 238 & 3670272 & 4003740957196110207 & 0.348338 \\
$n=$ 239 & 5076540 & 4370338761889150869 & 0.475634 \\
$n=$ 240 & 3290112 & 4769580940928898822 & 0.304328 \\
$n=$ 241 & 5205420 & 5204294616553387620 & 0.475376 \\
$n=$ 242 & 3920400 & 5677545173275598931 & 0.353497 \\
$n=$ 243 & 4743603 & 6192655557546174948 & 0.422337 \\
$n=$ 244 & 4051080 & 6753227575531262298 & 0.356157 \\
$n=$ 245 & 5143824 & 7363164404629280352 & 0.446579 \\
$n=$ 246 & 3689280 & 8026695395935559994 & 0.316314 \\
$n=$ 247 & 5556600 & 8748402315038539272 & 0.470514 \\
$n=$ 248 & 4250880 & 9533248210691409798 & 0.355509 \\
$n=$ 249 & 5104008 & 10386607984040095500 & 0.421614 \\
$n=$ 250 & 4185000 & 11314301971302511335 & 0.341469 \\
$n=$ 251 & 5882625 & 12322631534338835217 & 0.474134 \\
$n=$ 252 & 3888000 & 13418418106425455256 & 0.309565 \\
$n=$ 253 & 5963760 & 14609044603394782002 & 0.469097 \\
$n=$ 254 & 4572288 & 15902500797767970771 & 0.355316 \\
$n=$ 255 & 5246208 & 17307431474208903984 & 0.402796 \\
$n=$ 256 & 4681728 & 18833189129072476098 & 0.355162
\end{longtable}

\end{footnotesize}

%%%%%%%%%%%%%%%%%%%%%%%%%%%%%%%%%%%%%%%%%%%%%%%
\newpage
\subsection{The table for the second program}

\begin{footnotesize}

\begin{longtable}{l|rrr|rrr|l}
$n$ & $\dfrac{\#\mathcal{A}_1(n)}{n!}$ & $\dfrac{\#\mathcal{B}_1(n)}{n!}$ & $\dfrac{\#\mathcal{A}_1(n)}{\#\mathcal{B}_1(n)}$ & $\dfrac{\#\mathcal{A}_2(n)}{n!}$ & $\dfrac{\#\mathcal{B}_2(n)}{n!}$ & $n\cdot\dfrac{\#\mathcal{A}_2(n)}{\#\mathcal{B}_2(n)}$ & $\;n$ \\
\hline\hline
$n=$ 3 & 1 & 1 & 1.00000 & 3 & 3 & 3.00000 & $\;n=$ 3 \\
$n=$ 4 & 4 & 4 & 1.00000 & 9 & 12 & 3.00000 & $\;n=$ 4 \\
$n=$ 5 & 10 & 10 & 1.00000 & 19 & 31 & 3.06452 & $\;n=$ 5 \\
$n=$ 6 & 18 & 20 & 0.900000 & 32 & 65 & 2.95385 & $\;n=$ 6 \\
$n=$ 7 & 35 & 35 & 1.00000 & 55 & 120 & 3.20833 & $\;n=$ 7 \\
$n=$ 8 & 48 & 56 & 0.857143 & 75 & 203 & 2.95567 & $\;n=$ 8 \\
$n=$ 9 & 81 & 84 & 0.964286 & 116 & 322 & 3.24224 & $\;n=$ 9 \\
$n=$ 10 & 100 & 120 & 0.833333 & 144 & 486 & 2.96296 & $\;n=$ 10 \\
$n=$ 11 & 165 & 165 & 1.00000 & 219 & 705 & 3.41702 & $\;n=$ 11 \\
$n=$ 12 & 168 & 220 & 0.763636 & 233 & 990 & 2.82424 & $\;n=$ 12 \\
$n=$ 13 & 286 & 286 & 1.00000 & 363 & 1353 & 3.48780 & $\;n=$ 13 \\
$n=$ 14 & 294 & 364 & 0.807692 & 384 & 1807 & 2.97510 & $\;n=$ 14 \\
$n=$ 15 & 420 & 455 & 0.923077 & 524 & 2366 & 3.32206 & $\;n=$ 15 \\
$n=$ 16 & 448 & 560 & 0.800000 & 567 & 3045 & 2.97931 & $\;n=$ 16 \\
$n=$ 17 & 680 & 680 & 1.00000 & 815 & 3860 & 3.58938 & $\;n=$ 17 \\
$n=$ 18 & 594 & 816 & 0.727941 & 746 & 4828 & 2.78128 & $\;n=$ 18 \\
$n=$ 19 & 969 & 969 & 1.00000 & 1139 & 5967 & 3.62678 & $\;n=$ 19 \\
$n=$ 20 & 880 & 1140 & 0.771930 & 1069 & 7296 & 2.93037 & $\;n=$ 20 \\
$n=$ 21 & 1218 & 1330 & 0.915789 & 1427 & 8835 & 3.39185 & $\;n=$ 21 \\
$n=$ 22 & 1210 & 1540 & 0.785714 & 1440 & 10605 & 2.98727 & $\;n=$ 22 \\
$n=$ 23 & 1771 & 1771 & 1.00000 & 2023 & 12628 & 3.68459 & $\;n=$ 23 \\
$n=$ 24 & 1440 & 2024 & 0.711462 & 1715 & 14927 & 2.75742 & $\;n=$ 24 \\
$n=$ 25 & 2250 & 2300 & 0.978261 & 2549 & 17526 & 3.63603 & $\;n=$ 25 \\
$n=$ 26 & 2028 & 2600 & 0.780000 & 2352 & 20450 & 2.99032 & $\;n=$ 26 \\
$n=$ 27 & 2673 & 2925 & 0.913846 & 3023 & 23725 & 3.44029 & $\;n=$ 27 \\
$n=$ 28 & 2520 & 3276 & 0.769231 & 2897 & 27378 & 2.96282 & $\;n=$ 28 \\
$n=$ 29 & 3654 & 3654 & 1.00000 & 4059 & 31437 & 3.74435 & $\;n=$ 29 \\
$n=$ 30 & 2760 & 4060 & 0.679803 & 3194 & 35931 & 2.66678 & $\;n=$ 30 \\
$n=$ 31 & 4495 & 4495 & 1.00000 & 4959 & 40890 & 3.75957 & $\;n=$ 31 \\
$n=$ 32 & 3840 & 4960 & 0.774194 & 4335 & 46345 & 2.99320 & $\;n=$ 32 \\
$n=$ 33 & 4950 & 5456 & 0.907258 & 5477 & 52328 & 3.45400 & $\;n=$ 33 \\
$n=$ 34 & 4624 & 5984 & 0.772727 & 5184 & 58872 & 2.99389 & $\;n=$ 34 \\
$n=$ 35 & 6300 & 6545 & 0.962567 & 6894 & 66011 & 3.65530 & $\;n=$ 35 \\
$n=$ 36 & 4968 & 7140 & 0.695798 & 5597 & 73780 & 2.73098 & $\;n=$ 36 \\
$n=$ 37 & 7770 & 7770 & 1.00000 & 8435 & 82215 & 3.79608 & $\;n=$ 37 \\
$n=$ 38 & 6498 & 8436 & 0.770270 & 7200 & 91353 & 2.99498 & $\;n=$ 38 \\
$n=$ 39 & 8268 & 9139 & 0.904694 & 9008 & 101232 & 3.47037 & $\;n=$ 39 \\
$n=$ 40 & 7360 & 9880 & 0.744939 & 8139 & 111891 & 2.90962 & $\;n=$ 40 \\
$n=$ 41 & 10660 & 10660 & 1.00000 & 11479 & 123370 & 3.81486 & $\;n=$ 41 \\
$n=$ 42 & 7812 & 11480 & 0.680488 & 8672 & 135710 & 2.68384 & $\;n=$ 42 \\
$n=$ 43 & 12341 & 12341 & 1.00000 & 13243 & 148953 & 3.82301 & $\;n=$ 43 \\
$n=$ 44 & 10120 & 13244 & 0.764120 & 11065 & 163142 & 2.98427 & $\;n=$ 44 \\
$n=$ 45 & 12420 & 14190 & 0.875264 & 13409 & 178321 & 3.38381 & $\;n=$ 45 \\
$n=$ 46 & 11638 & 15180 & 0.766667 & 12672 & 194535 & 2.99644 & $\;n=$ 46 \\
$n=$ 47 & 16215 & 16215 & 1.00000 & 17295 & 211830 & 3.83735 & $\;n=$ 47 \\
$n=$ 48 & 11904 & 17296 & 0.688252 & 13031 & 230253 & 2.71652 & $\;n=$ 48 \\
$n=$ 49 & 18179 & 18424 & 0.986702 & 19354 & 249852 & 3.79563 & $\;n=$ 49 \\
$n=$ 50 & 14500 & 19600 & 0.739796 & 15724 & 270676 & 2.90458 & $\;n=$ 50 \\
$n=$ 51 & 18768 & 20825 & 0.901224 & 20042 & 292775 & 3.49122 & $\;n=$ 51 \\
$n=$ 52 & 16848 & 22100 & 0.762353 & 18173 & 316200 & 2.98860 & $\;n=$ 52 \\
$n=$ 53 & 23426 & 23426 & 1.00000 & 24803 & 341003 & 3.85498 & $\;n=$ 53 \\
$n=$ 54 & 17010 & 24804 & 0.685776 & 18440 & 367237 & 2.71149 & $\;n=$ 54 \\
$n=$ 55 & 25300 & 26235 & 0.964361 & 26784 & 394956 & 3.72983 & $\;n=$ 55 \\
$n=$ 56 & 20832 & 27720 & 0.751515 & 22371 & 424215 & 2.95316 & $\;n=$ 56 \\
$n=$ 57 & 26334 & 29260 & 0.900000 & 27929 & 455070 & 3.49826 & $\;n=$ 57 \\
$n=$ 58 & 23548 & 30856 & 0.763158 & 25200 & 487578 & 2.99767 & $\;n=$ 58 \\
$n=$ 59 & 32509 & 32509 & 1.00000 & 34219 & 521797 & 3.86917 & $\;n=$ 59 \\
$n=$ 60 & 22560 & 34220 & 0.659264 & 24329 & 557786 & 2.61703 & $\;n=$ 60 \\
$n=$ 61 & 35990 & 35990 & 1.00000 & 37819 & 595605 & 3.87330 & $\;n=$ 61 \\
$n=$ 62 & 28830 & 37820 & 0.762295 & 30720 & 635315 & 2.99795 & $\;n=$ 62 \\
$n=$ 63 & 35154 & 39711 & 0.885246 & 37106 & 676978 & 3.45311 & $\;n=$ 63 \\
$n=$ 64 & 31744 & 41664 & 0.761905 & 33759 & 720657 & 2.99806 & $\;n=$ 64 \\
$n=$ 65 & 42120 & 43680 & 0.964286 & 44199 & 766416 & 3.74853 & $\;n=$ 65 \\
$n=$ 66 & 31020 & 45760 & 0.677885 & 33164 & 814320 & 2.68792 & $\;n=$ 66 \\
$n=$ 67 & 47905 & 47905 & 1.00000 & 50115 & 864435 & 3.88428 & $\;n=$ 67 \\
$n=$ 68 & 38080 & 50116 & 0.759837 & 40357 & 916828 & 2.99323 & $\;n=$ 68 \\
$n=$ 69 & 47058 & 52394 & 0.898156 & 49403 & 971567 & 3.50857 & $\;n=$ 69 \\
$n=$ 70 & 39480 & 54740 & 0.721228 & 41894 & 1028721 & 2.85070 & $\;n=$ 70 \\
$n=$ 71 & 57155 & 57155 & 1.00000 & 59639 & 1088360 & 3.89060 & $\;n=$ 71 \\
$n=$ 72 & 40608 & 59640 & 0.680885 & 43163 & 1150555 & 2.70108 & $\;n=$ 72 \\
$n=$ 73 & 62196 & 62196 & 1.00000 & 64823 & 1215378 & 3.89350 & $\;n=$ 73 \\
$n=$ 74 & 49284 & 64824 & 0.760274 & 51984 & 1282902 & 2.99853 & $\;n=$ 74 \\
$n=$ 75 & 58500 & 67525 & 0.866346 & 61274 & 1353201 & 3.39606 & $\;n=$ 75 \\
$n=$ 76 & 53352 & 70300 & 0.758919 & 56201 & 1426350 & 2.99455 & $\;n=$ 76 \\
$n=$ 77 & 71610 & 73150 & 0.978947 & 74535 & 1502425 & 3.81995 & $\;n=$ 77 \\
$n=$ 78 & 51480 & 76076 & 0.676692 & 54482 & 1581503 & 2.68706 & $\;n=$ 78 \\
$n=$ 79 & 79079 & 79079 & 1.00000 & 82159 & 1663662 & 3.90137 & $\;n=$ 79 \\
$n=$ 80 & 60160 & 82160 & 0.732230 & 63319 & 1748981 & 2.89627 & $\;n=$ 80 \\
$n=$ 81 & 76545 & 85320 & 0.897152 & 79784 & 1837540 & 3.51693 & $\;n=$ 81 \\
$n=$ 82 & 67240 & 88560 & 0.759259 & 70560 & 1929420 & 2.99879 & $\;n=$ 82 \\
$n=$ 83 & 91881 & 91881 & 1.00000 & 95283 & 2024703 & 3.90600 & $\;n=$ 83 \\
$n=$ 84 & 63504 & 95284 & 0.666471 & 66989 & 2123472 & 2.64994 & $\;n=$ 84 \\
$n=$ 85 & 95200 & 98770 & 0.963855 & 98769 & 2225811 & 3.77182 & $\;n=$ 85 \\
$n=$ 86 & 77658 & 102340 & 0.758824 & 81312 & 2331805 & 2.99889 & $\;n=$ 86 \\
$n=$ 87 & 95004 & 105995 & 0.896306 & 98744 & 2441540 & 3.51857 & $\;n=$ 87 \\
$n=$ 88 & 82720 & 109736 & 0.753809 & 86547 & 2555103 & 2.98076 & $\;n=$ 88 \\
$n=$ 89 & 113564 & 113564 & 1.00000 & 117479 & 2672582 & 3.91218 & $\;n=$ 89 \\
$n=$ 90 & 76680 & 117480 & 0.652707 & 80684 & 2794066 & 2.59892 & $\;n=$ 90 \\
$n=$ 91 & 119028 & 121485 & 0.979775 & 123122 & 2919645 & 3.83749 & $\;n=$ 91 \\
$n=$ 92 & 95128 & 125580 & 0.757509 & 99313 & 3049410 & 2.99625 & $\;n=$ 92 \\
$n=$ 93 & 116250 & 129766 & 0.895843 & 120527 & 3183453 & 3.52102 & $\;n=$ 93 \\
$n=$ 94 & 101614 & 134044 & 0.758065 & 105984 & 3321867 & 2.99907 & $\;n=$ 94 \\
$n=$ 95 & 133380 & 138415 & 0.963624 & 137844 & 3464746 & 3.77955 & $\;n=$ 95 \\
$n=$ 96 & 96768 & 142880 & 0.677268 & 101327 & 3612185 & 2.69294 & $\;n=$ 96 \\
$n=$ 97 & 147440 & 147440 & 1.00000 & 152095 & 3764280 & 3.91927 & $\;n=$ 97 \\
$n=$ 98 & 113190 & 152096 & 0.744201 & 117942 & 3921128 & 2.94770 & $\;n=$ 98 \\
$n=$ 99 & 139590 & 156849 & 0.889964 & 144440 & 4082827 & 3.50237 & $\;n=$ 99 \\
$n=$ 100 & 118000 & 161700 & 0.729746 & 122949 & 4249476 & 2.89327 & $\;n=$ 100 \\
$n=$ 101 & 166650 & 166650 & 1.00000 & 171699 & 4421175 & 3.92240 & $\;n=$ 101 \\
$n=$ 102 & 115872 & 171700 & 0.674851 & 121022 & 4598025 & 2.68468 & $\;n=$ 102 \\
$n=$ 103 & 176851 & 176851 & 1.00000 & 182103 & 4780128 & 3.92387 & $\;n=$ 103 \\
$n=$ 104 & 137280 & 182104 & 0.753855 & 142635 & 4967587 & 2.98617 & $\;n=$ 104 \\
$n=$ 105 & 158760 & 187460 & 0.846901 & 164219 & 5160506 & 3.34134 & $\;n=$ 105 \\
$n=$ 106 & 146068 & 192920 & 0.757143 & 151632 & 5358990 & 2.99926 & $\;n=$ 106 \\
$n=$ 107 & 198485 & 198485 & 1.00000 & 204155 & 5563145 & 3.92666 & $\;n=$ 107 \\
$n=$ 108 & 138024 & 204156 & 0.676071 & 143801 & 5773078 & 2.69016 & $\;n=$ 108 \\
$n=$ 109 & 209934 & 209934 & 1.00000 & 215819 & 5988897 & 3.92798 & $\;n=$ 109 \\
$n=$ 110 & 156200 & 215820 & 0.723751 & 162194 & 6210711 & 2.87267 & $\;n=$ 110 \\
$n=$ 111 & 198468 & 221815 & 0.894746 & 204572 & 6438630 & 3.52676 & $\;n=$ 111 \\
$n=$ 112 & 169344 & 227920 & 0.742998 & 175559 & 6672765 & 2.94670 & $\;n=$ 112 \\
$n=$ 113 & 234136 & 234136 & 1.00000 & 240463 & 6913228 & 3.93048 & $\;n=$ 113 \\
$n=$ 114 & 162108 & 240464 & 0.674147 & 168548 & 7160132 & 2.68354 & $\;n=$ 114 \\
$n=$ 115 & 237820 & 246905 & 0.963204 & 244374 & 7413591 & 3.79074 & $\;n=$ 115 \\
$n=$ 116 & 191632 & 253460 & 0.756064 & 198301 & 7673720 & 2.99762 & $\;n=$ 116 \\
$n=$ 117 & 231660 & 260130 & 0.890555 & 238445 & 7940635 & 3.51333 & $\;n=$ 117 \\
$n=$ 118 & 201898 & 266916 & 0.756410 & 208800 & 8214453 & 2.99940 & $\;n=$ 118 \\
$n=$ 119 & 268464 & 273819 & 0.980443 & 275484 & 8495292 & 3.85891 & $\;n=$ 119 \\
$n=$ 120 & 182400 & 280840 & 0.649480 & 189539 & 8783271 & 2.58955 & $\;n=$ 120 \\
$n=$ 121 & 286165 & 287980 & 0.993697 & 293424 & 9078510 & 3.91081 & $\;n=$ 121 \\
$n=$ 122 & 223260 & 295240 & 0.756198 & 230640 & 9381130 & 2.99943 & $\;n=$ 122 \\
$n=$ 123 & 270600 & 302621 & 0.894188 & 278102 & 9691253 & 3.52963 & $\;n=$ 123 \\
$n=$ 124 & 234360 & 310124 & 0.755698 & 241985 & 10009002 & 2.99792 & $\;n=$ 124 \\
$n=$ 125 & 306250 & 317750 & 0.963808 & 313999 & 10334501 & 3.79795 & $\;n=$ 125 \\
$n=$ 126 & 215460 & 325500 & 0.661936 & 223334 & 10667875 & 2.63783 & $\;n=$ 126 \\
$n=$ 127 & 333375 & 333375 & 1.00000 & 341375 & 11009250 & 3.93802 & $\;n=$ 127 \\
$n=$ 128 & 258048 & 341376 & 0.755906 & 266175 & 11358753 & 2.99948 & $\;n=$ 128 \\
$n=$ 129 & 312438 & 349504 & 0.893947 & 320693 & 11716512 & 3.53086 & $\;n=$ 129 \\
$n=$ 130 & 258960 & 357760 & 0.723837 & 267344 & 12082656 & 2.87641 & $\;n=$ 130 \\
$n=$ 131 & 366145 & 366145 & 1.00000 & 374659 & 12457315 & 3.93988 & $\;n=$ 131 \\
$n=$ 132 & 250800 & 374660 & 0.669407 & 259445 & 12840620 & 2.66706 & $\;n=$ 132 \\
$n=$ 133 & 375858 & 383306 & 0.980569 & 384635 & 13232703 & 3.86591 & $\;n=$ 133 \\
$n=$ 134 & 296274 & 392084 & 0.755639 & 305184 & 13633697 & 2.99953 & $\;n=$ 134 \\
$n=$ 135 & 345060 & 400995 & 0.860509 & 354104 & 14043736 & 3.40394 & $\;n=$ 135 \\
$n=$ 136 & 308992 & 410040 & 0.753565 & 318171 & 14462955 & 2.99187 & $\;n=$ 136 \\
$n=$ 137 & 419220 & 419220 & 1.00000 & 428535 & 14891490 & 3.94247 & $\;n=$ 137 \\
$n=$ 138 & 288420 & 428536 & 0.673036 & 297872 & 15329478 & 2.68152 & $\;n=$ 138 \\
$n=$ 139 & 437989 & 437989 & 1.00000 & 447579 & 15777057 & 3.94329 & $\;n=$ 139 \\
$n=$ 140 & 319200 & 447580 & 0.713169 & 328929 & 16234366 & 2.83658 & $\;n=$ 140 \\
$n=$ 141 & 408618 & 457310 & 0.893525 & 418487 & 16701545 & 3.53301 & $\;n=$ 141 \\
$n=$ 142 & 352870 & 467180 & 0.755319 & 362880 & 17178735 & 2.99958 & $\;n=$ 142 \\
$n=$ 143 & 471900 & 477191 & 0.988912 & 482052 & 17666078 & 3.90202 & $\;n=$ 143 \\
$n=$ 144 & 328320 & 487344 & 0.673693 & 338615 & 18163717 & 2.68450 & $\;n=$ 144 \\
$n=$ 145 & 479080 & 497640 & 0.962704 & 489519 & 18671796 & 3.80147 & $\;n=$ 145 \\
$n=$ 146 & 383688 & 508080 & 0.755172 & 394272 & 19190460 & 2.99960 & $\;n=$ 146 \\
$n=$ 147 & 454818 & 518665 & 0.876901 & 465548 & 19719855 & 3.47039 & $\;n=$ 147 \\
$n=$ 148 & 399600 & 529396 & 0.754822 & 410477 & 20260128 & 2.99853 & $\;n=$ 148 \\
$n=$ 149 & 540274 & 540274 & 1.00000 & 551299 & 20811427 & 3.94704 & $\;n=$ 149 \\
$n=$ 150 & 357000 & 551300 & 0.647560 & 368174 & 21373901 & 2.58381 & $\;n=$ 150 \\
$n=$ 151 & 562475 & 562475 & 1.00000 & 573799 & 21947700 & 3.94773 & $\;n=$ 151 \\
$n=$ 152 & 432288 & 573800 & 0.753377 & 443763 & 22532975 & 2.99348 & $\;n=$ 152 \\
$n=$ 153 & 521424 & 585276 & 0.890903 & 533051 & 23129878 & 3.52604 & $\;n=$ 153 \\
$n=$ 154 & 438900 & 596904 & 0.735294 & 450680 & 23738562 & 2.92371 & $\;n=$ 154 \\
$n=$ 155 & 585900 & 608685 & 0.962567 & 597834 & 24359181 & 3.80408 & $\;n=$ 155 \\
$n=$ 156 & 415584 & 620620 & 0.669627 & 427673 & 24991890 & 2.66955 & $\;n=$ 156 \\
$n=$ 157 & 632710 & 632710 & 1.00000 & 644955 & 25636845 & 3.94970 & $\;n=$ 157 \\
$n=$ 158 & 486798 & 644956 & 0.754777 & 499200 & 26294203 & 2.99966 & $\;n=$ 158 \\
$n=$ 159 & 587028 & 657359 & 0.893010 & 599588 & 26964122 & 3.53561 & $\;n=$ 159 \\
$n=$ 160 & 486400 & 669920 & 0.726057 & 499119 & 27646761 & 2.88855 & $\;n=$ 160 \\
$n=$ 161 & 669438 & 682640 & 0.980660 & 682317 & 28342280 & 3.87594 & $\;n=$ 161 \\
$n=$ 162 & 468018 & 695520 & 0.672904 & 481058 & 29050840 & 2.68259 & $\;n=$ 162 \\
$n=$ 163 & 708561 & 708561 & 1.00000 & 721763 & 29772603 & 3.95153 & $\;n=$ 163 \\
$n=$ 164 & 544480 & 721764 & 0.754374 & 557845 & 30507732 & 2.99880 & $\;n=$ 164 \\
$n=$ 165 & 627000 & 735130 & 0.852910 & 640529 & 31256391 & 3.38130 & $\;n=$ 165 \\
$n=$ 166 & 564898 & 748660 & 0.754545 & 578592 & 32018745 & 2.99969 & $\;n=$ 166 \\
$n=$ 167 & 762355 & 762355 & 1.00000 & 776215 & 32794960 & 3.95268 & $\;n=$ 167 \\
$n=$ 168 & 512064 & 776216 & 0.659693 & 526091 & 33585203 & 2.63161 & $\;n=$ 168 \\
$n=$ 169 & 786526 & 790244 & 0.995295 & 800721 & 34389642 & 3.93496 & $\;n=$ 169 \\
$n=$ 170 & 582080 & 804440 & 0.723584 & 596444 & 35208446 & 2.87986 & $\;n=$ 170 \\
$n=$ 171 & 729486 & 818805 & 0.890915 & 744020 & 36041785 & 3.53000 & $\;n=$ 171 \\
$n=$ 172 & 628488 & 833340 & 0.754180 & 643193 & 36889830 & 2.99891 & $\;n=$ 172 \\
$n=$ 173 & 848046 & 848046 & 1.00000 & 862923 & 37752753 & 3.95430 & $\;n=$ 173 \\
$n=$ 174 & 579768 & 862924 & 0.671865 & 594818 & 38630727 & 2.67917 & $\;n=$ 174 \\
$n=$ 175 & 829500 & 877975 & 0.944788 & 844724 & 39523926 & 3.74018 & $\;n=$ 175 \\
$n=$ 176 & 668800 & 893200 & 0.748768 & 684199 & 40432525 & 2.97827 & $\;n=$ 176 \\
$n=$ 177 & 811014 & 908600 & 0.892597 & 826589 & 41356700 & 3.53767 & $\;n=$ 177 \\
$n=$ 178 & 697048 & 924176 & 0.754237 & 712800 & 42296628 & 2.99973 & $\;n=$ 178 \\
$n=$ 179 & 939929 & 939929 & 1.00000 & 955859 & 43252487 & 3.95581 & $\;n=$ 179 \\
$n=$ 180 & 617760 & 955860 & 0.646287 & 633869 & 44224456 & 2.57994 & $\;n=$ 180 \\
$n=$ 181 & 971970 & 971970 & 1.00000 & 988259 & 45212715 & 3.95630 & $\;n=$ 181 \\
$n=$ 182 & 727272 & 988260 & 0.735912 & 743742 & 46217445 & 2.92879 & $\;n=$ 182 \\
$n=$ 183 & 896700 & 1004731 & 0.892478 & 913352 & 47238828 & 3.53826 & $\;n=$ 183 \\
$n=$ 184 & 769120 & 1021384 & 0.753017 & 785955 & 48277047 & 2.99554 & $\;n=$ 184 \\
$n=$ 185 & 999000 & 1038220 & 0.962224 & 1016019 & 49332286 & 3.81015 & $\;n=$ 185 \\
$n=$ 186 & 708660 & 1055240 & 0.671563 & 725864 & 50404730 & 2.67853 & $\;n=$ 186 \\
$n=$ 187 & 1062160 & 1072445 & 0.990410 & 1079550 & 51494565 & 3.92033 & $\;n=$ 187 \\
$n=$ 188 & 821560 & 1089836 & 0.753838 & 839137 & 52601978 & 2.99908 & $\;n=$ 188 \\
$n=$ 189 & 969570 & 1107414 & 0.875526 & 987335 & 53727157 & 3.47322 & $\;n=$ 189 \\
$n=$ 190 & 813960 & 1125180 & 0.723404 & 831914 & 54870291 & 2.88068 & $\;n=$ 190 \\
$n=$ 191 & 1143135 & 1143135 & 1.00000 & 1161279 & 56031570 & 3.95856 & $\;n=$ 191 \\
$n=$ 192 & 780288 & 1161280 & 0.671921 & 798623 & 57211185 & 2.68017 & $\;n=$ 192 \\
$n=$ 193 & 1179616 & 1179616 & 1.00000 & 1198143 & 58409328 & 3.95898 & $\;n=$ 193 \\
$n=$ 194 & 903264 & 1198144 & 0.753886 & 921984 & 59626192 & 2.99977 & $\;n=$ 194 \\
$n=$ 195 & 1038960 & 1216865 & 0.853801 & 1057874 & 60861971 & 3.38940 & $\;n=$ 195 \\
$n=$ 196 & 913752 & 1235780 & 0.739413 & 932861 & 62116860 & 2.94350 & $\;n=$ 196 \\
$n=$ 197 & 1254890 & 1254890 & 1.00000 & 1274195 & 63391055 & 3.95981 & $\;n=$ 197 \\
$n=$ 198 & 849420 & 1274196 & 0.666632 & 868922 & 64684753 & 2.65977 & $\;n=$ 198 \\
$n=$ 199 & 1293699 & 1293699 & 1.00000 & 1313399 & 65998152 & 3.96021 & $\;n=$ 199 \\
$n=$ 200 & 952000 & 1313400 & 0.724836 & 971899 & 67331451 & 2.88691 & $\;n=$ 200 \\
$n=$ 201 & 1189518 & 1333300 & 0.892161 & 1209617 & 68684850 & 3.53983 & $\;n=$ 201 \\
$n=$ 202 & 1020100 & 1353400 & 0.753731 & 1040400 & 70058550 & 2.99979 & $\;n=$ 202 \\
$n=$ 203 & 1347108 & 1373701 & 0.980641 & 1367610 & 71452753 & 3.88543 & $\;n=$ 203 \\
$n=$ 204 & 933504 & 1394204 & 0.669561 & 954209 & 72867662 & 2.67140 & $\;n=$ 204 \\
$n=$ 205 & 1361200 & 1414910 & 0.962040 & 1382109 & 74303481 & 3.81318 & $\;n=$ 205 \\
$n=$ 206 & 1082118 & 1435820 & 0.753659 & 1103232 & 75760415 & 2.99980 & $\;n=$ 206 \\
$n=$ 207 & 1297890 & 1456935 & 0.890836 & 1319210 & 77238670 & 3.53549 & $\;n=$ 207 \\
$n=$ 208 & 1108224 & 1478256 & 0.749683 & 1129751 & 78738453 & 2.98441 & $\;n=$ 208 \\
$n=$ 209 & 1485990 & 1499784 & 0.990803 & 1507725 & 80259972 & 3.92617 & $\;n=$ 209 \\
$n=$ 210 & 962640 & 1521520 & 0.632683 & 984584 & 81803436 & 2.52755 & $\;n=$ 210 \\
$n=$ 211 & 1543465 & 1543465 & 1.00000 & 1565619 & 83369055 & 3.96245 & $\;n=$ 211 \\
$n=$ 212 & 1179568 & 1565620 & 0.753419 & 1201933 & 84957040 & 2.99928 & $\;n=$ 212 \\
$n=$ 213 & 1416450 & 1587986 & 0.891979 & 1439027 & 86567603 & 3.54073 & $\;n=$ 213 \\
$n=$ 214 & 1213594 & 1610564 & 0.753521 & 1236384 & 88200957 & 2.99981 & $\;n=$ 214 \\
$n=$ 215 & 1571220 & 1633355 & 0.961959 & 1594224 & 89857316 & 3.81447 & $\;n=$ 215 \\
$n=$ 216 & 1111968 & 1656360 & 0.671332 & 1135187 & 91536895 & 2.67871 & $\;n=$ 216 \\
$n=$ 217 & 1647030 & 1679580 & 0.980620 & 1670465 & 93239910 & 3.88772 & $\;n=$ 217 \\
$n=$ 218 & 1283148 & 1703016 & 0.753456 & 1306800 & 94966578 & 2.99982 & $\;n=$ 218 \\
$n=$ 219 & 1540008 & 1726669 & 0.891895 & 1563878 & 96717117 & 3.54114 & $\;n=$ 219 \\
$n=$ 220 & 1258400 & 1750540 & 0.718864 & 1282489 & 98491746 & 2.86468 & $\;n=$ 220 \\
$n=$ 221 & 1760928 & 1774630 & 0.992279 & 1785237 & 100290685 & 3.93394 & $\;n=$ 221 \\
$n=$ 222 & 1206792 & 1798940 & 0.670835 & 1231322 & 102114155 & 2.67694 & $\;n=$ 222 \\
$n=$ 223 & 1823471 & 1823471 & 1.00000 & 1848223 & 103962378 & 3.96445 & $\;n=$ 223 \\
$n=$ 224 & 1365504 & 1848224 & 0.738820 & 1390479 & 105835577 & 2.94294 & $\;n=$ 224 \\
$n=$ 225 & 1606500 & 1873200 & 0.857623 & 1631699 & 107733976 & 3.40777 & $\;n=$ 225 \\
$n=$ 226 & 1430128 & 1898400 & 0.753333 & 1455552 & 109657800 & 2.99983 & $\;n=$ 226 \\
$n=$ 227 & 1923825 & 1923825 & 1.00000 & 1949475 & 111607275 & 3.96507 & $\;n=$ 227 \\
$n=$ 228 & 1305072 & 1949476 & 0.669448 & 1330949 & 113582628 & 2.67168 & $\;n=$ 228 \\
$n=$ 229 & 1975354 & 1975354 & 1.00000 & 2001459 & 115584087 & 3.96537 & $\;n=$ 229 \\
$n=$ 230 & 1447160 & 2001460 & 0.723052 & 1473494 & 117611881 & 2.88154 & $\;n=$ 230 \\
$n=$ 231 & 1760220 & 2027795 & 0.868046 & 1786784 & 119666240 & 3.44915 & $\;n=$ 231 \\
$n=$ 232 & 1546048 & 2054360 & 0.752569 & 1572843 & 121747395 & 2.99719 & $\;n=$ 232 \\
$n=$ 233 & 2081156 & 2081156 & 1.00000 & 2108183 & 123855578 & 3.96596 & $\;n=$ 233 \\
$n=$ 234 & 1406808 & 2108184 & 0.667308 & 1434068 & 125991022 & 2.66346 & $\;n=$ 234 \\
$n=$ 235 & 2053900 & 2135445 & 0.961814 & 2081394 & 128153961 & 3.81672 & $\;n=$ 235 \\
$n=$ 236 & 1628872 & 2162940 & 0.753082 & 1656601 & 130344630 & 2.99942 & $\;n=$ 236 \\
$n=$ 237 & 1953354 & 2190670 & 0.891670 & 1981319 & 132563265 & 3.54225 & $\;n=$ 237 \\
$n=$ 238 & 1633632 & 2218636 & 0.736323 & 1661834 & 134810103 & 2.93388 & $\;n=$ 238 \\
$n=$ 239 & 2246839 & 2246839 & 1.00000 & 2275279 & 137085382 & 3.96681 & $\;n=$ 239 \\
$n=$ 240 & 1466880 & 2275280 & 0.644703 & 1495559 & 139389341 & 2.57505 & $\;n=$ 240 \\
$n=$ 241 & 2303960 & 2303960 & 1.00000 & 2332879 & 141722220 & 3.96708 & $\;n=$ 241 \\
$n=$ 242 & 1743610 & 2332880 & 0.747407 & 1772770 & 144084260 & 2.97750 & $\;n=$ 242 \\
$n=$ 243 & 2106081 & 2362041 & 0.891636 & 2135483 & 146475703 & 3.54272 & $\;n=$ 243 \\
$n=$ 244 & 1800720 & 2391444 & 0.752984 & 1830365 & 148896792 & 2.99945 & $\;n=$ 244 \\
$n=$ 245 & 2284380 & 2421090 & 0.943534 & 2314269 & 151347771 & 3.74631 & $\;n=$ 245 \\
$n=$ 246 & 1643280 & 2450980 & 0.670458 & 1673414 & 153828885 & 2.67609 & $\;n=$ 246 \\
$n=$ 247 & 2463084 & 2481115 & 0.992733 & 2493464 & 156340380 & 3.93939 & $\;n=$ 247 \\
$n=$ 248 & 1889760 & 2511496 & 0.752444 & 1920387 & 158882503 & 2.99754 & $\;n=$ 248 \\
$n=$ 249 & 2266398 & 2542124 & 0.891537 & 2297273 & 161455502 & 3.54290 & $\;n=$ 249 \\
$n=$ 250 & 1862500 & 2573000 & 0.723863 & 1893624 & 164059626 & 2.88557 & $\;n=$ 250 \\
$n=$ 251 & 2604125 & 2604125 & 1.00000 & 2635499 & 166695125 & 3.96838 & $\;n=$ 251 \\
$n=$ 252 & 1732752 & 2635500 & 0.657466 & 1764377 & 169362250 & 2.62528 & $\;n=$ 252 \\
$n=$ 253 & 2643850 & 2667126 & 0.991273 & 2675727 & 172061253 & 3.93441 & $\;n=$ 253 \\
$n=$ 254 & 2032254 & 2699004 & 0.752964 & 2064384 & 174792387 & 2.99986 & $\;n=$ 254 \\
$n=$ 255 & 2333760 & 2731135 & 0.854502 & 2366144 & 177555906 & 3.39818 & $\;n=$ 255 \\
$n=$ 256 & 2080768 & 2763520 & 0.752941 & 2113407 & 180352065 & 2.99987 & $\;n=$ 256
\end{longtable}

\end{footnotesize}

%%%%%%%%%%%%%%%%%%%%%%%%%%%%%%%%%%%%%%%%%%%%%%
%\renewcommand{\refname}{\textsc{Список литературы}}
\newpage
\addcontentsline{toc}{section}{References}
\bibliographystyle{siam}
%\vskip 0.6cm

\end{document}